\documentclass{amsart}

\usepackage{amsmath,
            amssymb,
            bibentry,
            enumitem,
            graphicx,
            mathrsfs,
            tabu,
            tikz,
            xypic}
\usepackage[comma,numbers,sort,square]{natbib}
\usepackage[colorlinks=true,
            linktocpage=true,
            linkcolor=magenta,
            citecolor=magenta,
            urlcolor=magenta]{hyperref}
\PassOptionsToPackage{hyphens}{url}
\usetikzlibrary{arrows,shapes,snakes}

\definecolor{grey}{rgb}{0.95,0.95,0.95}
\definecolor{green}{rgb}{0.2,0.6,0.4}

\newtheorem{theorem}{Theorem}[section]
\newtheorem{proposition}[theorem]{Proposition}
\newtheorem{lemma}[theorem]{Lemma}
\newtheorem{corollary}[theorem]{Corollary}
\theoremstyle{definition}
\newtheorem{definition}[theorem]{Definition}

\newtheorem{question}[theorem]{Question}

\newtheorem*{note*}{Note}

\newtheoremstyle{principle}{}{}{\itshape}{}{\bfseries}{.}{.5em}{\thmnote{#3}#1}
\theoremstyle{principle}
\newtheorem*{principle}{}



%


\newcommand{\Psf}{\mathsf{P}}
\newcommand{\Qsf}{\mathsf{Q}}

\newcommand{\converges}{\mathord{\downarrow}}
\newcommand{\diverges}{\mathord{\uparrow}}

\newcommand{\Pb}{\mathbb{P}}

\newcommand{\dbf}{\mathbf{d}}

\newcommand{\Ccal}{\mathcal{C}}
\newcommand{\Dcal}{\mathcal{D}}

\newcommand{\Fcal}{\mathcal{F}}
\newcommand{\Gcal}{\mathcal{G}}

\newcommand{\Mcal}{\mathcal{M}}

\newcommand{\Rcal}{\mathcal{R}}
\newcommand{\Scal}{\mathcal{S}}

\newcommand{\uh}{{\upharpoonright}}

\renewcommand{\setminus}{\smallsetminus}




\newcommand{\s}[1]{\ensuremath{\sf{#1}}}


\DeclareMathOperator{\wkl}{\s{WKL}_0}

\DeclareMathOperator{\srt}{\s{SRT}}

\DeclareMathOperator{\coh}{\s{COH}}

\DeclareMathOperator{\semo}{\s{SEM}}

\DeclareMathOperator{\dhyp}{\s{DHYP}}
\DeclareMathOperator{\dhyt}{\s{DHYT}}

\usetikzlibrary{shapes,arrows}
\usetikzlibrary{decorations.markings}
\definecolor{lightblue}{HTML}{e6e6e6}
\definecolor{lightred}{HTML}{eca6a6}
\definecolor{lightgreen}{RGB}{164,244,140}

\newtheoremstyle{custom}
  {10pt}
  {10pt}
  {\normalfont}
  {}
  {\bfseries}
  {}
  { }
  {}

\theoremstyle{custom}

\usepackage{xcolor}	
\usepackage{soul}



\DeclareMathOperator{\res}{\upharpoonright}
\newcommand{\ran}{\operatorname{ran}}

\newcommand{\seq}[1]{\langle #1 \rangle}


\newcommand{\forces}{\Vdash}




\newcommand{\RCA}{\mathsf{RCA}}
\newcommand{\ACA}{\mathsf{ACA}}
\newcommand{\WKL}{\mathsf{WKL}}


\newcommand{\RT}{\mathsf{RT}}
\newcommand{\RCOH}{\mathsf{RCOH}}
\newcommand{\COH}{\mathsf{COH}}

\newcommand{\SRT}{\mathsf{SRT}}

\newcommand{\RWKLp}{\mathsf{RWKL}'}

\newcommand{\JI}{\mathsf{JI}}

\newcommand{\D}{\mathsf{D}}

\newcommand{\cequiv}{\equiv_{\text{\upshape c}}}
\newcommand{\scequiv}{\equiv_{\text{\upshape sc}}}
\newcommand{\uequiv}{\equiv_{\text{\upshape W}}}
\newcommand{\suequiv}{\equiv_{\text{\upshape sW}}}
\newcommand{\cred}{\leq_{\text{\upshape c}}}
\newcommand{\ncred}{\nleq_{\text{\upshape c}}}
\newcommand{\scred}{\leq_{\text{\upshape sc}}}
\newcommand{\nscred}{\nleq_{\text{\upshape sc}}}
\newcommand{\ured}{\leq_{\text{\upshape W}}}
\newcommand{\nured}{\nleq_{\text{\upshape W}}}
\newcommand{\sured}{\leq_{\text{\upshape sW}}}

\newcommand{\Tred}{\leq_{\text{\upshape T}}}
\newcommand{\nTred}{\nleq_{\text{\upshape T}}}


\renewcommand{\phi}{\varphi}
\renewcommand{\leq}{\leqslant}
\renewcommand{\geq}{\geqslant}
\renewcommand{\nleq}{\nleqslant}

\begin{document}

\title{Some results concerning the $\mathsf{SRT}^2_2$ vs.\ $\mathsf{COH}$ problem}

\author[Cholak]{Peter A.\ Cholak}
\address{Department of Mathematics\\
University of Notre Dame\\
255 Hurley Building, Notre Dame, Indiana 46556-4618 U.S.A.}
\email{ Peter.Cholak.1@nd.edu}

\author[Dzhafarov]{Damir D.\ Dzhafarov}
\address{Department of Mathematics\\
University of Connecticut\\
341 Mansfield Road\\ Storrs, Connecticut 06269-1009 U.S.A.}
\email{damir@math.uconn.edu}

\author[Hirschfeldt]{Denis R.\ Hirschfeldt}
\address{Department of Mathematics\\
University of Chicago\\
5734 S. University Ave., Chicago, Illinois 60637-1546 U.S.A.}
\email{drh@math.uchicago.edu}

\author[Patey]{Ludovic Patey}
\address{Institut Camille Jordan\\
Universit\'{e} Claude Bernard Lyon 1\\
43 boulevard du 11 novembre 1918, F-69622 Villeurbanne Cedex, France}
\email{ludovic.patey@computability.fr}

\begin{abstract}
	The $\SRT^2_2$ vs.\ $\COH$ problem is a central problem in computable combinatorics and reverse mathematics, asking whether every Turing ideal that satisfies the principle $\SRT^2_2$ also satisfies the principle $\COH$. This paper is a contribution towards further developing some of the main techniques involved in attacking this problem. We study several principles related to each of $\SRT^2_2$ and $\COH$, and prove results that highlight the limits of our current understanding, but also point to new directions ripe for further exploration.
\end{abstract}

\thanks{Cholak was partially supported by a grant from the Simons Foundation (\#315283). Dzhafarov was supported by grant DMS-1400267 from the National Science Foundation of the United States and a Collaboration Grant for Mathematicians from the Simons Foundation. Hirschfeldt was partially supported by grant DMS-1600543 from the National Science Foundation of the United States. Patey was partially supported by grant ANR ``ACTC'' \#ANR-19-CE48-0012-01. Cholak, Dzhafarov, and Hirschfeldt were also partially supported by a Focused Research Group grant from the National Science Foundation of the United States, DMS-1854136, DMS-1854355, and DMS-1854279, respectively. All authors thank the Mathematisches Forschungsinstitut Oberwolfach, for hosting them as part of its Research in Pairs Program during the fall of 2016 and the winter of 2018, during which the work presented here was conducted. Additional thanks go to the two anonymous referees for their valuable comments and suggestions.}

\maketitle

\section{Introduction}

One of the most fruitful programs of research in computability theory over the past few decades has been the investigation of the logical strength of combinatorial principles, particularly Ramsey's theorem and its many relatives.
Ramsey's theorem is, of course, a far-reaching result, broadly asserting that in any configuration of objects, some amount of order is necessary. Understanding this order has been the objective of much research in combinatorics and logic. In computability theory, and even more so reverse mathematics, it has spawned a long and productive line of research. See Hirschfeldt~\cite[Chapter 6]{Hirschfeldt-2014} for an introduction.

For many years, a central problem surrounding this analysis has been to clarify the relationship between two important variants of Ramsey's theorem for pairs; specifically, whether the
Cohesiveness principle ($\COH$) is implied by the stable Ramsey's theorem for pairs ($\SRT^2_2$) over the weak fragment $\RCA_0$ of second-order arithmetic. This question was finally answered in 2014 by Chong, Slaman, and Yang~\cite{CSY-2014}, who gave a negative answer, but remarkably, using a nonstandard model for the separation of these principles. What has come to be called the \emph{$\SRT^2_2$ vs.\ $\COH$ problem} is the question of what happens in $\omega$-models (models with standard first-order part), and this question remains open. Over the past several years, work on the effective content of variants of Ramsey's theorem, including towards a solution of the above problem, has driven much of the progress in computable combinatorics and the reverse mathematics of combinatorial principles. It has also been an important impetus for the fruitful and growing intersection of these subjects with computable analysis (see, e.g.,~\cite{ADSS-2017, BR-2017, DDHMS-2016, Dzhafarov-2016, DPSW-2017, HJ-2016, HM-TA, HM-2017, MP-TA, Nichols-TA, Patey-2016c, Patey-2016}).


In this paper, we study several aspects of this problem. We assume familiarity with computability theory and reverse mathematics, and refer the reader to Soare~\cite{Soare-2016} and Simpson~\cite{Simpson-2009}, respectively, for background. We also refer to Brattka, Gherardi, and Pauly~\cite{BGP-TA} for a survey of Weihrauch reducibility and computable analysis, though we include a brief summary of the most relevant concepts below.

\begin{note*}
	Since the submission of this article, Monin and Patey \cite{MP-TA2} have announced a solution to the $\SRT^2_2$ vs.\ $\COH$ problem, exhibiting an $\omega$-model in which $\SRT^2_2$ holds but $\COH$ fails. Their proof proceeds by entirely different methods than those explored within this paper. Hence, it does not supersede any of the results below, which we feel are of independent interest concerning the degree-theoretic content of the $\SRT^2_2$ and $\COH$ problems. In addition, Monin and Patey's result still leaves open Question \ref{Q:omniRT}, which may be regarded as the purely combinatorial variant of the $\SRT^2_2$ vs.\ $\COH$ problem, as well as the open questions in Section \ref{S:questions}.
\end{note*}

Throughout, we will be dealing with $\Pi^1_2$ statements of second-order arithmetic, which are examples of the more general concept of a \emph{problem}, as defined below.

\begin{definition}
\
	\begin{enumerate}
		\item A \emph{problem} $\mathsf{P}$ is a subset of $2^\omega \times 2^\omega$.
		\item Each $X \in 2^\omega$ for which there is a $Y \in 2^\omega$ such that $\mathsf{P}(X,Y)$ holds is an \emph{instance of $\mathsf{P}$}.
		\item Each $Y$ such that $\mathsf{P}(X,Y)$ holds is a \emph{solution to $X$ as an instance of $\mathsf{P}$}.
	\end{enumerate}
\end{definition}

\noindent When no confusion can arise, we will speak just of instances and solutions, without explicitly referencing the problem. When necessary, we may call an instance of $\mathsf{P}$ a \emph{$\mathsf{P}$-instance} for short, and a solution to some instance of $\mathsf{P}$ a \emph{$\mathsf{P}$-solution}.

Throughout, we follow the standard practice of coding mathematical objects and structures by numbers and sets of numbers. This makes our definition of problem very broad, since it permits us to deal with any instances and solutions that admit some kind of countable presentation. For all the objects we consider here, these codings will be obvious and/or well understood, so we will do so implicitly and informally. But we refer the reader to~\cite[Remarks 1.4 and 1.5]{HJ-2016} for a more thorough discussion of this issue, along with some explicit examples.

All of the $\Pi^1_2$ principles we will look at can be naturally put into the syntactic form
\[
	(\forall X)[\phi(X) \to (\exists Y)[\theta(X,Y)]],
\]
where $\phi$ and $\theta$ are arithmetical formulas. We can then view such a principle as a problem in the above sense, with the instances being all the $X \in 2^\omega$ such that $\phi(X)$ holds, and the solutions to any such $X$ being all the $Y \in 2^\omega$ such that $\psi(X,Y)$ holds. We shall make this identification without further mention in the sequel.

To compare problems, we will employ the following reductions.

\begin{definition}\label{D:reductions}
	Let $\mathsf{P}$ and $\mathsf{Q}$ be problems.
	\begin{enumerate}
		\item $\mathsf{P}$ is \emph{computably reducible} to $\mathsf{Q}$, written $\mathsf{P} \cred \mathsf{Q}$, if every instance $X$ of $\mathsf{P}$ computes an instance $\widehat{X}$ of $\mathsf{Q}$, such that for every solution $\widehat{Y}$ to $\widehat{X}$, we have that $X \oplus \widehat{Y}$ computes a solution $Y$ to $X$.
		\item $\mathsf{P}$ is \emph{strongly computably reducible} to $\mathsf{Q}$, written $\mathsf{P} \scred \mathsf{Q}$, if every instance $X$ of $\mathsf{P}$ computes an instance $\widehat{X}$ of $\mathsf{Q}$, such that every solution $\widehat{Y}$ to $\widehat{X}$ computes a solution $Y$ to $X$.
		\item $\mathsf{P}$ is \emph{Weihrauch reducible} to $\mathsf{Q}$, written $\mathsf{P} \ured \mathsf{Q}$, if there exist Turing functionals $\Phi$ and $\Psi$ such that for every instance $X$ of $\mathsf{P}$, we have that $\Phi^X$ is an instance of $\mathsf{Q}$, and for every solution $\widehat{Y}$ to $\Phi^X$ we have that $\Psi^{X \oplus \widehat{Y}}$ is a solution to $X$.
		\item $\mathsf{P}$ is \emph{strongly Weihrauch reducible} to $\mathsf{Q}$, written $\mathsf{P} \sured \mathsf{Q}$, if there exist Turing functionals $\Phi$ and $\Psi$ such that for every instance $X$ of $\mathsf{P}$, we have that $\Phi^X$ is an instance of $\mathsf{Q}$, and for every solution $\widehat{Y}$ to $\Phi^X$ we have that $\Psi^{\widehat{Y}}$ is a solution to $X$.
	\end{enumerate}
\end{definition}

\noindent We say $\mathsf{P}$ and $\mathsf{Q}$ are \emph{computably equivalent}, and write $\mathsf{P} \cequiv \mathsf{Q}$, if $\mathsf{P} \cred \mathsf{Q}$ and $\mathsf{Q} \cred \mathsf{P}$. We analogously define \emph{strong computable equivalence}, \emph{Weihrauch equivalence}, and \emph{strong Weihrauch equivalence}, denoted by $\scequiv$, $\uequiv$, and $\suequiv$, respectively.

The relationships between these reductions are easy to see, and are summarized in Figure~\ref{F:reductionsrelations}. Weihrauch reducibility was introduced by Weihrauch~\cite{Weihrauch-1992}, and computable reducibility by Dzhafarov~\cite{Dzhafarov-2015}. The connection with reverse mathematics comes from the fact that all of these reducibilities are stronger than implication over $\omega$-models for $\Pi^1_2$ principles. That is, if $\mathsf{P}$ and $\mathsf{Q}$ are $\Pi^1_2$ principles and (when viewed as problems) $\mathsf{P}$ is reducible to $\mathsf{Q}$ in any of the senses above, then every $\omega$-model of $\mathsf{Q}$ also satisfies $\mathsf{P}$. And while implication over $\omega$-models is a strictly more general notion, it is a well-known empirical fact that most such implications found in the literature are due to one of the stronger reducibilities above. For a more thorough discussion of this phenomenon, see Hirschfeldt and Jockusch~\cite[Section 4.1]{HJ-2016}.

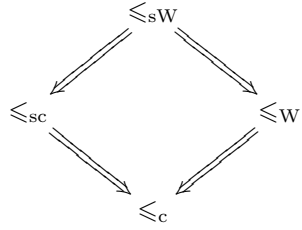
\begin{figure}\label{F:reductionsrelations}
\[
\xymatrix{
& \sured \ar@2[dl] \ar@2[dr] \\
\scred \ar@2[dr] & & \ured \ar@2[dl] \\
& \cred
}
\]
\caption[]{Relations between notions of reduction. An arrow from one reducibility to another means that whenever $\mathsf{Q}$ is reducible to $\mathsf{P}$ according to the first then it is also reducible according to the second. In general, no relations hold other than the ones shown.}
\end{figure}

To state the $\SRT^2_2$ vs.\ $\COH$ problem, we now review some standard definitions from Ramsey theory.

\begin{definition}
	Fix a set $X \subseteq \omega$, and integers $n,k \geq 1$.
	\begin{enumerate}
		\item $[X]^n$ denotes the set $\{ \seq{x_0,\ldots,x_{n-1}} \in X^n : x_0 < \cdots < x_{n-1}\}$.
		\item A \emph{$k$-coloring of $[X]^n$} is a map $c : [X]^n \to \{0,\ldots,k-1\}$.
		\item A set $Y \subseteq X$ is a \emph{homogeneous set} for $c$ if $c \res [Y]^n$ is constant.
	\end{enumerate}
\end{definition}

\noindent As the number of colors typically will not matter for our purposes, we shall usually speak only of \emph{colorings}, rather than explicitly about \emph{$k$-colorings} for a given $k$. Except in the statements of definitions, we will usually be working with $k = 2$ anyway. We abbreviate colorings $c : [X]^n \to \{0,\ldots,k-1\}$ by $c : [X]^n \to k$, as usual, and for $\seq{x_0,\ldots,x_{n-1}} \in [X]^n$ we write $c(x_0,\ldots,x_{n-1})$ in place of $c(\seq{x_0,\ldots,x_{n-1}})$.

\begin{principle}[Ramsey's theorem ($\RT^n_k$)]
	Every coloring $c : [\omega]^n \to k$ has an infinite homogeneous set.
\end{principle}

Of particular interest in computability has been Ramsey's theorem for pairs, i.e., $\RT^2_2$. The principles $\SRT^2_2$ and $\COH$ come from a prominent approach, pioneered by Cholak, Jockusch, and Slaman~\cite{CJS-2001}, of splitting combinatorial principles into a stable and a cohesive half.

\begin{definition}
\
	\begin{enumerate}
		\item A coloring $c : [\omega]^2 \to k$ is \emph{stable} if $\lim_y c(x,y)$ exists for every $x \in \omega$.
		\item An infinite set $L \subseteq \omega$ is \emph{limit-homogeneous} for such a $c$ if there is an $i < k$ such that $\lim_y c(x,y) = i$ for all $x \in L$.
	\end{enumerate}
\end{definition}

\noindent The stable form of $\RT^2_k$ takes two natural forms.

\begin{principle}[Stable Ramsey's theorem for pairs ($\SRT^2_k$)]
	Every stable $c : [\omega]^2 \to k$ has an infinite homogeneous set.
\end{principle}

\begin{principle}[$\Delta^0_2$ subset principle ($\D^2_k$)]
	Every stable $c : [\omega]^2 \to k$ has an infinite limit-homogeneous set.
\end{principle}

\noindent The name of the second principle derives from the observation that, by the limit lemma, computing an infinite limit-homogeneous set for a given computable stable 2-coloring is exactly the same as computing an infinite subset of a given $\Delta^0_2$ set or its complement. It is well-known that $\SRT^2_2$ and $\D^2_2$ are equivalent over $\RCA_0$. This equivalence is easy to see for $\omega$-models (see, e.g.,~\cite[Lemma 3.5]{CJS-2001}, which in fact gives a computable equivalence), but requires a delicate argument, due to Chong, Lempp, and Yang~\cite{CLY-2010}, to formalize with limited induction. Dzhafarov~\cite[Corollary 3.3 and Corollary 3.6]{Dzhafarov-2016} showed that $\SRT^2_2 \nured \D^2_2$ and $\SRT^2_2 \nscred \D^2_2$. As discussed further below, $\D^2_2$ is just a less effective version of $\RT^1_2$. In practice, this often makes $\D^2_2$ easier to work with than $\SRT^2_2$.

For sets $X$ and $Y$, let $X \subseteq^* Y$ denote that there is a finite set $F$ such that $X \setminus F \subseteq Y$.


\begin{definition}
	Let $\vec{R} = \seq{R_0, R_1, \dots}$ be a sequence of sets. A set $C$ is \emph{cohesive} for $\vec{R}$ if for every $n \in \omega$, either $C \subseteq^{*} R_n$ or $C \subseteq^{*} \overline{R}_n$.
\end{definition}

\begin{principle}[Cohesiveness principle ($\COH$)]
	Every sequence of sets admits an infinite cohesive set.	
\end{principle}

The relevant fact for us, due to Cholak, Jockusch, and Slaman~\cite[Lemma 7.11]{CJS-2001}, with the use of $\Sigma^0_2$-induction later eliminated by Mileti~\cite[Claim A.1.3]{Mileti-2004} and Jockusch and Lempp (unpublished), is that $\RT^2_2$ is equivalent over $\RCA_0$ to the conjunction $\SRT^2_2 + \COH$. Each of $\SRT^2_2$ and $\COH$ is combinatorially simpler than $\RT^2_2$ in a number of ways. Part of this simplicity comes from the fact that both principles can be viewed in terms of the more elementary principle $\RT^1_2$. For $\SRT^2_2$, in the form $\D^2_2$, this is because finding a limit-homogeneous set for a stable coloring $c : [\omega]^2 \to 2$ is the same as finding an infinite homogeneous set for the coloring $d : \omega \to 2$ defined by $d(x) = \lim_y c(x,y)$. Note, by the way, that $d$ is $c'$-computable, so $\D^2_2$ can be characterized as the jump of $\RT^1_2$. For $\COH$, the connection with $\RT^1_2$ follows by a result of Jockusch and Stephan~\cite[Theorem 2.1]{JS-1993}, who characterized the degrees containing an infinite cohesive set for every computable family of sets as precisely those degrees $\mathbf{a}$ satisfying $\mathbf{a}' \gg \mathbf{0}'$. Relativizing this result, it is easy to see that $\COH$ is computably equivalent to the assertion that given a sequence $\seq{d_0, d_1, \ldots}$ of colorings $\omega \to 2$, there exists a sequence $\seq{H_0, H_1, \ldots}$ of infinite sets such that each $H_n$ is, up to finite error, homogeneous for $d_n$. In the parlance of computable analysis, this says that $\COH$ is computably equivalent to the parallelization of the principle $(\RT^1_2)^{\mathrm{fe}}$ asserting that for every coloring of singletons there is an infinite set that is homogeneous modulo finitely many elements.

We can now formally state the main problem we are interested in.

\begin{principle}[The $\SRT^2_2$ vs.\ $\COH$ problem]
	Does every $\omega$-model of $\SRT^2_2$ satisfy $\COH$?
\end{principle}

\noindent Conventional wisdom suggests the answer ought to be negative, since implications between relatively straightforward combinatorial principles are usually quite elementary. The only possible difficulties one expects to encounter are induction issues, but these are precisely the ones that are absent when working over $\omega$-models. On the other hand, the continued resistance of this problem to a separation, in spite of a string of recent advances that did confirm other long-conjectured non-implications (e.g., Liu~\cite{Liu-2012}, and Lerman, Solomon, and Towsner~\cite{LST-2013}), means we should probably keep an open mind.

Our paper is a contribution to the study of this problem. In Section~\ref{S:weakening} we introduce a weaker form of $\COH$, and show it to be a combinatorial consequence of $\SRT^2_2$. This is a step towards resolving a question of Patey~\cite[Question 2.10]{Patey-2016d}, as well as  the longstanding question of whether $\COH$ is computably reducible to $\SRT^2_2$ (see~\cite[Question 5.3]{HJ-2016}). In Sections~\ref{S:locks} and~\ref{S:symmetric}, we prove results about the complexities of instances of $\COH$ and solutions to $\SRT^2_2$, respectively. The aim is to identify the precise features of cohesiveness and homogeneity that might be responsible for a separation or implication, as the case may be. Section~\ref{S:symmetric} also presents a new method of constructing effective solutions to $\SRT^2_2$/$\D^2_2$ that we hope will find further applications. In Section \ref{S:hyperimmunity}, we study variants of hyperimmunity, to better understand the class of instances of $\SRT^2_2$ having solutions that do not compute cohesive sets. Finally, in Section~\ref{S:questions}, we lay out some additional questions and directions for future research related to the $\SRT^2_2$ vs.\ $\COH$ problem.

\section{Weakening COH}\label{S:weakening}

One way of attacking the $\SRT^2_2$ vs.\ $\COH$ problem has been by showing that $\COH$ is at least not reducible to $\SRT^2_2$ in a typical way, i.e., via any of the notions in Definition~\ref{D:reductions}. Such results lend credence to a negative answer, since a full $\omega$-model separation of $\COH$ from $\SRT^2_2$ would in particular yield all such non-reductions. Recent examples along these lines include work by Dzhafarov~\cite{Dzhafarov-2015, Dzhafarov-2016}, Patey~\cite{Patey-2016}, and Dzhafarov, Patey, Solomon, and Westrick~\cite{DPSW-2017}, establishing, among other results, that $\COH \nured \SRT^2_2$ and $\COH \nscred \SRT^2_2$. As already remarked, it remains open whether $\COH \cred \SRT^2_2$.

In showing that some principle $\mathsf{P}$ is not, say, computably or Weihrauch reducible to some other principle $\mathsf{Q}$, we must exhibit an instance $X$ of $\mathsf{P}$, and for each instance of $\mathsf{Q}$ \emph{computable from $X$}, we must exhibit one solution against which to diagonalize. But in some cases, we can in fact do this for \emph{all} instances of $\mathsf{Q}$, whether computable from $X$ or not. The first results along these lines were obtained by Hirschfeldt and Jockusch~\cite[proofs of Lemma 3.2 and Theorem 3.3]{HJ-2016} and Patey~\cite[Theorem 3.2]{Patey-2016}. Conversely, even when we know that $\mathsf{P}$ is not reducible to $\mathsf{Q}$ according to any of the notions in Definition~\ref{D:reductions}, it may still be that $\mathsf{P}$ is reducible to $\mathsf{Q}$ in this stronger sense, where the instances of $\mathsf{Q}$ are allowed to be arbitrary. Intuitively, we can think of $\mathsf{P}$ as being a \emph{combinatorial consequence} of $\mathsf{Q}$. This notion was first isolated and studied by Monin and Patey~\cite{MP-TA} under the name of \emph{omniscient computable reducibility}.

\begin{definition}[\cite{MP-TA}, Section 1.1]
	Let $\mathsf{P}$ and $\mathsf{Q}$ be problems. Then $\mathsf{P}$ is \emph{omnisciently computably reducible} to $\mathsf{Q}$ if for every instance $X$ of $\mathsf{P}$ there exists an instance $\widehat{X}$ of $\mathsf{Q}$, such that for every solution $\widehat{Y}$ to $\widehat{X}$ we have that $X \oplus \widehat{Y}$ computes a solution $Y$ to $X$.
\end{definition}

We start with the following relatively straightforward result, which nicely illustrates the power of this reducibility. As we will see, this is also a very insightful example for studying the $\SRT^2_2$ vs.\ $\COH$ problem. For notational convenience, given any set $R$ we write $R^0$ for $R$ and $R^1$ for $\overline{R}$. Given a family of sets $\vec{R} = \seq{R_0,R_1,\ldots}$ and a finite string $\sigma \in 2^{<\omega}$, we also write
\[
	R^\sigma = \bigcap_{n < |\sigma|} R^{\sigma(n)}_n.
\]

\begin{proposition}\label{P:omniCOHtoSRT}
	$\COH$ is omnisciently computably reducible to $\SRT^2_2$.
\end{proposition}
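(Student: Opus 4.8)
The plan is to use omniscience to its fullest: since the $\SRT^2_2$-instance may be built from all of $\vec{R}$, I would first precompute the information that $\vec{R}$ by itself cannot produce. Let $T = \{\sigma \in 2^{<\omega} : R^\sigma \text{ is infinite}\}$. Since $R^\sigma = R^{\sigma 0} \cup R^{\sigma 1}$, the set $T$ is an infinite, finitely branching tree, so by König's lemma it has an infinite path $f$; fix one. Then $S_m := R^{f \res m}$ is infinite for every $m$ and $S_0 \supseteq S_1 \supseteq \cdots$, and any infinite $C$ with $C \subseteq^* S_m$ for all $m$ is cohesive for $\vec{R}$. Since $\COH$ holds, fix such a $C = \{c_0 < c_1 < \cdots\}$, with $c_m \in S_m$. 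Now $T$, $f$, and $C$ are all available.

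It is worth noting first why the most obvious attempt fails, since this forces the shape of the construction. One would like an $\SRT^2_2$-instance all of whose solutions are themselves cohesive; but a stable coloring whose limit-homogeneous sets were all cohesive would, in effect, split $\omega$ into two cohesive sets (each color class, if infinite, would have to be cohesive), and this is impossible for many $\vec{R}$ — e.g., whenever the extendibility tree forces cohesive sets to be genuine diagonal intersections, as it does when $\vec{R}$ enumerates all computable sets. So the reduction must exploit exactly the feature that separates $\SRT^2_2$ from $\D^2_2$: a solution must be genuinely \emph{homogeneous}, not just limit-homogeneous, which lets one impose further constraints on pairs within a single color class — constraints that, by stability, may forbid each point only finitely many larger partners.

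The construction I would then carry out: let $\widehat{c}\colon [\omega]^2 \to 2$ be the stable coloring whose color classes are $C$ and $\overline{C}$; on pairs inside $C$ it is trivial, so every homogeneous set of that color is an infinite subset of $C$ and hence cohesive. On pairs inside $\overline{C}$ it carries a pattern — built from the nested sequence $S_m$ together with a (non-computably chosen) fast-growing threshold function — engineered so that any infinite homogeneous set $H \subseteq \overline{C}$ is forced to agree with $f$ on ever-longer initial segments, and hence $\vec{R} \oplus H$ can reconstruct a cohesive set for $\vec{R}$. One then verifies three things: $\widehat{c}$ is stable (the pattern puts only finitely many off-color pairs above each point, thanks to the threshold); $\widehat{c}$ is a legitimate $\SRT^2_2$-instance (it has infinite homogeneous sets); and every infinite homogeneous set $H$ satisfies that $\vec{R} \oplus H$ computes an infinite cohesive set for $\vec{R}$.

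The step I expect to be the main obstacle is the design of the pattern on $\overline{C}$. It must be \emph{thin} — only finitely many forbidden pairs above each point, or $\widehat{c}$ fails to be stable — and yet \emph{inescapable}, in the sense that a homogeneous set cannot simply jump past the thresholds, avoid all the constraints, and remain uninformative (in the extreme, computable, so that $\vec{R}\oplus H$ computes nothing new). Making these two demands compatible, using the nested structure of the $S_m$ and the latitude to choose the thresholds of arbitrarily high complexity, is where the real work lies.
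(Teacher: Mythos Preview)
Your proposal correctly identifies the crucial mechanism --- the reduction must exploit the gap between homogeneous and limit-homogeneous sets --- but it stops short of a proof: the ``pattern on $\overline{C}$'' is never specified. You describe its desiderata (thin enough for stability, inescapable enough to encode information) but leave the construction open, and that construction is exactly the content of the argument.

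The paper's proof is simpler and more direct than your plan, and in particular shows that precomputing a path $f$ or a cohesive set $C$ is unnecessary. The coloring has \emph{constant} limit color $1$: set $c(x,y) = 0$ if some $\sigma \in 2^x$ has $R^\sigma$ finite but containing an element $> y$, and $c(x,y) = 1$ otherwise. For each $x$ there is then a threshold $m_x$ (a bound past all elements of all finite $R^\sigma$ with $|\sigma| \leq x$) beyond which $c(x,\cdot) \equiv 1$; so $c$ is stable with limit $1$ everywhere, every infinite homogeneous set $H = \{h_0 < h_1 < \cdots\}$ has color $1$, and homogeneity forces $h_{x+1} \geq m_{h_x} \geq m_x$. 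This sparseness is exactly what lets $\vec{R} \oplus H$ navigate the tree of infinite $R^\sigma$: to extend a string $\sigma$ of length $x$ with $R^\sigma$ infinite, search for an element of $R^{\sigma 0}$ or $R^{\sigma 1}$ beyond $h_{x+1}$; whichever you find is guaranteed infinite. Your two-sided setup is thus superfluous --- the whole weight is carried by the pattern alone, applied to all of $\omega$, and the ``fast-growing threshold'' you gestured at is the concrete, intrinsic quantity $m_x$, requiring no precomputed $f$ or $C$.
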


\begin{proof}
	Let $\vec{R} = \seq{R_0, R_1, \ldots}$ be a sequence of sets. Define $c : [\omega]^2 \to 2$ as follows: for all $x < y$,
	\[
	c(x,y) =
	\begin{cases}
		0 & \text{if } (\exists \sigma \in 2^x)(\exists z > y)[z \in R^\sigma \text{ and } R^\sigma \text{ is finite}],\\ 
		1 & \text{otherwise}.
	\end{cases}
	\]
	Given $x$, let $m_x > x$ be least such that $\max R^\sigma \leq m_x$ for all $\sigma \in 2^{x+1}$ for which $R^\sigma$ is finite. Then $c(x,y) = 1$ for all $y \geq m_x$, and $c(x,y) = 0$ for all $y$ with $x < y < m_x$. In particular, $\lim_y c(x,y) = 1$ for all $x$, so $c$ is stable, and every infinite homogeneous set for $c$ must have color $1$. Furthermore, if $H = \{h_0 < h_1 < \cdots\}$ is any such homogeneous set, then necessarily $m_x \leq m_{h_x} \leq h_{x+1}$ for all $x$.
	
	We can thus use $\vec{R} \oplus H$ to compute a sequence of binary strings $\sigma_0 \prec \sigma_1 \prec \cdots$ (ordered by extension) such that $|\sigma_x| = x$ and $R^{\sigma_x}$ is infinite. Let $\sigma_0 = \emptyset$, and suppose by induction that we have defined $\sigma_x$ for some $x \geq 0$ and that $R^{\sigma_x}$ is infinite. Since either $R^{\sigma_x} \cap R^0_x$ or $R^{\sigma_x} \cap R^1_x$ is infinite, there must be a $z > h_{x+1}$ in one of these two sets. Search for the least such $z$, and let $\sigma_{x+1} = \sigma_x b$ for whichever $b \in \{0,1\}$ has $z \in R^{\sigma_x} \cap R^b_x$. Since $z \geq m_x$, we know that $R^{\sigma_{x+1}}$ is infinite, as desired.
	
	Finally, from $\sigma_0 \prec \sigma_1 \prec \cdots$ we can $\vec{R}$-computably define an infinite cohesive set $C$ for $\vec{R}$ in the standard way. For completeness, we give the details. Given $x \in \omega$, and having defined integers $d_y$ for all $y < x$, we let $d_x$ be the least element of $R_{\sigma_{x+1}}$ larger than all these $d_y$. We then let $C = \{d_0 < d_1 < \cdots\}$. To see that this set is cohesive for $\vec{R}$, consider any $y \in \omega$ and let $b = \sigma_{y+1}(y)$. Then for each $x \geq y$ we have that $d_x \in R^{\sigma_{x+1}(y)}_y = R^b_y$, so $C \subseteq^* R^b_y$.
\end{proof}

In the above proof, the constructed coloring $c$ has complicated homogeneous sets essentially because they must all be very sparse, which allows us to code in jump information. Computing the jump is enough to produce cohesiveness, but it is in fact much stronger. Thus, coding using sparseness is not helpful for understanding the true relationship between $\SRT^2_2$ and $\COH$. A better approach might be through limit-homogeneity, where sparseness is not so easily forced. Indeed, notice that $c$ above has very uncomplicated limit-homogeneous sets: in particular, $\omega$ is limit-homogeneous for $c$. This observation prompted Patey~\cite{Patey-2016d} to ask whether Proposition~\ref{P:omniCOHtoSRT} still holds if $\SRT^2_2$ is replaced by $\D^2_2$. Notice that this is equivalent to replacing $\SRT^2_2$ by $\RT^1_2$, since as pointed out above, $\D^2_2$ is just $\RT^1_2$ with  instances given by limit approximations, and thus the two are omnisciently computably equivalent.

\begin{question}[{\cite[Question 2.10]{Patey-2016d}}]\label{Q:omniRT}
  Is~$\COH$ omnisciently computably reducible to~$\RT^1_2$?
\end{question}

As a way to show how, in principle, cohesiveness can be coded into the homogeneous sets of colorings of singletons, we introduce the following weakening of $\COH$, and show it to be a combinatorial consequence of $\RT^1_2$.

\begin{definition}
	Let $\vec{R} = \seq{R_0, R_1, \dots}$ be a sequence of sets. A set $C$ is \emph{Ramsey-cohesive} for $\vec{R}$ if for infinitely many $n \in \omega$, either $C \subseteq^{*} R_n$ or $C \subseteq^{*} \overline{R}_n$.
\end{definition}


\begin{principle}[Ramsey-type cohesiveness principle ($\RCOH$)]
	Every sequence of sets admits an infinite Ramsey-cohesive set.
\end{principle}

\begin{theorem}
	$\RCOH$ is omnisciently computably reducible to $\RT^1_2$.
\end{theorem}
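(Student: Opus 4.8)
The plan is to use the defining feature of omniscient reducibility --- the instance of $\RT^1_2$ we produce need not be computable from $\vec R$ --- together with the slack in Ramsey-cohesiveness, that only \emph{infinitely many} indices $n$ must be decided (rather than all of them, as for $\COH$). Fix a sequence $\vec R = \seq{R_0, R_1, \dots}$ and recall, as in the proof of Proposition~\ref{P:omniCOHtoSRT}, the notation $R^\sigma = \bigcap_{n < |\sigma|} R^{\sigma(n)}_n$ for $\sigma \in 2^{<\omega}$, where $R^0 = R$ and $R^1 = \overline{R}$. The set $T = \{\sigma \in 2^{<\omega} : R^\sigma \text{ is infinite}\}$ is a subtree of $2^{<\omega}$ closed under initial segments in which every node has an immediate successor, since $R^\sigma = R^{\sigma 0} \cup R^{\sigma 1}$ forces one of $R^{\sigma 0}, R^{\sigma 1}$ to be infinite; being infinite and finitely branching, $T$ has an infinite path, and we fix one, $\alpha \in 2^\omega$, so that $R^{\alpha \restriction k}$ is infinite for every $k$. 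We take the instance of $\RT^1_2$ to be $\alpha$ itself, regarded as a $2$-coloring of $[\omega]^1$; this coloring is generally not computable from $\vec R$, but that is irrelevant for omniscient reducibility.

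Next, suppose $H$ is any infinite homogeneous set for this coloring, say of color $i \in \{0,1\}$, so that $\alpha(n) = i$ for all $n \in H$, and put $N = H$, which is computable from $H$. Then for each $n \in N$ we have $R^{\alpha(n)}_n = R^i_n$, and for every $k$ the set $\bigcap_{n \in N,\ n < k} R^i_n$ contains $\bigcap_{n < k} R^{\alpha(n)}_n = R^{\alpha \restriction k}$, which is infinite. Hence the sequence $\seq{R^i_n : n \in N}$ has the property that each of its finite subfamilies has infinite intersection; in particular, at least one of $\seq{R^0_n : n \in N}$ and $\seq{R^1_n : n \in N}$ has this strong finite-intersection property. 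The reduction has $N$ (hence both sequences) but not the value $i$, and so does not know which of the two is the good one.

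It then suffices to show that from $\vec R$ and $N$ one can compute an infinite set $C$ such that for every $n \in N$, either $C \subseteq^{*} R_n$ or $C \subseteq^{*} \overline{R}_n$: such a $C$ is Ramsey-cohesive for $\vec R$, witnessed by the infinite set $N$, and it is computed from $\vec R \oplus H$ since $N$ is computable from $H$, completing the reduction. To build $C$, enumerate $N = \{n_0 < n_1 < \cdots\}$ and, for $g \in \{0,1\}$, consider the greedy construction of a pseudo-intersection of $\seq{R^g_{n_k} : k \in \omega}$ (at step $l$, pick the least element above the previous one lying in $\bigcap_{k \le l} R^g_{n_k}$): if $\seq{R^g_{n_k}}_k$ has the strong finite-intersection property it never stalls and outputs a computable infinite set contained modulo a finite set in $R^g_{n_k}$ for every $k$, while otherwise it outputs only finitely many elements before one of these finite intersections is exhausted beyond every bound. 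Since at least one side is good, running the two constructions in a dovetailed fashion under a fixed priority and with a routine finite-injury bookkeeping isolates a single infinite $C$ which, modulo a finite set, lies on one fixed side of $R_n$ for every $n \in N$.

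I expect this last step to be the main obstacle: although the ingredients are entirely standard, some care is needed to extract a genuine total computable characteristic function for $C$ while not knowing the color $i$ --- equivalently, not knowing in advance whether to pseudo-intersect the $R_n$ or the $\overline{R}_n$ --- and the resolution rests on the fact that a wrong guess eventually manifests itself, via the corresponding pseudo-intersection construction running out of elements, so that only finitely many revisions are needed. A secondary point worth recording is why the instance is $\alpha$ presented as a coloring of singletons: this forces every infinite homogeneous set to agree with $\alpha$ on an \emph{infinite} set $N$, whereas spreading each value of $\alpha$ over infinitely many positions would let an adversary concentrate $H$ and leak only one bit of $\alpha$. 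It is precisely the ``infinitely many $n$'' of Ramsey-cohesiveness, in contrast to the ``all $n$'' of $\COH$, that makes losing the remaining bits --- and the residual ambiguity in $i$ --- harmless, which is also the sense in which the analogous Question~\ref{Q:omniRT} for $\COH$ is genuinely harder.
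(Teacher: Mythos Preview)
Your choice of the $\RT^1_2$-instance is essentially the paper's: the coloring $c$ defined there by $c(n)=0$ iff $R_n\cap R^{c\res n}$ is infinite is precisely a particular path through your tree $T$, so taking an arbitrary path $\alpha$ is the same idea. The observation that any infinite monochromatic $H$ of color $i$ yields $\bigcap_{n\in H,\,n<k}R^i_n\supseteq R^{\alpha\res k}$ infinite for every $k$ is also exactly the paper's key inequality.

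The gap is in what follows. You assert that the reduction ``has $N$ \ldots\ but not the value $i$'' and therefore try to recover $i$ by dovetailing the two greedy pseudo-intersection constructions. This is based on a misreading of (omniscient) computable reducibility: the definition only requires that $\vec R\oplus H$ \emph{compute} a solution, with no uniformity in $H$. The color $i$ is a single bit determined by $H$, so $\vec R\oplus H\oplus i\equiv_T \vec R\oplus H$, and the Turing reduction may simply hardcode $i$. This is exactly what the paper does: knowing $i$, it greedily picks the least new element of $\bigcap_{m\in H\res(n+1)}R^i_m$ at each step, and that is the whole construction of $C$.

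Your dovetailing argument is also incorrect as stated, not merely unnecessary. The justification ``a wrong guess eventually manifests itself, via the corresponding pseudo-intersection construction running out of elements'' is false: nothing prevents the family $\seq{R^{1-i}_n:n\in N}$ from \emph{also} having the strong finite-intersection property (e.g., take the $R_n$ to be independent sets, so that every $R^\sigma$ is infinite). In that case neither greedy construction stalls, so no ``finite-injury'' switching is triggered; and conversely, any protocol that interleaves elements from both sides infinitely often will produce a $C$ with infinitely many elements in both $R_n$ and $\overline{R_n}$ for each $n\in N$, hence not cohesive on any $n\in N$. You give no protocol that provably settles on one side, and the sketch does not supply one. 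Drop the dovetailing entirely and use $i$ as a bit of non-uniform advice; then your argument coincides with the paper's and is complete.
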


\begin{proof}
	Given an instance $\vec{R} = \seq{R_0,R_1,\ldots}$ of $\RCOH$, we define $c : \omega \to 2$ inductively. Fix $n \in \omega$, and suppose we have defined $c \res n$, which we regard as an element of $2^{<\omega}$ of length $n$. We let $c(n) = 0$ if $R_n \cap R^{c \res n}$ is infinite, and otherwise we let $c(n) = 1$. By induction, it follows that $R^{c \res n}$ is infinite for each $n$. Now let $H$ be any infinite homogeneous set for $c$, say with color $i < 2$. Then, in particular, for each $n$ we have that
	\[
		\bigcap_{m \in H \res n} R^i_m \supseteq R^{c \res n},
	\]
	so the intersection on the left is infinite. We can thus compute an infinite Ramsey-cohesive set $C$ for $\vec{R}$ from $\vec{R} \oplus H$, as follows: having defined $C \res n$ for some $n \in \omega$, choose the least element of $\bigcap_{m \in H \res n+1} R^i_m$ larger than $\max (C \res n)$, and add it to $C$. Clearly, $C$ is infinite, and for each $m \in H$ we have $C \subseteq^* R^i_m$.
\end{proof}

Obviously, $\RCOH$ is a restriction, and hence a logical consequence, of $\COH$. Unfortunately, it is also strictly weaker than $\COH$, which we show next, after a few auxiliary lemmas. Thus, the above result does not settle Question~\ref{Q:omniRT}.

\begin{definition}\label{def:cohesive-decision}
	Let $\vec{R} = \seq{R_0, R_1,\ldots}$ be a sequence of sets. We let
	\[
		\Ccal(\vec{R}) = \{ P \in 2^\omega : (\forall \sigma \prec P)[R^\sigma \mbox{ is infinite}]\}.
	\]
\end{definition}

Note that if $\vec{R}$ is computable, then $\Ccal(\vec{R})$ is a $\Pi^{0,\emptyset'}_1$ class.

\begin{lemma}[{Patey~\cite[Lemma 2.4]{Patey-2016}}]\label{lem:tree-to-sequence}
	For every $\Delta^0_2$ infinite tree $T \subseteq 2^{<\omega}$, there is a computable sequence of sets $\vec{R}$ such that $\Ccal(\vec{R}) = [T]$.
\end{lemma}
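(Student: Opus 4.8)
The goal is to encode a $\Delta^0_2$ infinite tree $T \subseteq 2^{<\omega}$ into a \emph{computable} sequence of sets $\vec R = \seq{R_0, R_1, \ldots}$ so that the paths $P$ through $T$ are exactly the $P \in 2^\omega$ all of whose initial segments $\sigma \prec P$ have $R^\sigma$ infinite. The natural idea is to place, for each string $\sigma \in 2^{<\omega}$, a ``marker'' into the appropriate sets $R^\sigma$ infinitely often precisely when $\sigma \in T$, and only finitely often when $\sigma \notin T$. The obstacle is that membership in $T$ is merely $\Delta^0_2$, so we cannot decide it outright; instead we use a $\Delta^0_2$ approximation $\seq{T_s}_{s \in \omega}$ to $T$ and let the sets $R_n$ react to changes in this approximation, exploiting the fact that for $\sigma \in T$ the approximation eventually settles with $\sigma \in T_s$, while for $\sigma \notin T$ it eventually settles with $\sigma \notin T_s$.

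First I would fix a computable sequence of finite trees (or just finite sets of strings) $T_0, T_1, \ldots$ with $\lim_s T_s(\sigma) = T(\sigma)$ for every $\sigma$; we may assume each $T_s$ is a subtree of $2^{<\omega}$ and that it is ``downward closed enough'' for bookkeeping. Then, using a standard pairing, think of the stages as organized so that at each stage $s$ we get to act on one string $\sigma_s$. The construction of $\vec R$ proceeds in stages: at stage $s$, for the string $\sigma$ currently under attention, if $\sigma \in T_s$ we enumerate the current large fresh number $s$ into exactly those $R_n$ (for $n < |\sigma|$) and keep it out of the others so that $s \in R^\sigma$; if $\sigma \notin T_s$ we simply do nothing for $\sigma$ at this stage (or we put $s$ into some $R^\tau$ for a $\tau$ that has already settled into $T$, so no number is wasted — but wasting numbers is harmless). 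The point is: $R^\sigma$ receives a new element at stage $s$ iff $\sigma \in T_s$ and $\sigma$ is the string attended to at $s$. Since each $\sigma$ is attended to infinitely often, $R^\sigma$ is infinite iff $\sigma \in T_s$ for infinitely many $s$, which (because the approximation converges) happens iff $\sigma \in T$.

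With this in hand, the verification splits into two directions. If $P \in [T]$, then every $\sigma \prec P$ lies in $T$, hence $R^\sigma$ is infinite, so $P \in \Ccal(\vec R)$. Conversely, if $P \notin [T]$, then since $T$ is a tree there is a least $\sigma \prec P$ with $\sigma \notin T$; then $R^\sigma$ is finite, so $P \notin \Ccal(\vec R)$. This gives $\Ccal(\vec R) = [T]$. The one technical point to be careful about is consistency of the enumeration: when we put a number $s$ into $R^\sigma$ we are committing, for every $n < |\sigma|$, to $s \in R_n$ iff $\sigma(n) = 0$; since each number $s$ is used for only one string $\sigma_s$, there is never a conflict, and for $n \geq |\sigma_s|$ we may put $s \notin R_n$ (say). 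The resulting $\vec R$ is computable because the $T_s$ are uniformly computable and the whole bookkeeping is effective. The main thing to get right — and the only place requiring genuine care rather than routine checking — is ensuring that $R^\sigma$ genuinely ends up infinite for \emph{every} $\sigma \in T$ (not just those $\sigma$ on some fixed path), which is why every string must be attended to infinitely often, and that it ends up finite for $\sigma \notin T$, which is why numbers are never added to $R^\sigma$ after the approximation to $\sigma$ has stabilized to ``out of $T$''; this is exactly what the convergence of the $\Delta^0_2$ approximation buys us.
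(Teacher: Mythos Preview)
The paper does not actually give its own proof of this lemma; it is quoted as Patey~\cite[Lemma 2.4]{Patey-2016} and used as a black box. That said, your proposal contains a genuine gap, and it is precisely in the step you flag as harmless.

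The problem is the sentence ``for $n \geq |\sigma_s|$ we may put $s \notin R_n$ (say).'' Deciding $R_n(s)$ for \emph{all} $n$ amounts to choosing an element of $2^\omega$, and $s$ lands in $R^\tau$ for every initial segment $\tau$ of that element. With your choice, the element is $\sigma_s 1^\omega$, so $s \in R^\tau$ not only for $\tau \preceq \sigma_s$ but also for every $\tau = \sigma_s 1^k$. Hence your key claim that ``$R^\sigma$ receives a new element at stage $s$ iff $\sigma \in T_s$ and $\sigma$ is the string attended to at $s$'' is false, and the argument that $R^\sigma$ is finite whenever $\sigma \notin T$ collapses. Concretely, take $T = \{0^n : n \in \omega\}$, so $[T] = \{0^\omega\}$. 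At every one of the infinitely many stages $s$ with $\sigma_s = 0 \in T_s$, your construction puts $s \in R_0$ and $s \notin R_n$ for all $n \geq 1$, whence $s \in R^{0\,1^k}$ for every $k$. Thus each $R^{0\,1^k}$ is infinite and $0\,1^\omega \in \Ccal(\vec R) \setminus [T]$.

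The idea you are missing is that at stage $s$ you must commit not just to $\sigma_s$ but to a string that goes \emph{deep} into the current approximation. Arrange the approximation so that each $T_s$ is a subtree of $2^{\leq s}$ containing at least one node of length $s$ (this is possible since $T$ is an infinite $\Delta^0_2$ tree). At stage $s$, if $\sigma_s \in T_s$, extend $\sigma_s$ inside $T_s$ to some $\tau_s$ of length $s$; otherwise take any $\tau_s \in T_s$ of length $s$. Then set $R_n(s)$ according to $\tau_s(n)$ for $n < s$. Now for fixed $\sigma \notin T$, once $s \geq |\sigma|$ is large enough that $T_s$ agrees with $T$ on strings of length $\leq |\sigma|$, we have $\sigma \notin T_s$, and since $T_s$ is a tree and $\tau_s \in T_s$ has length $s \geq |\sigma|$, necessarily $\sigma \not\preceq \tau_s$, so $s \notin R^\sigma$. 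Conversely, for $\sigma \in T$ the infinitely many stages with $\sigma_s = \sigma \in T_s$ yield $\tau_s \succeq \sigma$ and hence $s \in R^\sigma$.
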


\begin{definition}
	Let $T \subseteq 2^{<\omega}$ be an infinite tree.
	\begin{enumerate}
		\item A set $H$ is \emph{homogeneous} for a string $\sigma \in 2^{<\omega}$ if there is some $i < 2$ such that $(\forall x < |\sigma|)[ x \in H \to \sigma(x) = i]$.
		\item A set $H$ is \emph{homogeneous} for $T$ if the set $\{ \sigma \in T : H \mbox{ is homogeneous for } \sigma \}$ is infinite.
	\end{enumerate}
\end{definition}

\begin{lemma}\label{lem:degrees-of-Ramsey-cohesives}
	Let $T \subseteq 2^{<\omega}$ be an infinite $\Delta^0_2$ tree, and let $\vec{R} = \seq{R_0, R_1,\ldots}$ be a computable sequence of sets such that $\Ccal(\vec{R}) = [T]$. Then the sets computing an infinite Ramsey-cohesive set for $\vec{R}$ are exactly those whose jumps compute infinite homogeneous sets for~$T$.
\end{lemma}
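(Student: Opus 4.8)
The statement has the form "$A$ computes an infinite Ramsey-cohesive set for $\vec R$ iff $A'$ computes an infinite homogeneous set for $T$". I need both directions. The underlying identification is between paths through $T$ (equivalently, elements of $\Ccal(\vec R)$) and "ways of choosing sides" $R_n$ vs $\overline R_n$; a Ramsey-cohesive set decides infinitely many of these coordinates consistently with some path. Since $\vec R$ is computable and $\Ccal(\vec R)=[T]$, the key relativization fact is that deciding which of $R_n\cap R^\sigma$, $\overline R_n\cap R^\sigma$ is infinite is a $\emptyset'$ (hence $A'$) question, and a $\Delta^0_2$ tree $T$ is $A'$-computable from the jump.

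**Forward direction ($\Rightarrow$).** Suppose $A$ computes an infinite Ramsey-cohesive set $C$ for $\vec R$. So for infinitely many $n$, either $C\subseteq^* R_n$ or $C\subseteq^* \overline R_n$; record the corresponding bit $h(n)\in\{0,1\}$ on this infinite set $D$. The plan is to show $A'$ can recover an infinite homogeneous set for $T$. First, using $C$ (computable from $A$, hence from $A'$) and quantifying over the finite exceptional set, $A'$ can decide, for each $n$, whether $n\in D$ and if so the value $h(n)$ — this is a $\Sigma^0_2(A)$/$\Pi^0_2(A)$ search that $A'$ decides. Now I claim the restriction of (any fixed path-side choice agreeing with) $h$ gives homogeneity for $T$: since $C\subseteq^* R^{h(n)}_n$ for each $n\in D$ and $C$ is infinite, each finite intersection $\bigcap_{n\in D\cap F} R^{h(n)}_n$ is infinite, so the "side string" determined by $h$ on $D$ is, along $T$, extendible — more precisely, I should extract from this an infinite set $H\subseteq D$ such that $\{\sigma\in T: H \text{ homogeneous for }\sigma\}$ is infinite, mimicking the construction in the proof of the previous theorem but now with the coordinates restricted to $D$. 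This extraction is $A'$-computable because testing infinitude of $R^\tau$ is $\emptyset'$-decidable and membership in $T$ is $\Delta^0_2$.

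**Reverse direction ($\Leftarrow$).** Suppose $A'$ computes an infinite homogeneous set $H$ for $T$, say with the set $S=\{\sigma\in T: H\text{ homogeneous for }\sigma\}$ infinite. I want $A$ itself (not $A'$) to compute an infinite Ramsey-cohesive set $C$. The point of dropping back down from $A'$ to $A$ is that Ramsey-cohesiveness only asks $C\subseteq^* R_n$ \emph{or} $C\subseteq^* \overline R_n$ for infinitely many $n$, and this weak requirement can be met by an $A$-computable (even $\vec R\oplus$-something-computable) greedy construction once we know which $\sigma\in T$ are witnessed. Concretely: for each $\sigma\in S$ there is $i<2$ with $\sigma(x)=i$ for all $x\in H\cap|\sigma|$; since $\sigma\in T$ and $\Ccal(\vec R)=[T]$, the set $R^\sigma$ is infinite, and $R^\sigma\subseteq \bigcap_{x\in H\cap|\sigma|} R^{i}_x$. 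So the infinitely many coordinates indexed by $\bigcup_{\sigma\in S} (H\cap|\sigma|)$ — which is just $H$, an infinite set — all get "committed" to a common side by a single infinite set. The construction: enumerate longer and longer $\sigma\in S$; at stage $s$ with $\sigma_s\in S$ of length $\ell_s$, using $\vec R$ alone, add to $C$ the least element of $R^{\sigma_s}$ above everything chosen so far (note $R^{\sigma_s}$ is infinite because $\sigma_s\in T$, so this search converges). This needs only $\vec R\oplus S$, hence $\vec R\oplus H\leq_T A$ since $H\leq_T A'$... — and here is the subtlety: $H\leq_T A'$, not $A$, so naively the construction is only $A'$-computable, which is too weak. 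I resolve this exactly as in the proof structure for $\COH$ being below $\mathbf a$ with $\mathbf a'\gg\mathbf 0'$: instead of computing $C$ from $H$ directly, I note the construction only needs, at each stage, \emph{one} new $\sigma\in S$ together with membership queries to the computable sets $R_n$; so it suffices that $A$ can, for each bound $b$, find \emph{some} $\sigma\in T$ of length $>b$ on which we can safely extend — but to guarantee homogeneity we genuinely need $\sigma\in S$. The honest route, then, is: the statement only claims equivalence at the level "$A$ computes a Ramsey-cohesive set" $\leftrightarrow$ "$A'$ computes a homogeneous set", so for this direction I should produce $C\leq_T A$ from $H\leq_T A'$, and the way to do that is to observe that the $A'$-computable list of $\sigma\in S$ together with the $\vec R$-computable greedy selection can be \emph{interleaved with the limit approximation to $H$}: since $H\leq_T A'$, fix an $A$-computable approximation $H=\lim_s H_s$; run the greedy construction using $H_s$ at stage $s$, and argue that the finitely many mistakes caused by $H_s\neq H$ below the current length only remove finitely many elements from each relevant $R^i_x$, so the resulting $C$ is still Ramsey-cohesive (it satisfies $C\subseteq^* R^i_x$ for each $x\in H$, losing at most the finitely many early elements).

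**Expected main obstacle.** The delicate point is precisely this last one in the reverse direction: getting $C$ computable from $A$ rather than only from $A'$, while $H$ is only $A'$-computable. The resolution must exploit that Ramsey-cohesiveness tolerates finite error, so a true-stage / finite-injury style argument with the $A$-computable approximation $\lim_s H_s = H$ works, but one has to check carefully that mistakes in $H_s$ at a given coordinate are both transient and harmless (they only ever shrink an infinite set by finitely many elements, never destroy infinitude of the relevant $R^\sigma$, which is guaranteed by $\sigma\in T$ and $\Ccal(\vec R)=[T]$). The forward direction is comparatively routine: everything sits at the level of $\emptyset'$-decidable infinitude questions plus $\Delta^0_2$ tree membership, both absorbed by $A'$.
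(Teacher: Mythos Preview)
Your reverse direction is essentially the paper's proof. The paper fixes an $A$-computable stable approximation $f$ with $\lim_s f(x,s) = H(x)$ and, at stage $n$, searches for some $s > n$ and some $x_{n+1} > x_n$ lying in the finite intersection of the $R_i^b$ over those $i < n$ with $f(i,s)=1$ (for the appropriate color $b$). The search halts because for sufficiently large $s$ the set $\{i < n : f(i,s)=1\}$ equals $H \cap n$, which by homogeneity of $H$ for $T$ lies on some path in $[T] = \Ccal(\vec R)$, so the intersection is infinite. For each fixed $i \in H$, once $n$ exceeds the modulus of $f(i,\cdot)$ every $s > n$ has $f(i,s) = 1$, so $x_{n+1} \in R_i^b$; hence $C \subseteq^* R_i^b$ for all $i \in H$. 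Your diagnosis of the obstacle (dropping from $A'$ down to $A$) and its resolution via the limit approximation are exactly right.

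The forward direction, however, has a gap. You assert that $A'$ can \emph{decide}, for each $n$, whether $C \subseteq^* R_n$ or $C \subseteq^* \overline{R}_n$, calling this ``a $\Sigma^0_2(A)/\Pi^0_2(A)$ search that $A'$ decides''. But each of these predicates is genuinely $\Sigma^0_2(C)$ (hence $\Sigma^0_2(A)$), and $A'$ decides only $\Delta^0_2(A)$ predicates; so neither your set $D$ nor the function $h$ is $A'$-computable in general. The paper's repair is short and also simplifies the rest of your argument: set $U = \{i : C \subseteq^* R_i\}$ and $V = \{i : C \subseteq^* \overline{R}_i\}$. Each is $\Sigma^{0,C}_2$, at least one is infinite, and an infinite $\Sigma^{0,C}_2$ set has an infinite $\Delta^{0,C}_2$ (hence $A'$-computable) subset $U_1$. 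This $U_1$ is \emph{already} homogeneous for $T$ with a single fixed color: for every $\ell$ the set $\bigcap_{x \in U_1 \cap \ell} R_x^b$ contains a cofinite piece of $C$, hence is infinite, hence refines to $R^\sigma$ for some $\sigma$ of length $\ell$ extendible in $\Ccal(\vec R) = [T]$. Your plan to recover all of $D$ and $h$ first and then ``extract'' a homogeneous $H$ is both resting on an oracle miscount and doing more work than needed; once you split into $U$ and $V$ and pass to an $A'$-computable subset of the infinite one, the extraction step disappears.
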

\begin{proof}
Let $C$ be a Ramsey-cohesive set for $\vec{R}$. 
The sets $U = \{ i : C \subseteq^{*} R_i \}$ and $V = \{ i : C \subseteq^{*} \overline{R}_i \}$
are $\Sigma^{0,C}_2$, and one of them is infinite. Say $U$ is infinite, the other case being symmetric.
Then $U$ has an infinite $\Delta^{0,C}_2$ subset $U_1$. The set $U_1$ is homogeneous for $T$.

Conversely, let $H$ be an infinite set homogeneous for~$T$, say for color 1, and let $f : \omega^2 \to 2$ be a stable function such that $\lim_y f(x, y) = H(x)$. Define the set $C = \{ x_0 < x_1 < \cdots \}$ $f$-computably as follows. First, let $x_0 = 0$. Then, having defined $x_n$, search for some stage $s > n$  and some $x_{n+1} \in \bigcap_{i < n, f(i,s) = 1} R_i$ such that $x_{n+1} > x_n$. Such $s$ and $x_{n+1}$ must be found. Indeed, let $s$ be large enough so that $(\forall i < n)f(i,s) = \lim_y f(i,y)$. Then the set $F = \{ f(i,s) : i < n \}$ is a subset of $H$, hence is homogeneous for $T$ with color 1. Since $[T] = \Ccal(\vec{R})$, there is some $P \in  \Ccal(\vec{R})$ such that $F \subseteq P$. In particular, $\bigcap_{i < n, f(i,s) = 1} R_i = \bigcap_{i \in F} R_i \supseteq R_{P \uh \max F}$ is infinite, so there is some $x_{n+1} > x_n$ in it. It is easy to check that $C$ is Ramsey-cohesive for $\vec{R}$.
\end{proof}

The following principle was introduced by Flood~\cite{Flood-2012}.

\begin{principle}[Ramsey-type weak K\"{o}nig's lemma ($\mathsf{RWKL}$)]
	Every infinite binary tree has an infinite homogeneous set.
\end{principle}


Our interest below will be in a relativized form of the above principle. Namely, we will look at $\RWKLp$, which is the assertion that for every function $g : 2^{<\omega} \times \omega \to 2$ such that $\lim_s g(\sigma,s)$ exists for all $\sigma$ and $T = \{ \sigma : \lim_s g(\sigma,s) = 1\}$ is a tree, there exists an infinite homogeneous set for $T$. Let $\JI$ be the problem whose instances are all $X \in \omega^\omega$, and the solutions to any such $X$ are all functions $f : \omega^2 \to \omega$ such that $(\forall x)~X(x) = \lim_s f(x, s)$. Below, $\JI \circ \RWKLp$ denotes the problem whose instances are all $\RWKLp$-instances, and given any such instance $g$, a $\JI \circ \RWKLp$-solution to it is any $\JI$-solution to any $\RWKLp$-solution to $g$, i.e., a function $f : \omega^2 \to 2$ such that $\lim_s f(x,s)$ exists for all $x$ and $\{x : \lim_s f(x,s) = 1\}$ is an infinite homogeneous set for the tree $\{ \sigma : \lim_s g(\sigma,s) = 1\}$.


\begin{lemma}
$\JI \circ \RWKLp \cequiv \RCOH$.
\end{lemma}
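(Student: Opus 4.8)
The plan is to establish the two computable reductions separately, using Lemma~\ref{lem:degrees-of-Ramsey-cohesives} and Lemma~\ref{lem:tree-to-sequence} as the bridge between the combinatorics of Ramsey-cohesive sets and the combinatorics of homogeneous sets for trees. The key observation I would exploit throughout is that a $\JI\circ\RWKLp$-solution to an instance $g$ is, by unwinding the definitions, precisely a limit approximation (indexed by one extra numerical argument) to an infinite homogeneous set for the $\Delta^0_2$ tree $T_g = \{\sigma : \lim_s g(\sigma,s)=1\}$; conversely $\RCOH$ asks for an infinite Ramsey-cohesive set for a sequence $\vec R$, and Lemma~\ref{lem:degrees-of-Ramsey-cohesives} tells us the degrees of those sets are exactly the degrees whose jumps compute infinite homogeneous sets for the associated tree. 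So morally both problems are ``compute the jump-approximation of a homogeneous set for a $\Delta^0_2$ tree'', and the work is in making the input/output translations computable in the right direction.

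\textbf{$\RCOH \cred \JI\circ\RWKLp$.} Given an instance $\vec R = \seq{R_0,R_1,\ldots}$ of $\RCOH$, I would computably produce the $\RWKLp$-instance $g$ whose associated tree is $T = \{\sigma : R^\sigma \text{ is infinite}\}$: this is a $\Delta^0_2$ subtree of $2^{<\omega}$ (membership is $\Pi^{0,\vec R}_2$, but closure under initial segments and the fact that it is infinite let us thin it to a genuine $\Delta^0_2$ tree with the same paths, exactly as in the statement of Lemma~\ref{lem:tree-to-sequence}), so by the limit lemma we get a $\vec R$-computable $g : 2^{<\omega}\times\omega \to 2$ with $T_g = T$. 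Then $\Ccal(\vec R) = [T]$ by Definition~\ref{def:cohesive-decision}. Now take any $\JI\circ\RWKLp$-solution $f$ to $g$: the set $H = \{x : \lim_s f(x,s)=1\}$ is an infinite homogeneous set for $T$, and $H \Tred (\vec R \oplus f)'$ while $f$ itself is a limit approximation. Feeding $f$ (together with $\vec R$) into the construction in the ``conversely'' direction of the proof of Lemma~\ref{lem:degrees-of-Ramsey-cohesives} — which only uses an $f$-computable procedure given a stable approximation to an infinite homogeneous set for $T$ — yields an infinite Ramsey-cohesive set $C$ for $\vec R$ computed from $\vec R \oplus f$. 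That is the required reduction.

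\textbf{$\JI\circ\RWKLp \cred \RCOH$.} Given an $\RWKLp$-instance $g$ with tree $T = T_g$, apply Lemma~\ref{lem:tree-to-sequence} (relativized to the parameter defining $g$, noting $T$ is $\Delta^0_2$ relative to it) to get a $g$-computable sequence $\vec R$ with $\Ccal(\vec R) = [T]$. This $\vec R$ is an $\RCOH$-instance. Given any infinite Ramsey-cohesive set $C$ for $\vec R$, Lemma~\ref{lem:degrees-of-Ramsey-cohesives} gives that $C'$ computes an infinite homogeneous set $H$ for $T$; but in fact the proof of that lemma produces $H$ as an infinite $\Delta^{0,C}_2$ subset of one of the $\Sigma^{0,C}_2$ sets $\{i : C\subseteq^* R_i\}$ or $\{i: C\subseteq^*\overline R_i\}$, so there is a $C\oplus g$-computable function $f : \omega^2\to 2$ with $\lim_s f(x,s) = H(x)$ for all $x$ (using the limit lemma relative to $C$, uniformly). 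Since $H$ is an infinite homogeneous set for $T = T_g$, this $f$ is by definition a $\JI\circ\RWKLp$-solution to $g$, and it is computed from $(g \oplus C)$, as required.

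\textbf{Main obstacle.} The routine combinatorics is all available in the cited lemmas; the point requiring care is \emph{uniformity and directionality of the limit-lemma passages}. In the first reduction I must be sure that a $\JI\circ\RWKLp$-solution $f$ is enough — not just the homogeneous set $H$ it approximates — to run the $f$-computable construction of $C$; this is fine because the relevant half of Lemma~\ref{lem:degrees-of-Ramsey-cohesives} was deliberately phrased in terms of a stable $f$ with $\lim_y f(x,y)=H(x)$. In the second, the subtlety is that Ramsey-cohesiveness only guarantees \emph{one} of $U,V$ is infinite, so $f$ must encode a choice of which color and a $\Delta^{0,C}_2$ enumeration of an infinite homogeneous subset — extracting this uniformly in $C\oplus g$ (rather than merely non-uniformly from $C'$) is the step I would check most carefully, but it follows from the explicit $\Sigma^0_2$ forms written in the proof of Lemma~\ref{lem:degrees-of-Ramsey-cohesives} together with the standard fact that an infinite $\Sigma^{0,C}_2$ set has an infinite $\Delta^{0,C}_2$ subset obtainable uniformly.
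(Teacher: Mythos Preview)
Your proposal is correct and follows essentially the same approach as the paper, invoking Lemmas~\ref{lem:tree-to-sequence} and~\ref{lem:degrees-of-Ramsey-cohesives} in the same way for each direction. Your uniformity concern in the ``main obstacle'' paragraph is unnecessary, since $\cred$ does not require the solution-recovery step to be uniform in the $\Qsf$-solution; the non-constructive choice of which of $U,V$ is infinite is therefore harmless (and your closing claim that the infinite $\Delta^{0,C}_2$ subset is ``obtainable uniformly'' is in fact false in general, but irrelevant here).
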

\begin{proof}
To see that $\JI \circ \RWKLp \cred \RCOH$, let $g : \omega^2 \to 2^{<\omega}$ be a stable function
such that $T = \{ \sigma \in 2^{<\omega} : \lim_s g(\sigma, s) = 1\}$ is a binary tree.
By Lemma~\ref{lem:tree-to-sequence}, there is a $g$-computable sequence of sets $\vec{R}$
such that $[T] = \Ccal(\vec{R})$. Let $C$ be a Ramsey-cohesive set for $\vec{R}$. By Lemma~\ref{lem:degrees-of-Ramsey-cohesives}, there is a $g \oplus C$-computable function 
$f : \omega^2 \to 2$ such that the set $H = \{ x : \lim_s f(x,y) = 1 \}$
is infinite and homogeneous for $T$.
In particular, $f$ is a solution to $T$ viewed as an instance of $\JI \circ \RWKLp$.

Now to see that $\RCOH \cred \JI \circ \RWKLp$, let $\vec{R}$ be an instance of $\RCOH$. Then there is a $\Delta^{0,\vec{R}}_2$ tree $T$ such that $[T] = \Ccal(\vec{R})$.
Let $f : \omega^2 \to 2$ be a function such that the set $H = \{ x : \lim_s f(x,y) = 1 \}$
is infinite and homogeneous for $T$. By Lemma~\ref{lem:degrees-of-Ramsey-cohesives},
$f \oplus \vec{R}$ computes a Ramsey-cohesive set $C$ for $\vec{R}$.
\end{proof}

%
%

\begin{theorem}
There is an $\omega$-model of $\RCOH$ that is not a model of $\COH$.
\end{theorem}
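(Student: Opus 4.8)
The plan is to build the $\omega$-model as the second-order part of a Turing ideal $\mathcal{I}$ in which every instance of $\RCOH$ has a solution but some instance of $\COH$ does not. By the previous lemma, $\RCOH \cequiv \JI \circ \RWKLp$, and by Lemma~\ref{lem:degrees-of-Ramsey-cohesives}, solving an $\RCOH$-instance $\vec{R}$ with $\Ccal(\vec{R})=[T]$ amounts to producing a set whose jump computes an infinite homogeneous set for the $\Delta^{0,\vec{R}}_2$ tree $T$. The key structural observation is that every $\Delta^0_2$ tree $T$ has an infinite homogeneous set that is itself $\Delta^0_2$ (indeed, a $\Pi^0_1$-approximable homogeneous set can be thinned inside a $\Sigma^0_2$ or $\Pi^0_2$ set to a $\Delta^0_2$ one — this is essentially the content of the $\RWKL$ analogue of the low basis theorem, or one can use the fact that an infinite $\Sigma^{0}_{2}$ set has an infinite $\Delta^0_2$ subset). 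So, relative to any oracle $X$, the $\RCOH$-instances coded by $X$ all have solutions already computable from $X'$. This suggests: take $\mathcal{I}$ to be the ideal generated by a single set $G$ that is "$\COH$-avoiding" in a strong sense but closed enough under the jump to solve $\RCOH$. The obvious candidate is to take $G$ to be a set of sufficiently generic or low degree relative to which $\COH$ fails.

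Concretely, I would proceed as follows. First, fix a computable instance $\vec{R}^*$ of $\COH$ with no $\Delta^0_2$ (equivalently, no $\mathsf{low}$-computable, via Jockusch–Stephan) cohesive set — such exist, e.g., an effectively immune-type family, or appeal to the fact that the cohesive sets for some computable $\vec{R}^*$ are exactly the p-cohesive sets, whose degrees $\mathbf{a}$ satisfy $\mathbf{a}'\gg\mathbf{0}'$, so no $\Delta^0_2$ set works. Second, construct, by a forcing/approximation argument, a set $G$ such that (i) $\vec{R}^*$ has no infinite cohesive set computable from $G$ (so $\COH$ fails in $\mathcal{I}$), yet (ii) for every $Z \leq_T G$ and every $\RCOH$-instance $\vec{R} \leq_T Z$, there is an infinite Ramsey-cohesive set for $\vec{R}$ computable from $G$. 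For (ii), the point is that by Lemma~\ref{lem:degrees-of-Ramsey-cohesives} it suffices that $G'$ compute an infinite homogeneous set for the associated tree $T \leq_T Z' \leq_T G'$; and since $T$ is $\Delta^{0,Z}_2$, hence $\Delta^{0,G}_2$, it has an infinite homogeneous set $\Delta^{0,G}_2$, which by the limit lemma (as in the proof of Lemma~\ref{lem:degrees-of-Ramsey-cohesives}) is enough to get the Ramsey-cohesive set back down at the level of $G$. The upshot is that closure of $\mathcal{I}$ under $\RCOH$ comes essentially for free from a "$\Delta^0_2$-homogeneous-set" basis theorem for $\Delta^0_2$ trees, and the only real work is arranging that $\COH$ genuinely fails.

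The main obstacle is step (ii) combined with step (i): we need $G$ low enough — or more precisely, we need $G'$ to be weak enough — that $\vec{R}^*$ has no $G$-computable cohesive set, while simultaneously $G'$ must be "strong enough for $\RCOH$." These are compatible precisely because $\RCOH$ only asks for $G'$ to dominate/compute $\Delta^{0,G}_2$ objects (which is automatic), whereas $\COH$ for the fixed $\vec{R}^*$ would require computing a set whose jump is of PA degree over $G'$, a genuinely stronger demand. The cleanest way to make this rigorous is a preservation argument: one shows that the class of $\RCOH$-solutions is, relative to any oracle, already populated at the jump, so iterating "add an $\RCOH$-solution" never increases Turing degree beyond what a single suitably chosen $G$ provides; and one chooses $G$ via a $\COH$-avoidance forcing (a variant of the constructions showing $\COH \nscred \SRT^2_2$, or directly a forcing with conditions controlling $G'$) guaranteeing no cohesive set for $\vec{R}^*$ appears. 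Assembling $\mathcal{I}$ from $G$ (closing under $\oplus$ and Turing reducibility) then yields an $\omega$-model satisfying $\RCOH$ but not $\COH$, and in particular $\RCOH$ does not imply $\COH$ over $\RCA_0$.
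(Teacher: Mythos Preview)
Your proposal has a genuine gap: the ``key structural observation'' that every $\Delta^0_2$ infinite tree has a $\Delta^0_2$ infinite homogeneous set is false. Unrelativized, this would say that every computable infinite binary tree has a computable infinite homogeneous set, i.e., that $\rwkl$ holds in the minimal $\omega$-model; but Flood~\cite{Flood-2012} showed that $\rwkl$ implies $\DNR$ over $\rca$, so $\rwkl$ fails in every principal Turing ideal. Your parenthetical justifications do not help: a homogeneous set for a $\Delta^0_2$ tree need not be $\Sigma^0_2$ or $\Pi^0_2$, and there is no ``$\rwkl$ analogue of the low basis theorem'' producing \emph{computable} homogeneous sets.

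In fact the single-set strategy is doomed outright. If the Turing ideal below some $G$ satisfied $\RCOH$, then by the relativized Lemmas~\ref{lem:tree-to-sequence} and~\ref{lem:degrees-of-Ramsey-cohesives}, for every $G'$-computable infinite binary tree $T$ the set $G'$ would compute an infinite homogeneous set for $T$; that is, the ideal below $G'$ would satisfy $\rwkl$, hence $\DNR$, yielding a $G'$-computable function DNR relative to $G'$---impossible. The paper therefore builds the model as a genuine union over an increasing chain $X_0 \leq_T X_1 \leq_T \cdots$. At each step it takes the next instance $\vec{R} \leq_T X_i$, picks any path $P$ through the associated $\Delta^{0,X_i}_2$ tree, applies Liu's theorem~\cite{Liu-2012} to obtain an infinite $H \subseteq P$ or $H \subseteq \overline{P}$ with $H \oplus X_i'$ not of PA degree over $\mathbf{0}'$, and then uses Friedberg jump inversion to produce $X_{i+1} \geq_T X_i$ with $X_{i+1}' \equiv_T H \oplus X_i'$. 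The invariant ``$X_i'$ is not PA over $\mathbf{0}'$'' blocks $\COH$ via Jockusch--Stephan, and Liu's theorem is exactly what lets one add the needed homogeneous sets one at a time without destroying it.
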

\begin{proof}
We build an increasing sequence of sets $X_0 \Tred X_1 \Tred \cdots$
such that for every $i$, the jump of $X_i$ is not of PA degree relative to $\bf 0'$,
and for every $i$ and every $X_i$-computable sequence of sets $\vec{R}$,
there is a $j$ such that $X_j$ computes an infinite Ramsey-cohesive set for $\vec{R}$.  The Turing ideal $\Mcal = \{ Z : (\exists i)~Z \Tred X_i \}$ is then a model of $\RCOH$, but not a model of $\COH$. Indeed, the instance of $\COH$ composed of all the primitive recursive sets belongs to $\Mcal$, but by Jockusch and Stephan~\cite[Theorem 2.1]{JS-1993}, any cohesive set for this sequence has jump of PA degree over $\bf 0'$.

Start with $X_0 = \emptyset$. Having defined $X_0, \dots, X_i$,
let $\vec{R}$ be the next $X_i$-computable sequence of sets in an order chosen so that we eventually consider every sequence of sets computable in any $X_j$.
By Lemmas~\ref{lem:tree-to-sequence} and~\ref{lem:degrees-of-Ramsey-cohesives}, there is an $\vec{R}$-computable instance $f$ of $\RWKLp$ 
such that for every set $Y$ whose jump computes a solution to $f$, we have that $Y \oplus \vec{R}$ computes an infinite Ramsey-cohesive set for $\vec{R}$.
Let $P$ be a path through the tree $T = \{ \sigma : \lim_y f(\sigma, y) = 1 \}$. Since $X_i'$ is not of PA degree relative to $\bf 0'$ by inductive hypothesis, it follows by Liu's theorem~\cite[Theorem 1.5]{Liu-2012} that there is an infinite set $H \subseteq P$ or $H \subseteq \overline{P}$ such that $H \oplus X_i'$ is not of PA degree over $\bf 0'$. By the relativized Friedberg jump inversion theorem (see~\cite[Theorem 2.16.1]{DH-2010}), there is a set $X_{i+1} \geq_T X_i$ such that $X_{i+1}' \equiv_T H \oplus X_i'$. In particular, $X_{i+1}$ computes an infinite Ramsey-cohesive set for $\vec{R}$ and $X_{i+1}'$ is not of PA degree over $\bf 0'$.
\end{proof}

\section{$\COH$ and Cohen forcing with locks}\label{S:locks}

An important feature of $\COH$ is that it has a \emph{universal instance}. More precisely, for each set $A$ there is an $A$-computable instance $\vec{R} = \seq{R_0,R_1,\ldots}$ of $\COH$ such that for any other $A$-computable instance $\vec{S} = \seq{S_0,S_1,\ldots}$ and any solution $C$ to $\vec{R}$, we have that $A \oplus C$ computes a solution to $\vec{S}$. This fact follows by the aforementioned result of Jockusch and Stephan~\cite[Theorem 2.1]{JS-1993}, which can be restated as follows: for any set $X$, there is an $A \oplus X$-computable solution to every $A$-computable instance of $\COH$ precisely when $\deg(A \oplus X)' \gg \deg(A)'$. Thus, an $A$-computable instance $\vec{R}$ of $\COH$ is universal
just in case every solution $C$ to $\vec{R}$ satisfies $\deg(A \oplus C)' \gg \deg(A)'$. As first pointed out in~\cite{JS-1993}, the sequence of primitive $A$-recursive sets has this property.

The existence of universal instances means that, in principle, there is no need to ever \emph{construct} complicated instances of $\COH$---such as for separation or non-reduction results---since there exists a maximally complicated one. In practice, however, constructing the instance explicitly often gives more flexibility. This is done, for example, by Dzhafarov~\cite[Theorem 5.2]{Dzhafarov-2016} to prove that $\coh \nured \srt^2_2$, and by Dzhafarov, Patey, Solomon, and Westrick~\cite[Corollary 1.6]{DPSW-2017} to prove that $\COH \nscred \srt^2_{<\infty}$. Both of these arguments use a forcing notion, sometimes called \emph{Cohen forcing with locks}, which is very natural for building instances of $\COH$. However, we show in this section that this forcing does not produce \emph{maximally} complicated (i.e., universal) instances. Thus, while the instances it does produce suffice for certain separations, they may not be adequate for all.

Cohen forcing with locks is the following forcing notion.


\begin{definition}
Let $\Pb$ be the following notion of forcing.
\begin{enumerate}
	\item A \emph{condition} is a tuple $p = (\sigma_0, \dots, \sigma_{k-1}, f)$, where:
	\begin{itemize}
		\item $\sigma_0, \dots, \sigma_{k-1}$ are binary strings;
		\item $f$ is a function from $\{0, \dots, k-1\}$ to $\{0,1,u\}$.
	\end{itemize}
	\item A condition $q = (\tau_0, \dots, \tau_{\ell-1}, g)$ \emph{extends} $p$, written $q \leq p$, if:
	\begin{itemize}
		\item $\ell \geq k$;
		\item $f \preceq g$;
		\item $\sigma_i \preceq \tau_i$ for all $i < k$;
		\item for all $i < k$ such that $f(i) \in \{0,1\}$ and every $x$ with $|\sigma_i| \leq x < |\tau_i|$, we have $\tau_i(x) = f(i)$.
	\end{itemize}
\end{enumerate}
\end{definition}


From any generic filter $\mathcal{G}$ on $\mathbb{P}$ we can define sets $R^{\mathcal{G}}_i = \bigcup \{\sigma_i : (\sigma_0, \dots, \sigma_{k-1}, f) \in \mathcal{G} \}$ for each $i \in \omega$, where we identify binary strings with the finite sets they define, as well as $\vec{R}^{\mathcal{G}} = \seq{R^\mathcal{G}_0, R^\mathcal{G}_1, \ldots}$, which is naturally an instance of $\COH$. Similarly, we can define $f^\mathcal{G} = \bigcup \{f : (\sigma_0, \dots, \sigma_{k-1}, f) \in \Fcal \}$, which is a total function $\omega \to \{0,1,u\}$. Notice that for any $i$ such that $f^\mathcal{G}(i) = b \in \{0,1\}$, we necessarily have that $R^\mathcal{G}_i(x) = b$ for cofinitely many $x$, so we think of $R^\mathcal{G}_i$ as being ``locked'' to the value $b$ from some point on. For any $i$ such that $f^\mathcal{G}(i) = u$, there will be infinitely many $x$ such that $R^\mathcal{G}_i(x) = 0$, and infinitely many $x$ such that $R^\mathcal{G}_i(x) = 1$, so we think of $R^\mathcal{G}_i$ as being ``unlocked''.

We will exploit an atypical feature of $\vec{R}^\mathcal{G}$, namely that merely knowing which columns are unlocked allows us to compute a cohesive set for the instance.

\begin{proposition}\label{P:cohfromlocks}
	Let $\mathcal{G}$ be a sufficiently generic filter on $\mathbb{P}$. Let $U = \{i \in \omega: f^\mathcal{G}(i) = u\}$. Then $\vec{R}^\mathcal{G} \oplus U$ computes an infinite cohesive set for $\vec{R}^\mathcal{G}$.
\end{proposition}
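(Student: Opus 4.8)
The plan is to show that, given the set $U$ of unlocked columns together with $\vec{R}^{\mathcal G}$, one can carry out the standard ``greedy intersection'' construction of a cohesive set, using the crucial fact that for the locked columns the eventual behavior of $R^{\mathcal G}_i$ is determined by $f^{\mathcal G}(i)$, and $f^{\mathcal G}$ is computable from $U$ together with finitely much information---indeed $f^{\mathcal G}(i) = u$ iff $i \in U$, and otherwise we only need to know whether $f^{\mathcal G}(i)=0$ or $1$. The first step is a density argument: I would show that for each $i$, the set of conditions $p = (\sigma_0,\dots,\sigma_{k-1},f)$ with $i < k$ and $f(i) \neq u$ such that $\max \sigma_i$ is ``large'' relative to $\sigma_0, \dots, \sigma_{k-1}$ is dense, and similarly that we can force each finite intersection pattern among the unlocked columns to have an element above any given threshold. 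More precisely, the key genericity facts I need are: (i) every column gets an eventual value, i.e.\ for each $i$ the filter decides $f(i)$; and (ii) for every finite $F \subseteq U$ and every bound $m$, there is $x > m$ with $x \in \bigcap_{i \in F} R^{\mathcal G}_i$ --- because unlocked columns can always be freely extended to put a $1$ anywhere, so the relevant extension sets are dense.

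Next, assuming $\mathcal G$ is sufficiently generic so that (i) and (ii) hold, I build the cohesive set $C$ $\vec{R}^{\mathcal G} \oplus U$-computably by recursion. The ``plan'' for $C$ relative to $\vec{R}^{\mathcal G}$ will be the function $b : \omega \to \{0,1\}$ where $b(i) = f^{\mathcal G}(i)$ if $i \notin U$, and $b(i) = 1$ if $i \in U$; note $C$ should satisfy $C \subseteq^* R^{b(i)}_i$ for all $i$. To compute $b$ from $\vec{R}^{\mathcal G} \oplus U$: if $i \in U$, set $b(i) = 1$; if $i \notin U$, then $R^{\mathcal G}_i$ is eventually constant, and we can find its eventual value by searching (this terminates by (i)). Then $C = \{x_0 < x_1 < \cdots\}$ is defined by: having chosen $x_0 < \cdots < x_{n-1}$, let $x_n$ be the least $x > x_{n-1}$ such that $x \in \bigcap_{i < n,\ i \in U} R^{\mathcal G}_i$ and (for locked $i<n$) $x \in R^{b(i)}_i$. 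Such $x$ exists: by (ii) there are arbitrarily large elements of $\bigcap_{i<n,\,i\in U} R^{\mathcal G}_i$, and for each locked $i<n$ all sufficiently large $x$ satisfy $x \in R^{b(i)}_i$ since $R^{\mathcal G}_i$ is locked to $b(i)$; so the finitely many constraints are simultaneously satisfiable above any bound. This search is $\vec{R}^{\mathcal G} \oplus U$-effective.

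Finally I verify cohesiveness: fix any $j$. If $j \in U$, then for all $n > j$ we chose $x_n \in R^{\mathcal G}_j$, so $C \subseteq^* R_j = R^0_j$ (reading $R^0 = R$), hence $C \subseteq^* R_j$. If $j \notin U$, then for all $n > j$ we chose $x_n \in R^{b(j)}_j$, so $C \subseteq^* R^{b(j)}_j$, which is either $R_j$ or $\overline{R}_j$. Either way $C$ is cohesive for $\vec{R}^{\mathcal G}$, and it is infinite by construction. I expect the main obstacle to be pinning down exactly which density/genericity requirements are needed and checking they are genuinely dense in $\mathbb P$ --- in particular fact (ii), that an unlocked column can always be extended to place a $1$ at a prescribed later position while respecting the lock constraints on the other columns, which is where the structure of the extension relation on $\mathbb P$ has to be used carefully; everything after that is a routine finite-injury-free recursion.
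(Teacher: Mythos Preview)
Your overall strategy is right, and your genericity fact (ii) --- that any finite set of unlocked columns has infinitely many common $1$s --- is exactly the engine the paper uses (phrased there as ``the join of any finite number of $R^{\mathcal G}_i$ for $i \in U$ is Cohen generic''). But there is a genuine gap in your treatment of the \emph{locked} columns.

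You assert that for $i \notin U$ one can ``find [the] eventual value [of $R^{\mathcal G}_i$] by searching (this terminates by (i))'', and hence compute $b(i)=f^{\mathcal G}(i)$ from $\vec R^{\mathcal G}\oplus U$. This does not work: knowing that a $\{0,1\}$-sequence is eventually constant does not let you compute its limit. From $\vec R^{\mathcal G}\oplus U$ you learn only that $R^{\mathcal G}_i$ is finite or cofinite when $i\notin U$; deciding which is a $\Sigma^0_2$ question, not a computable one, and your fact (i) merely says $f^{\mathcal G}$ is total, not that it is recoverable from $\vec R^{\mathcal G}\oplus U$. (There is also a sign slip: with the convention $R^0=R$, $R^1=\overline R$, if $f^{\mathcal G}(i)=0$ then $R^{\mathcal G}_i$ is \emph{finite}, so you want $C\subseteq^* \overline{R^{\mathcal G}_i}=R^1_i$, not $R^{b(i)}_i=R^0_i$. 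But this is minor compared to the computability issue.) Since your recursion for $x_n$ explicitly tests membership in $R^{b(i)}_i$ for locked $i<n$, it is not $\vec R^{\mathcal G}\oplus U$-effective as written.

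The fix is to do less: drop the locked columns from the constraints entirely. At step $n$ require only $x_n\in\bigcap_{i<n,\ i\in U} R^{\mathcal G}_i$ (or, as the paper does, index through the first $n+1$ elements of $U$). Your fact (ii) guarantees such $x_n$ exists, and the search is clearly $\vec R^{\mathcal G}\oplus U$-computable. Cohesiveness for $i\in U$ follows exactly as you argue; for $i\notin U$ it is automatic, since $R^{\mathcal G}_i$ is finite or cofinite and any infinite set is almost contained in one side. This is precisely the paper's proof.
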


\begin{proof}
	Clearly, each $R^\mathcal{G}_i$ for $i \in U$ is Cohen generic, and so is the join of any finite number of such $R^\mathcal{G}_i$. By genericity, $U$ is infinite, so we can list out its elements as $i_0 < i_1 < \cdots$. We now define a set $C = \{ x_0 < x_1 < \cdots\}$ computably from $\vec{R}^\mathcal{G} \oplus U$, as follows. Fix $k \in \omega$, and suppose we have defined $x_j$ for all $j < k$. Let $x_k$ be the least $x$ such that $x > x_j$ for all $j < k$, and $R_{i_j}^\mathcal{G}(x) = 1$ for all $j \leq k$. This $x$ exists because $\bigoplus_{j \leq k} R^\mathcal{G}_{i_j}$ is generic. Thus, for each $i \in U$ we have that almost all the $x_j$ belong to $R^\mathcal{G}_i$, hence $C \subseteq^* R^\mathcal{G}_i$. On the other hand, for any $i \notin U$, we have that $R^\mathcal{G}_i =^* \omega$ or $R^\mathcal{G}_i =^* \emptyset$, so trivially also $C \subseteq^* R^\mathcal{G}_i$ or $C \subseteq^* \overline{R^\mathcal{G}_i}$. Hence, $C$ is cohesive for $\vec{R}^\mathcal{G}$, as desired.
\end{proof}

\noindent Of course, the set $U$ in the above proposition is computable from $f^\mathcal{G}$, so in particular, $\vec{R}^\mathcal{G} \oplus f^\mathcal{G}$ always computes an infinite cohesive set for $\vec{R}^\mathcal{G}$.

Our main goal in this section is to prove the following result.

\begin{theorem}\label{thm:generic-cohesive-not-pazp}
	Let $\Gcal$ be a sufficiently generic filter on $\mathbb{P}$. Then $(\vec{R}^\mathcal{G} \oplus f^\mathcal{G})'$ is not of PA degree relative to ${\bf 0}'$.
\end{theorem}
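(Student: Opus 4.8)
The plan is to show that a sufficiently generic $(\vec R^{\mathcal G}\oplus f^{\mathcal G})'$ fails to be of PA degree over $\mathbf 0'$ by a direct forcing argument, building on the observation that the only ``complexity'' in a generic condition comes from the finitely many already-locked columns plus the behaviour of the unlocked columns, which are merely Cohen-generic. Concretely, recall that a degree $\mathbf a$ is of PA degree over $\mathbf 0'$ iff $\mathbf a$ computes a member of every nonempty $\Pi^{0,\emptyset'}_1$ class, equivalently iff $\mathbf a$ computes a $\{0,1\}$-valued $\mathrm{DNC}$ function relative to $\emptyset'$. So it suffices to produce a single $\Pi^{0,\emptyset'}_1$ class $\mathcal P$ (or a single relativized DNC requirement) that is nonempty but has no member computed by $(\vec R^{\mathcal G}\oplus f^{\mathcal G})'$. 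I would use the standard such class: for a fixed effective enumeration of $\emptyset'$-partial computable functions, let $\mathcal P$ consist of the total $\{0,1\}$-valued functions $g$ with $g(e)\neq \Phi_e^{\emptyset'}(e)$ whenever the latter halts and lies in $\{0,1\}$; this $\mathcal P$ is nonempty and $\Pi^{0,\emptyset'}_1$, and being of PA degree over $\mathbf 0'$ is equivalent to computing some $g\in\mathcal P$.

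The heart of the argument is a density/forcing lemma: for every pair of indices $(d,e)$ (coding a potential Turing reduction $\Phi_d$ with use oracle $(\vec R\oplus f)'$, computing a $\{0,1\}$-valued total function which is to be the $\mathbf 0'$-relative DNC function), the set of conditions $p$ forcing ``$\Phi_d^{(\vec R^{\mathcal G}\oplus f^{\mathcal G})'}$ is not a member of $\mathcal P$'' is dense. To handle a jump-oracle reduction I would pass, as usual, to the $\emptyset'$-computable approximation: $(\vec R^{\mathcal G}\oplus f^{\mathcal G})'$ is $\Sigma^{0,\vec R^{\mathcal G}\oplus f^{\mathcal G}}_1$, and membership questions about it can be approximated using $\emptyset'$ together with a sufficiently long initial segment of the generic. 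Given a condition $p=(\sigma_0,\dots,\sigma_{k-1},f)$, I would argue that since only finitely many columns are mentioned and only the columns with $f(i)\in\{0,1\}$ are constrained, while the rest are free, one can---$\emptyset'$-effectively in $p$---search over extensions $q\le p$ and stages $s$ looking for a decision $\Phi_d^{[q],s}(n)\!\downarrow\,=b$ for the relevant DNC index $n$; if such a $q$ is found, extend to it and then diagonalize by an explicit choice of $\Phi_n^{\emptyset'}(n)$ computation that $\emptyset'$ can carry out, forcing $\Phi_d^{(\vec R^{\mathcal G}\oplus f^{\mathcal G})'}(n)=\Phi_n^{\emptyset'}(n)$, hence the computed function is \emph{not} in $\mathcal P$; if no such $q$ exists, then $p$ already forces $\Phi_d^{(\vec R^{\mathcal G}\oplus f^{\mathcal G})'}$ to be partial (or non-$\{0,1\}$-valued) at $n$, which again means it is not in $\mathcal P$. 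Either way $p$ has an extension forcing the failure, giving density.

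Two technical points need care. First, one must verify that ``$\Phi_d^{[q],s}$'' is a well-defined, uniformly $\emptyset'$-approximable object: the usual subtlety is that the jump of the generic depends on the \emph{whole} generic, not just on $q$, so I would phrase the forcing relation for $\Sigma^0_1$ facts about $(\vec R^{\mathcal G}\oplus f^{\mathcal G})'$ using the standard Cohen forcing-the-jump machinery (a $\Sigma^{0,X}_1$ fact is forced by a condition iff some extension makes the witnessing computation converge, with the monotonicity guaranteed because along the unlocked columns extensions behave like Cohen conditions, and along the locked columns the future is already determined up to a finite patch). Second, one must make sure $\emptyset'$ genuinely suffices to run the search over extensions $q\le p$: this is where the structure of $\mathbb P$ helps, because extending $p$ to $q$ is a finite, decidable operation and ``there exist $q\le p$ and $s$ with $\Phi_d^{[q],s}(n)\!\downarrow$'' is a $\Sigma^0_1$ question (in $p$), hence $\emptyset'$-decidable. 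I expect the main obstacle to be setting up the forcing-the-jump relation cleanly so that the density lemma's case split (``a convergent computation is forceable'' vs.\ ``no such condition exists, so partiality is forced'') is literally correct, including the bookkeeping that the diagonalizing value $\Phi_n^{\emptyset'}(n)$ is itself $\emptyset'$-computable at the relevant index $n$ so that the whole diagonalization is genuinely witnessed against \emph{every} reduction $\Phi_d$; once that is in place, intersecting the countably many dense sets over all $(d,e)$ and invoking sufficient genericity finishes the proof.
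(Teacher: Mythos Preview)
Your overall architecture matches the paper's: for each $\{0,1\}$-valued functional $\Phi$, prove a density lemma showing that below any condition $p$ some extension forces either divergence of $\Phi^{(\vec{\mathtt R}\oplus\mathtt f)'}$ at some input, or agreement with $\Phi^{\emptyset'}_x(x)$ at some input $x$; then sufficient genericity finishes. Your divergence branch is fine. The gap is in the convergence branch.

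You write that once a $q\leq p$ forces $\Phi^{(\vec{\mathtt R}\oplus\mathtt f)'}(n)\converges=b$ for ``the relevant DNC index $n$'', you will ``diagonalize by an explicit choice of $\Phi_n^{\emptyset'}(n)$''. But $\Phi_n^{\emptyset'}(n)$ is not ours to choose: for a fixed $n$ the forced value $b$ is whatever the extension search produces, and there is no reason $b$ should equal $\Phi_n^{\emptyset'}(n)$. You never say how $n$ is selected, and without that the step fails. (A recursion-theorem fix relative to $\emptyset'$ is possible, but you do not invoke it, and your closing worry about ``bookkeeping that the diagonalizing value $\Phi_n^{\emptyset'}(n)$ is itself $\emptyset'$-computable at the relevant index $n$'' suggests you sensed the hole without closing it.)

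The paper's proof avoids this entirely. Assuming no extension of $p$ forces divergence, it observes that forcing the $\Sigma^0_2$ sentence $\Phi^{(\vec{\mathtt R}\oplus\mathtt f)'}(x)\converges=b$ is $\Sigma^0_2$-definable (since $\mathbb P$ is a computable forcing), hence $\emptyset'$ can, uniformly in $x$, \emph{find} an extension and a bit $b_x$ forcing convergence to $b_x$ at $x$. This yields a $\emptyset'$-computable descending sequence $p=p_0\geq p_1\geq\cdots$ with $p_x\Vdash\Phi^{(\vec{\mathtt R}\oplus\mathtt f)'}(x)\converges=b_x$. Now the whole map $x\mapsto b_x$ is $\emptyset'$-computable, so it cannot be DNC relative to $\emptyset'$; hence there exists $x$ with $b_x=\Phi_x^{\emptyset'}(x)\converges$, and $p_x$ is the desired extension. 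The point is that you do not aim at a single prechosen $n$: you compute \emph{all} forced values and let the failure of $\emptyset'$-self-DNC pick the witness.

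One minor correction: the question ``does some $q\leq p$ force $\Phi^{(\vec{\mathtt R}\oplus\mathtt f)'}(n)\converges$'' is $\Sigma^0_2$, not $\Sigma^0_1$ as you assert (the oracle is a jump, so convergence is already $\Sigma^0_2$ in the generic). Your conclusion that the search is $\emptyset'$-effective is still correct, but for the right reason: $\Sigma^0_2$ is $\emptyset'$-c.e., so $\emptyset'$ can enumerate such $q$'s.
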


\noindent Combining this theorem with the preceding proposition, we immediately get the following.

\begin{corollary}
	Let $\Gcal$ be a sufficiently generic filter on $\mathbb{P}$. Then there is an $\vec{R}^\mathcal{G}$-cohesive set $C$ such that $(\vec{R}^\mathcal{G} \oplus C)'$ is not of PA degree relative to~${\bf 0}'$. In particular, $(\vec{R}^\mathcal{G} \oplus C)'$ is not of PA degree relative to~$\deg(\vec{R}^\mathcal{G})'$, so $\vec{R}^\mathcal{G}$ is not a universal instance.
\end{corollary}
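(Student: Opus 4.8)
Write $Z := \vec{R}^\mathcal{G} \oplus f^\mathcal{G}$. Recall the standard characterization: a Turing degree is of PA degree relative to ${\bf 0}'$ exactly when it computes a $\{0,1\}$-valued function $g$ such that $g(e) \neq \Phi_e^{\emptyset'}(e)$ whenever $\Phi_e^{\emptyset'}(e)$ converges with value in $\{0,1\}$. Call such a $g$ a \emph{$\{0,1\}$-valued $\emptyset'$-DNC function}. The plan is to show that, for sufficiently generic $\mathcal{G}$, no $\{0,1\}$-valued $\emptyset'$-DNC function is computable from $Z'$.

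First I would reduce to a jump-free statement. Since $\mathbb{P}$ is Cohen forcing with some bookkeeping, a sufficiently generic $Z$ is ``$1$-generic-like'': for each $e$ the set of conditions that decide $\Phi_e^Z(e)$ is dense, because a condition either has an extension whose determined part of $Z$ already makes $\Phi_e^Z(e)$ converge (lengthening the finitely many strings involved, and if convenient freezing the relevant columns via the lock function), or else it already forces $\Phi_e^Z(e)$ to diverge; moreover $\emptyset'$ can decide, uniformly in a condition, which of these it realizes, since these are respectively a $\Sigma^0_1$ and a $\Pi^0_1$ fact over the computable set of extensions of that condition. Reading this off gives $Z' \leq_T Z \oplus \emptyset'$, hence $Z' \equiv_T Z \oplus \emptyset'$; since being of PA degree relative to ${\bf 0}'$ is a property of a Turing degree, it therefore suffices to show that $Z \oplus \emptyset'$ is not of PA degree relative to ${\bf 0}'$.

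The core of the argument is a density lemma: for every Turing functional $\Gamma$, the set $D_\Gamma$ of conditions that force $\Gamma^{Z \oplus \emptyset'}$ not to be a $\{0,1\}$-valued $\emptyset'$-DNC function is dense. Fix a condition $p$. The key observation is that the relation ``$q \leq p$ and $q$ forces $\Gamma^{Z \oplus \emptyset'}(n)$ to converge to $b$'' — meaning the computation halts with output $b$ and its $Z$-side use is confined to the part of $Z$ that $q$ determines (initial segments of the unlocked columns, the frozen locked columns, and an initial segment of $f^\mathcal{G}$), with arbitrary $\emptyset'$-side use — is $\emptyset'$-c.e.\ uniformly in $q$ and $n$: it is an existential search over computation stages and over the computable set of extensions of $p$, with the $\emptyset'$-side oracle answers checked against $\emptyset'$. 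Hence $\emptyset'$ can search for a $q \leq p$ forcing $\Gamma^{Z \oplus \emptyset'}(n)$ to converge; let $\theta_p$ be the partial $\emptyset'$-computable function that runs this search on input $n$ and outputs the witnessed value $b$ when it succeeds. By the recursion theorem relativized to $\emptyset'$ there is an index $e_0$ with $\Phi_{e_0}^{\emptyset'}(e_0) \simeq \theta_p(e_0)$. If $\theta_p(e_0)$ converges to $b$, the search has produced $q \leq p$ forcing $\Gamma^{Z \oplus \emptyset'}(e_0)$ to converge to $b = \Phi_{e_0}^{\emptyset'}(e_0)$: when $b \notin \{0,1\}$ this $q$ forces $\Gamma^{Z \oplus \emptyset'}$ to take a value outside $\{0,1\}$, and when $b \in \{0,1\}$ it forces $\Gamma^{Z \oplus \emptyset'}(e_0) = \Phi_{e_0}^{\emptyset'}(e_0)$, so in both cases $q \in D_\Gamma$. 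If instead $\theta_p(e_0)$ diverges, then no extension of $p$ forces $\Gamma^{Z \oplus \emptyset'}(e_0)$ to converge; but any convergent computation over a sufficiently generic $Z$ through $p$ has finite use and would be forced by some extension of $p$ in the filter (by the Cohen-style density of lengthening strings and adjoining columns), so $p$ itself forces $\Gamma^{Z \oplus \emptyset'}(e_0)$ to diverge, and $p \in D_\Gamma$. Thus $D_\Gamma$ is dense, so a sufficiently generic $\mathcal{G}$ meets every $D_\Gamma$, whence $Z \oplus \emptyset' \equiv_T Z'$ computes no $\{0,1\}$-valued $\emptyset'$-DNC function and is not of PA degree relative to ${\bf 0}'$.

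The step I expect to be the main obstacle is the density lemma, and within it the interplay of two ingredients: the recursion theorem is forced on us because we cannot prescribe $\Phi_{e_0}^{\emptyset'}(e_0)$ in advance, so we must instead engineer the index $e_0$ so that $\Phi_{e_0}^{\emptyset'}(e_0)$ matches whatever $\Gamma$ is going to be forced to output at $e_0$; and we must simultaneously check that every ``forcing'' relation involved is $\emptyset'$-c.e.\ (or $\emptyset'$-decidable). The locks in $\mathbb{P}$ are exactly what make this bookkeeping go through cleanly, since a single condition can completely pin down finitely many columns of $\vec{R}^\mathcal{G}$, so the use of a computation over $Z$ can always be confined to a portion of $Z$ that some condition determines.
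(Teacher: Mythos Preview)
Your proposal is a proof of Theorem~\ref{thm:generic-cohesive-not-pazp} (that $(\vec{R}^\mathcal{G} \oplus f^\mathcal{G})'$ is not of PA degree over ${\bf 0}'$), not of the Corollary itself. The Corollary asks for an $\vec{R}^\mathcal{G}$-\emph{cohesive set} $C$ with $(\vec{R}^\mathcal{G} \oplus C)'$ not PA over ${\bf 0}'$, and you never produce $C$. In the paper the Corollary is obtained by combining the Theorem with Proposition~\ref{P:cohfromlocks}: from the unlocked set $U = \{i : f^\mathcal{G}(i) = u\}$ (hence from $\vec{R}^\mathcal{G} \oplus f^\mathcal{G}$) one computes a cohesive set $C$ for $\vec{R}^\mathcal{G}$, and then $(\vec{R}^\mathcal{G} \oplus C)' \leq_T (\vec{R}^\mathcal{G} \oplus f^\mathcal{G})'$ finishes the argument. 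This step is short but is not implicit in anything you wrote; you should state it.

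As for the part you do prove, your route differs from the paper's in two respects. The paper works directly with $\Phi^{(\vec{\mathtt{R}} \oplus \mathtt{f})'}$ and invokes the general fact that for a computable forcing, forcing a $\Sigma^0_2$ sentence is $\Sigma^0_2$-definable; it then builds a single $\emptyset'$-computable descending sequence $p_0 \geq p_1 \geq \cdots$ with $p_x \Vdash \Phi^{(\vec{\mathtt{R}} \oplus \mathtt{f})'}(x)\converges = b_x$, and finds the needed $x$ by observing that the map $x \mapsto b_x$ cannot be $\{0,1\}$-DNC relative to $\emptyset'$. You instead first argue $Z' \equiv_T Z \oplus \emptyset'$ and then, for $\Gamma^{Z \oplus \emptyset'}$, use the recursion theorem relative to $\emptyset'$ to pin down a single $e_0$ and a single extension $q$. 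The recursion-theorem packaging and the paper's ``no self-DNC'' packaging are interchangeable, so that difference is cosmetic. The GL$_1$ reduction, however, deserves more care than you give it: in $\mathbb{P}$ a condition with a locked column determines that entire column, and recovering from $Z$ a cofinal sequence of conditions actually lying in $\mathcal{G}$ is slightly delicate (the ``lock threshold'' of a locked column is not directly readable from $Z$). The paper's approach sidesteps this by never needing to translate between $Z$ and conditions in $\mathcal{G}$: it simply cites the $\Sigma^0_2$-definability of forcing and argues entirely on the side of conditions. Your approach can be made to work, but you should either justify the GL$_1$ step more carefully for this particular forcing or bypass it as the paper does.
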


We turn to proving the theorem. In what follows, let $\vec{\mathtt{R}}$ and $\mathtt{f}$ be names in the $\mathbb{P}$ forcing language for the generic objects $\vec{R}^\mathcal{G}$ and $f^\mathcal{G}$, respectively. Let $\forces$ denote the forcing relation, as usual. We refer the reader to Shore~\cite[Chapter 3]{Shore-2016} for further background on forcing in arithmetic. The following is the main combinatorial ingredient of our proof.

\begin{lemma}\label{L:densitycombo}
	Let $\Phi$ be a $\{0,1\}$-valued Turing functional. The set of conditions $p$ such that
	\[
		p \forces (\exists x)~\Phi^{(\vec{\mathtt{R}} \oplus \mathtt{f})'}(x) \diverges~\vee~(\exists x)~\Phi^{(\vec{\mathtt{R}} \oplus \mathtt{f})'}(x) \converges = \Phi^{\emptyset'}_x(x) \converges.
	\]
	is dense in $\mathbb{P}$.
\end{lemma}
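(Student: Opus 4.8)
We are building a sufficiently generic $\vec{R}^\mathcal{G} \oplus f^\mathcal{G}$ and must diagonalize against the possibility that its jump is of PA degree over $\mathbf{0}'$. The standard way to do this is to show that for each $\{0,1\}$-valued functional $\Phi$, the set of conditions forcing that $\Phi^{(\vec{\mathtt{R}} \oplus \mathtt{f})'}$ fails to be a $\{0,1\}$-valued DNC function relative to $\mathbf{0}'$ — that is, either $\Phi^{(\vec{\mathtt{R}} \oplus \mathtt{f})'}$ is partial, or it agrees somewhere with the universal $\emptyset'$-partial-computable function $x \mapsto \Phi_x^{\emptyset'}(x)$ — is dense. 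The plan is to fix a condition $p = (\sigma_0,\dots,\sigma_{k-1},f)$ and a functional $\Phi$, and produce an extension $q \le p$ forcing one of the two disjuncts.

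First I would observe how the generic jump is forced. Since $\vec{R}^\mathcal{G}$ and $f^\mathcal{G}$ are Cohen-like generics over $\mathbb{P}$, the object $(\vec{\mathtt{R}} \oplus \mathtt{f})'$ is, from the point of view of forcing, a $\Sigma^0_1$-in-the-generic predicate, and membership in it is decided by $\mathbf{0}'$ plus the forcing data: a condition $q$ forces $n \in (\vec{\mathtt{R}} \oplus \mathtt{f})'$ iff the $n$-th program halts on some initial segment already determined by $q$, and $q$ forces $n \notin (\vec{\mathtt{R}} \oplus \mathtt{f})'$ iff \emph{no} extension of $q$ forces $n \in$, which (because of the locking mechanism) is a $\Pi^0_1(\mathbf{0}', q)$ condition on $q$ — crucially decidable by $\mathbf{0}''$, but more to the point, the set of conditions deciding any fixed $n$ is dense, and the decision made along a sufficiently generic filter is uniformly $\mathbf{0}'$-computable from the generic. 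So $(\vec{R}^\mathcal{G} \oplus f^\mathcal{G})'$ is, along the generic, of the form ``$\mathbf{0}'$ together with the generic's own local information,'' and I would make precise the sense in which asking whether $\Phi^{(\vec{\mathtt{R}} \oplus \mathtt{f})'}(x)\converges$ is itself a $\Sigma^0_1(\mathbf{0}')$ event in the forcing sense.

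Then comes the core dichotomy. Given $p$ and $\Phi$: ask (using $\mathbf{0}''$, which is fine since this is a density argument, not an effectivity claim) whether some extension $q \le p$ forces $\Phi^{(\vec{\mathtt{R}} \oplus \mathtt{f})'}(x)\converges$ for some $x$. If no such extension exists, then already $p$ forces $(\exists x)\,\Phi^{(\vec{\mathtt{R}} \oplus \mathtt{f})'}(x)\diverges$ — wait, more carefully: if no extension forces convergence at \emph{any} $x$, then $p$ itself forces divergence at, say, $x = 0$, giving the first disjunct; so we may assume some $q_0 \le p$ and some $x_0$ with $q_0 \forces \Phi^{(\vec{\mathtt{R}} \oplus \mathtt{f})'}(x_0)\converges = v$ for a definite bit $v$. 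Now the value $v$ is computed from a finite amount of jump information about $\vec{\mathtt{R}} \oplus \mathtt{f}$, which $q_0$ forces; I want to arrange that this same finite jump information is already forced by $p$ (or an extension of $p$ not yet committing the relevant columns), so that the value $v$ depends only on $x_0$ and on data available to $\emptyset'$. Then I set up $\Phi_{x_0}^{\emptyset'}$ to be exactly the $\emptyset'$-computable procedure that reconstructs $v$ from this forced jump data, via the recursion theorem / the universality of $x \mapsto \Phi_x^{\emptyset'}(x)$, so that $\Phi_{x_0}^{\emptyset'}(x_0)\converges = v = \Phi^{(\vec{\mathtt{R}} \oplus \mathtt{f})'}(x_0)$, and $q_0$ forces the second disjunct.

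The main obstacle is the middle move: showing that whatever finite jump fact $q_0$ uses to pin down $v$ can be ``read off'' by $\emptyset'$ \emph{before} we commit to $q_0$. Positive jump facts ($n \in (\vec{\mathtt{R}} \oplus \mathtt{f})'$) are $\Sigma^0_1$ and hence, once forced, are forced by a finite string extension — those are harmless, $\emptyset'$ sees them. The delicate facts are the negative ones ($n \notin (\vec{\mathtt{R}} \oplus \mathtt{f})'$): forcing $n \notin$ is genuinely $\Pi^0_1(\mathbf{0}')$ and is where the \emph{locks} do the work — locking column $i$ to value $b$ converts an open-ended Cohen column into a set whose eventual behavior is fixed, so that ``no extension forces halting'' becomes a $\Pi^0_1(\mathbf{0}')$ fact about the finite locked data rather than something requiring $\mathbf{0}''$ to verify against all future generics. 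So the structure of the argument is: refine $q_0$ to lock down exactly the columns mentioned in the (finitely many) negative jump facts used by $\Phi^{(\vec{\mathtt{R}} \oplus \mathtt{f})'}(x_0)$; once locked, these facts are $\mathbf{0}'$-decidable from the condition; feed the resulting $\mathbf{0}'$-procedure into the universal function at index $x_0$ by the recursion theorem; conclude that the extended condition forces the convergence-and-agreement disjunct. I would present exactly this as the proof, flagging that the only subtlety is the interaction between the $\Pi^0_1(\mathbf{0}')$ nature of forced non-halting and the locking mechanism that tames it.
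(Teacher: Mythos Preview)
Your dichotomy is set up incorrectly, and this is a genuine gap. You split into ``no extension of $p$ forces convergence at any $x$'' versus ``some $q_0 \le p$ forces convergence at some $x_0$.'' In the second case you have only a \emph{single} $x_0$ where convergence can be forced, and you then try to ``set up $\Phi_{x_0}^{\emptyset'}$'' to match the forced value. But $x_0$ was already chosen; the recursion theorem does not let you retroactively define what the $x_0$-th $\emptyset'$-partial-computable function does. A recursion-theorem argument would have to go the other way: define a $\emptyset'$-partial-computable $g$ that on input $e$ searches for an extension of $p$ forcing $\Phi^{(\vec{\mathtt{R}}\oplus\mathtt{f})'}(e)\converges = v$ and outputs $v$, \emph{then} take a fixed point $e_0$. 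For this to be useful you need the search to succeed at $e_0$, which requires knowing that convergence-forcing extensions are dense at \emph{every} $x$, not just at one ad hoc $x_0$.

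The paper uses exactly the right dichotomy: either some extension of $p$ forces $(\exists x)\,\Phi^{(\vec{\mathtt{R}}\oplus\mathtt{f})'}(x)\diverges$ (done), or else for \emph{every} $x$ the conditions forcing $\Phi^{(\vec{\mathtt{R}}\oplus\mathtt{f})'}(x)\converges$ are dense below $p$. In the latter case, since $\mathbb{P}$ is a computable notion of forcing, forcing the $\Sigma^0_2$ statement ``$\Phi^{(\vec{\mathtt{R}}\oplus\mathtt{f})'}(x)\converges = b$'' is itself $\Sigma^0_2$, so one can $\emptyset'$-computably build a descending chain $p = p_0 \ge p_1 \ge \cdots$ with $p_x \forces \Phi^{(\vec{\mathtt{R}}\oplus\mathtt{f})'}(x)\converges = b_x$. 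The sequence $x \mapsto b_x$ is then $\emptyset'$-computable, so it cannot be DNC relative to $\emptyset'$; hence $b_x = \Phi_x^{\emptyset'}(x)$ for some $x$, and $p_x$ is the desired extension. Note that your discussion of locks and of how negative jump facts get forced is beside the point here: the lemma uses only the abstract fact that $\mathbb{P}$ is computable, so the definability of forcing matches the complexity of the formula being forced. The locks play no special role in this particular argument.
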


\begin{proof}
	Fix a condition $p$, and suppose there is no $p' \leq p$ forcing $(\exists x)~\Phi^{(\vec{\mathtt{R}} \oplus \mathtt{f})'}(x) \diverges$. Then for each $x$, the set of conditions $p'$ forcing that $\Phi^{(\vec{\mathtt{R}} \oplus \mathtt{f})'}(x) \converges$ is dense below $p$. Now, for each $x$ and $b \in \{0,1\}$, the sentence $\Phi^{(\vec{\mathtt{R}} \oplus \mathtt{f})'}(x) \converges = b$ is $\Sigma^0_2$ in the forcing language. As $\mathbb{P}$ is a computable notion of forcing, forcing $\Sigma^0_2$ sentences is $\Sigma^0_2$-definable (see, e.g.,~\cite[Theorem 3.2.5]{Shore-2016}). Thus, given $x$, we can find a $p' \leq p$ and a $b \in \{0,1\}$ such that $p' \forces \Phi^{(\vec{\mathtt{R}} \oplus \mathtt{f})'}(x) \converges = b$ uniformly $\emptyset'$-computably. That means that computably in $\emptyset'$, we can define an infinite sequence of conditions
	\[
		p = p_0 \geq p_1 \geq \cdots
	\]
	and an infinite sequence of bits
	\[
		b_0, b_1, \ldots \in \{0,1\}
	\]
	such that $p_x \forces \Phi^{(\vec{\mathtt{R}} \oplus \mathtt{f})'}(x) \converges = b_x$ for each $x$. But then there must be an $x$ such that $\Phi^{\emptyset'}_x(x) \converges = b_x$, as otherwise $\emptyset'$ could compute a diagonally non-computable function relative to itself. Thus, $p_x$ is the desired extension of $p$ forcing $\Phi^{(\vec{\mathtt{R}} \oplus \mathtt{f})'}(x) \converges = \Phi^{\emptyset'}_x(x) \converges$.
\end{proof}

We can now prove our theorem.

\begin{proof}[Proof of Theorem~\ref{thm:generic-cohesive-not-pazp}]
	Let $\mathcal{G}$ be a sufficiently generic filter on $\mathbb{P}$. Fix any $\{0,1\}$-valued Turing functional, and suppose $\Phi^{(\vec{R}^\mathcal{G} \oplus f^\mathcal{G})'}$ is total.	By genericity, Lemma~\ref{L:densitycombo} implies there is some $p \in \mathcal{G}$ forcing $(\exists x)~\Phi^{(\vec{\mathtt{R}} \oplus \mathtt{f})'}(x) \converges = \Phi^{\emptyset'}_x(x) \converges$. Then by genericity again, we must have that $\Phi^{(\vec{R}^\mathcal{G} \oplus f^\mathcal{G})'} \converges = \Phi^{\emptyset'}_x(x)$ for some $x$. In particular, $\Phi^{(\vec{R}^\mathcal{G} \oplus f^\mathcal{G})'}$ is not diagonally non-computable relative to $\emptyset'$, hence not of PA degree relative to $\textbf{0}'$. Since $\Phi$ was arbitrary, this completes the proof.
\end{proof}

\section{Symmetric and asymmetric constructions}\label{S:symmetric}

There exist two main techniques for constructing computability-theoretically weak solutions to $\SRT^2_2$. The first comes from the original proof by Seetapun that $\RT^2_2$ does not imply $\ACA_0$ over $\RCA_0$ (see Seetapun and Slaman~\cite[Theorem 2.1]{SS-1995}). The second is due to Cholak, Jockusch, and Slaman~\cite[Section 4]{CJS-2001}. Both have found wide application in the literature (e.g.,~\cite{DJ-2009,DP-2017,CSY-2014,Dzhafarov-2015,Dzhafarov-2016,Patey-2016b,Patey-2016,MP-TA}). And while both methods use Mathias forcing with similar conditions, the combinatorial cores of the two approaches are somewhat different, and this fact is reflected in their effectivity. For computable instances, Seetapun's proof requires a $\emptyset''$ oracle, while Cholak, Jockusch, and Slaman get away with an oracle of PA degree over ${\bf 0}'$. The latter thus has a number of advantages. For example, it can be used to show that every computable instance of $\SRT^2_2$ has a low$_2$ solution (\cite[Theorem 3.7]{CJS-2001}). There is no known proof of this fact using Seetapun's combinatorics.

But Seetapun's method seems to have some advantages of its own. Hirschfeldt, Jockusch, Kjos-Hanssen, Lempp, and Slaman~\cite[Theorem 4.5]{HJKLS-2008} introduced a version of Seetapun's argument that works below $\emptyset'$. They used this to prove the following result, which answered a question of Mileti~\cite[Question 5.3.8]{Mileti-2004}.

\begin{theorem}[{\cite[Theorem 4.5]{HJKLS-2008}}]\label{T:Delta2incomplete}
	Let $A$ and $C$ be $\Delta^0_2$ sets such that $C \nTred \emptyset$. Then $\emptyset'$ computes an infinite subset $H$ of $A$ or $\overline{A}$ such that $C \nTred H$.
\end{theorem}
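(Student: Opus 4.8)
### Proof proposal for Theorem~\ref{T:Delta2incomplete}

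The plan is to run a $\emptyset'$-computable forcing argument, building $H$ by finite Mathias-style conditions $(F, X)$, where $F$ is a finite set (the stem), $X$ is an infinite $\Delta^0_2$ "reservoir" coded by a $\emptyset'$-index, and $F$ is required to lie entirely inside $A$ or entirely inside $\overline{A}$. Since $A$ is $\Delta^0_2$, the set $A$ or its complement is infinite (indeed both are, or one is cofinite — and in either case we can commit to whichever of $A, \overline{A}$ is infinite from the start, so WLOG we build $H \subseteq A$ with $A$ infinite). The generic set $H = \bigcup F$ will automatically be an infinite subset of $A$. There are two families of requirements to meet: first, $H$ must be infinite, which is the standard density requirement $R_n$: "there is an extension with $|F| \geq n$," met by adjoining the least element of the current (infinite, $\Delta^0_2$) reservoir lying in $A$; this step needs only $\emptyset'$ to locate such an element and to thin the reservoir. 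Second, for each Turing functional $\Phi$ we need the requirement $N_\Phi$: $\Phi^H \neq C$. Because $C \nTred \emptyset$ (in particular $C$ is noncomputable), it suffices to find, below any condition, an extension and a number $x$ forcing $\Phi^H(x)\!\downarrow \neq C(x)$, or forcing $\Phi^H(x)\!\uparrow$.

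The heart of the argument is the decision procedure for $N_\Phi$, carried out $\emptyset'$-computably from a condition $(F, X)$. I would ask: is there some finite $E \subseteq X$ with $E \subseteq A$ and some $x$ such that $\Phi^{F \cup E}(x)\!\downarrow$? This is a $\Sigma^0_1(\emptyset')$ question (the inner matrix "$E\subseteq A$" is $\Delta^0_2$, the convergence is $\Sigma^0_1$), hence decidable by $\emptyset'$. If the answer is no, then $\Phi^H$ is not total along this condition, and we win $N_\Phi$ by doing nothing further — any generic extending $(F,X)$ has $\Phi^H$ partial. If the answer is yes, let $v$ be the common value $\Phi^{F\cup E}(x)$, and we would like to extend to $(F \cup E, X')$ (for a suitably thinned reservoir $X' \subseteq X \setminus (\max E +1)$) provided $v \neq C(x)$. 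The problem is that $v$ might equal $C(x)$. To handle this, run the search over all pairs $(E,x)$: either some pair yields a value $v \neq C(x)$, and we take that extension; or every convergent computation $\Phi^{F\cup E}(x)$ of this form equals $C(x)$. In the latter case I claim $C \Tred \emptyset'$, in fact $C \Tred \emptyset$ once we observe this phenomenon can be arranged to happen \emph{computably} — this is exactly where the hypothesis $C \nTred \emptyset$ (rather than merely $C$ noncomputable relative to $\emptyset'$) is used, and where the argument must be set up carefully.

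To make the last case actually contradict $C \nTred \emptyset$ (and not just $C \nTred \emptyset'$, which is not assumed), I would follow the Hirschfeldt--Jockusch--Kjos-Hanssen--Lempp--Slaman device: rather than building one generic, build the reservoir so that the "bad" case produces a \emph{computable} enumeration of initial segments $\Phi^{F \cup E}(x)$ that all compute $C$ correctly on a computable infinite set of arguments $x$. Concretely, maintain the reservoir $X$ as a \emph{computable} infinite subset of $A$ "in the limit" by a movable-marker construction with $\emptyset'$ guessing, so that whenever we are stuck at requirement $N_\Phi$, there is a computable sequence of finite sets $E_x \subseteq A$ with $\Phi^{F\cup E_x}(x)\!\downarrow = C(x)$ for all $x$ in a computable infinite set. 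That array lets $\emptyset$ (not $\emptyset'$) compute $C$ on infinitely many arguments, and by a standard padding/union-of-requirements bookkeeping across all $\Phi$ and all finite committed stems, one upgrades this to $C \Tred \emptyset$ outright, contradicting the hypothesis. Hence the stuck case never occurs, every $N_\Phi$ is satisfiable by a $\emptyset'$-effective extension, and the whole construction runs computably in $\emptyset'$, yielding the desired $H$.

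The main obstacle is precisely this last point: extracting a genuine computation of $C$ from $\emptyset$ (not from $\emptyset'$) in the stuck case. A naive forcing argument only shows $\Phi^H$ can be made to differ from $C$ unless $\emptyset'$ computes $C$; converting the oracle down to $\emptyset$ requires the Seetapun-style trick of keeping the reservoir computably approximable and arranging the diagonalization witnesses to be computably listable, which is the technical crux of \cite[Theorem 4.5]{HJKLS-2008}.
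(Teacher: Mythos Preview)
The paper does not prove Theorem~\ref{T:Delta2incomplete} directly; it is quoted from \cite{HJKLS-2008}. What the paper does is (i) describe the structure of the original proof, and (ii) give a strengthening (Theorem~\ref{Thm:newthmatthelastminute}) whose proof follows the same asymmetric template. Comparing your proposal against that template, there is a genuine gap.

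The fatal step is your first move: ``WLOG we build $H \subseteq A$ with $A$ infinite.'' You cannot commit to one side at the outset. The conclusion of the theorem is an essential disjunction, and which side $H$ ends up in depends on properties of $A$ that are not $\emptyset'$-decidable. The original HJKLS proof, as the paper notes, splits into cases according to whether $A$ is hyperimmune: if it is, one builds $H \subseteq \overline{A}$ (in fact low); if not, one builds $H \subseteq A$ (merely incomplete $\Delta^0_2$). The paper's own asymmetric proof of the stronger Theorem~\ref{Thm:newthmatthelastminute} has the same two-case shape, with a more refined case split (``Case 1'' vs.\ ``Case 2'' based on an essentiality condition). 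In both arguments, the side is determined by a global $\Sigma^0_3$ dichotomy, not chosen in advance.

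This is not a cosmetic issue; it is exactly why your ``stuck case'' cannot be completed. You want, when every convergent $\Phi^{F\cup E}(x)$ with $E \subseteq X \cap A$ agrees with $C(x)$, to produce a \emph{computable} array of finite sets $E_x \subseteq A$ witnessing this. But $A$ is only $\Delta^0_2$, so there is no way to computably certify $E_x \subseteq A$. The actual mechanism is two-sided: when the $\overline{A}$-side is stuck at $\Phi$, it is because every computable array of candidate extensions meets $A$; those intersections with $A$ are what feed the backup construction on the $A$-side. Your sketch gestures at ``Seetapun-style tricks'' and ``movable markers,'' but once you have fixed the side in the first paragraph, there is nothing for those tricks to act on. The missing idea is precisely the asymmetric backup construction that the paper highlights as ``an essential key.''
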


\noindent In particular, every computable instance of $\D^2_2$ (and hence of $\SRT^2_2$) has an incomplete $\Delta^0_2$ solution. This is the first example of an upper bound on the strength of $\SRT^2_2$ that is not known to be provable with the Cholak, Jockusch, and Slaman technology. An essential key here is to use an \emph{asymmetric} proof: one construction that fully builds a homogeneous set of color $0$, and a separate backup construction that builds a homogeneous set of color $1$. The original construction of Seetapun, as well as the construction of Cholak, Jockusch, and Slaman, are both symmetric: they build the two homogeneous sets together, playing one off against the other.

In this section, we present one symmetric and one asymmetric construction of a homogeneous set, somewhat more directly comparing the strengths of the two. We begin with a symmetric argument in the style of Cholak, Jockusch, and Slaman.

\begin{theorem}\label{thm:2-generic-computes-incomplete-D22}
Let $A$ and $C$ be $\Delta^0_2$ sets such that $C \not \Tred \emptyset$.
If $G$ is any $2$-generic set, then $G \oplus \emptyset'$ computes an infinite subset $H$ of $A$ or $\overline{A}$ such that $C \nTred H$.
\end{theorem}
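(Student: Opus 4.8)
The plan is to carry out a standard $2$-generic forcing argument to build the homogeneous set, working with $\emptyset'$ as an oracle so that the whole construction takes place below $G\oplus\emptyset'$. Since $A$ and $C$ are $\Delta^0_2$, fix $\emptyset'$-computable approximations to them. The forcing notion is Mathias forcing with conditions $(E,I)$ where $E$ is a finite set, $I$ is an infinite $\Delta^0_2$ (in fact I will take $I=\omega$ or a cofinite set, or more carefully an $\emptyset'$-computable set) reservoir, and $\max E < \min I$. A condition forces ``$H\subseteq A$'' or ``$H\subseteq\overline A$'' once we have committed $E$ and $I$ entirely to one side; but to keep symmetry we run the construction by deciding, at each step, which of $A$ or $\overline A$ will contain $H$ only in the limit. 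This is the symmetric strategy of Cholak--Jockusch--Slaman: we maintain two candidate reservoirs $I\cap A$ and $I\cap\overline A$, one of which is infinite by the stability/limit-lemma considerations — actually here $A$ is merely $\Delta^0_2$, not the $\lim$ of a stable coloring, but that is exactly what makes this a $\D^2_2$-style statement — and we play one side against the other. Genericity of $G$ is used to drive the extension of the finite part $E$.

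The requirements are of three kinds. First, infinity requirements $\mathcal{N}_e$: $|H|\geq e$; these are met because at each stage the relevant candidate reservoir remains infinite, so a suitable condition extending ours adds a new element, and by $2$-genericity $G$ meets the dense (in fact $\Sigma^0_2$, which is why we need $2$-genericity rather than $1$-genericity) set of conditions doing so. Second, the ``pick a side'' book-keeping: we ensure that $H$ ends up entirely inside $A$ or entirely inside $\overline A$, by a finite-injury-like commitment that, once one side's reservoir is seen to be finite (an $\emptyset''$ — no, a $\Sigma^0_2$ — event, detectable by $G\oplus\emptyset'$ via genericity), we lock onto the other. Third, and most important, the incompleteness requirements $\mathcal{R}_e$: $\Phi_e^H \neq C$. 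Here is where the argument really lives. Given a condition $(E,I)$ and an index $e$, we ask whether there is an extension $(E',I')$ and an $x$ forcing $\Phi_e^{H}(x)\!\downarrow\, \neq C(x)$. If yes, we take it (and $G$ meets this dense set). If no — i.e. the condition already ``forces'' that $\Phi_e^H$ agrees with $C$ wherever it converges along every extension — then we argue that $C$ would be computable: to decide $C(x)$, search $\emptyset'$-computably for any finite $E''\supseteq E$ with $E''\setminus E\subseteq I$ and $\Phi_e^{E''}(x)\!\downarrow$; the answer must equal $C(x)$. The subtle point is that such an $E''$ must exist for cofinitely many $x$ if $\Phi_e^H$ is to be total — and totality of $\Phi_e^H$ for the actual generic $H$ is all we need to worry about, since if it is partial the requirement is met trivially. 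So the real work is showing: if no extension forces a disagreement, then $C\leq_T\emptyset'$ with the $\emptyset'$-reduction actually being computable in an appropriate sense, contradicting $C\nleq_T\emptyset$ — wait, we only get $C\leq_T\emptyset'$, which is no contradiction since $C$ is $\Delta^0_2$.

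So the delicate fix, and the step I expect to be the main obstacle, is arranging the oracle bookkeeping so that the non-diagonalizable case yields $C\leq_T\emptyset$, not merely $C\leq_T\emptyset'$. The trick (as in Theorem~\ref{T:Delta2incomplete} and its relatives) is that the reservoir $I$, and the search for witnesses $E''$, should be kept \emph{computable}, not $\emptyset'$-computable: one runs a $\emptyset'$-oracle construction of the \emph{conditions} but maintains the invariant that each reservoir $I$ in play is an infinite computable set, and that the set of ``potential future elements'' from which $\Phi_e^H$ must be computed is computably enumerable given the (finitely much) data decided so far. Then ``no extension forces a disagreement on $x$'' becomes a genuinely computable reduction $x\mapsto C(x)$: enumerate finite valid $E''$, and output $\Phi_e^{E''}(x)$ for the first one that halts. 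Totality of $\Phi_e^H$ (for the generic $H\supseteq E$ with $H\setminus E\subseteq I$) guarantees that for every $x$ such an $E''$ is found, by a density/genericity argument. This contradicts $C\nleq_T\emptyset$, so some extension does force a disagreement, and $2$-genericity of $G$ gives us the extension inside the generic filter. Carrying all three families of requirements simultaneously, with the side-commitment injuring only finitely often and the reservoirs shrinking to stay computable and consistent with the committed side, completes the construction; then $H=\bigcup\{E : (E,I)\in\mathcal{G}\}$ is the desired infinite subset of $A$ or $\overline A$ with $C\nleq_T H$, and $H\leq_T G\oplus\emptyset'$ by construction.
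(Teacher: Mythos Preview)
Your proposal has a genuine gap at the heart of the diagonalization step, and it also misidentifies the role of the $2$-generic $G$.

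First, the diagonalization. You want to argue: if no extension of $(E,I)$ forces $\Phi_e^{H}(x)\neq C(x)$, then $C$ is computable via ``enumerate finite valid $E''\supseteq E$ with $E''\setminus E\subseteq I$, output $\Phi_e^{E''}(x)$''. But ``valid extension'' here must mean $E''\setminus E\subseteq I\cap A$ (or $I\cap\overline A$), since $H$ is to lie inside one side. Keeping $I$ computable does not help: $A$ is only $\Delta^0_2$, so you cannot \emph{computably} enumerate finite subsets of $I\cap A$. If instead you enumerate arbitrary $E''\subseteq I$, the hypothesis ``no valid extension forces disagreement'' says nothing about such $E''$, and there is no reason $\Phi_e^{E''}(x)$ should equal $C(x)$. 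This is precisely the obstruction that the paper's machinery is designed to overcome: one passes to a $\Pi^0_1$ class $\mathcal D$ of $2$-partitions $B_0\sqcup B_1$ of the reservoir on which $\Phi_{e_i}^{F_i\cup(\cdot)}$ is \emph{stable} (any two extensions agree). A member $B_0\oplus B_1$ of $\mathcal D$ is chosen inside a low-coded Scott ideal $\mathcal M$ that does not compute $C$ (Lawton's basis theorem); stability on an infinite $B_i$ would then make $C\leq_T B_i\in\mathcal M$, a contradiction, so one can force divergence along $B_i$. None of this appears in your sketch.

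Second, the $2$-generic $G$ is not a generic filter for your Mathias forcing. In the paper, $G$ is used \emph{only} to extract (via its columns) a $G\oplus\emptyset'$-computable sequence of functions $f_0,f_1,\ldots$ with each $f_i$ hyperimmune relative to $\emptyset'\oplus\bigoplus_{j\neq i}f_j$. The construction itself is a finite-injury priority argument with movable markers, computable in $\emptyset'$ together with these $f_i$. The function $f_{\langle e_0,e_1\rangle}$ is consulted finitely often, to guess whether the chosen $B_i$ above is finite; hyperimmunity relative to the rest of the construction guarantees the guess is eventually correct. Your proposal to ``meet dense sets via $2$-genericity of $G$'' does not connect to any of this: the relevant dense sets live in a forcing whose conditions involve lowness indices for members of $\mathcal M$, not in Cohen forcing, and $G$ is generic for the wrong notion.
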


\noindent While this is only a slightly weaker version of Theorem~\ref{T:Delta2incomplete}, the underlying combinatorics of the proof are new. The main takeaway is that there are still unexplored aspects of this technique that can be exploited to (at least partially) reprove results that could previously only be obtained by more ad-hoc methods.


First, we recall the following results, which we will make use of.

\begin{lemma}[{Lawton, see~\cite[Theorem 4.1]{HJKLS-2008}}]\label{thm:low-incomplete-basis-theorem}
Let $\Ccal \subseteq 2^\omega$ be a non-empty $\Pi^0_1$ class,
and let $C_0, C_1, \dots >_T 0$ be uniformly $\Delta^0_2$.
Then $\Ccal$ has a low member $P$ such that $\forall i(C_i \not \Tred P)$.
\end{lemma}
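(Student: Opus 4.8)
The plan is to run a forcing-with-$\Pi^0_1$-classes construction relative to $\emptyset'$, in the style of the Jockusch--Soare low basis theorem, augmented with diagonalization requirements against each $C_i$. We build a decreasing sequence $\Ccal = \Ccal_0 \supseteq \Ccal_1 \supseteq \cdots$ of nonempty $\Pi^0_1$ classes, where at each stage $s$ we maintain an honest (computable) index for $\Ccal_s$: each $\Ccal_s$ will be obtained from $\Ccal$ by intersecting with finitely many clopen sets and finitely many $\Pi^0_1$ sets of the form $\{X : \Phi_e^X(e)\diverges\}$ or $\{X : \neg(\Phi_j^X(n)\converges = v)\}$, so that although $\emptyset'$ is needed to decide \emph{which} restrictions to impose, $\Ccal_s$ itself always has a computable index as a function of the finite data chosen so far. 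We only pass to a subclass when it is nonempty, so every $\Ccal_s$ is nonempty, and we take $P$ to be any member of $\bigcap_s \Ccal_s$, which is nonempty by compactness; in particular $P \in \Ccal_0 = \Ccal$. The requirements are $\mathcal{L}_e$ (for each $e$): decide whether $\Phi_e^P(e)\converges$; and $\mathcal{R}_{i,j}$ (for each pair $\langle i,j\rangle$): ensure $\Phi_j^P \neq C_i$.

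For $\mathcal{L}_e$ we do the usual thing: using $\emptyset'$, test whether $\Ccal_s \cap \{X : \Phi_e^X(e)\diverges\}$ is nonempty; if so, let $\Ccal_{s+1}$ be this $\Pi^0_1$ subclass, and if not, let $\Ccal_{s+1} = \Ccal_s$ and note that every member of $\Ccal_s$, hence $P$, halts on input $e$. Since $\emptyset'$ records which outcome occurred at each such stage, the function $e \mapsto P'(e)$ is $\emptyset'$-computable, so $P$ is low.

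For $\mathcal{R}_{i,j}$, at a stage with current class $\Ccal_s$: using $\emptyset'$, which can compute $C_i$ uniformly since the $C_i$ are uniformly $\Delta^0_2$, search for some $n$ such that $\Ccal_s \cap \{X : \neg(\Phi_j^X(n)\converges = C_i(n))\}$ is nonempty. This set is $\Pi^0_1$ with index computable from the index of $\Ccal_s$, from $j$ and $n$, and from the single bit $C_i(n)$, so $\emptyset'$ can test its nonemptiness. If such $n$ is found, let $\Ccal_{s+1}$ be this subclass; then every $X \in \Ccal_{s+1}$, in particular $P$, has $\Phi_j^X(n)\diverges$ or $\Phi_j^X(n)\converges \neq C_i(n)$, so $\Phi_j^P \neq C_i$ and the requirement is met permanently. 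To see that the search terminates, note that its failure means $\Ccal_s \subseteq \{X : \Phi_j^X(n)\converges = C_i(n)\}$ for \emph{every} $n$; then, using the computable index we maintain for $\Ccal_s$ together with compactness, one computes $C_i$ outright: to get $C_i(n)$, search for a stage $t$ at which every string alive in the stage-$t$ approximation of $\Ccal_s$ already computes a common value for $\Phi_j^\bullet(n)$; such $t$ exists since $\{X : \Phi_j^X(n)\converges = C_i(n)\}$ is open and covers the compact class $\Ccal_s$, and the common value must equal $C_i(n)$ because any fixed member of $\Ccal_s$ witnesses it. This contradicts the hypothesis that $C_i \nTred \emptyset$, so the search always succeeds.

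The heart of the argument, and the main obstacle, is exactly this last point: with only $\emptyset'$ we cannot decide $\Pi^0_2$ facts such as ``some subclass forces $\Phi_j(n)$ to diverge,'' so we cannot diagonalize by forcing divergence; instead we diagonalize \emph{toward a value} $C_i(n)$, and arrange that the failure case collapses to genuine computability of $C_i$. This is what dictates the bookkeeping discipline of restricting only by clopen sets and by fixed $\Pi^0_1$ sets specified by finite data, so that each $\Ccal_s$ keeps a computable index and the compactness-plus-index computation of $C_i$ is legitimate (rather than merely $\emptyset'$-computing $C_i$, which would be no contradiction). Everything else is routine: nonemptiness of $\bigcap_s \Ccal_s$, the verification that $P$ is low, and the verification that $C_i \nTred P$ for each $i$ — for if $C_i = \Phi_j^P$ then $\mathcal{R}_{i,j}$ was never actually satisfied, which we have just shown is impossible.
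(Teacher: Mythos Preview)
Your proof is correct and follows the standard approach: interleave the Jockusch--Soare low-basis strategy with cone-avoidance steps, using the key observation that if no diagonalization witness $n$ exists for $\mathcal{R}_{i,j}$, then the fixed computable index for $\Ccal_s$ lets you outright \emph{compute} $C_i$ (not merely $\emptyset'$-compute it), contradicting $C_i >_T \emptyset$. Your emphasis on maintaining a genuine computable index for each $\Ccal_s$, even though the sequence of indices is only $\emptyset'$-computable, is exactly the point that makes this work.

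Note that the paper does not give its own proof of this lemma; it simply cites it as Lawton's theorem from \cite[Theorem 4.1]{HJKLS-2008}. Your argument is essentially the proof given there. One minor remark on presentation: your compactness step (``search for a stage $t$ at which every string alive \ldots\ computes a common value'') is cleaner when phrased as: search for $v \in \{0,1\}$ such that the $\Pi^0_1$ class $\Ccal_s \cap \{X : \neg(\Phi_j^X(n)\converges = v)\}$ is empty; emptiness is $\Sigma^0_1$ uniformly in the index of $\Ccal_s$, so this search is computable, and nonemptiness of $\Ccal_s$ guarantees the value found is $C_i(n)$. This avoids any ambiguity about ``alive'' nodes versus extendible nodes.
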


\begin{lemma}[{Simpson~\cite[Lemma~VIII.2.9]{Simpson-2009}}]\label{thm:pa-degree-bounds-countably-coded-scott}
  Every set $X$ of PA degree computes a countably coded Scott set, i.e., a countable sequence $\seq{Z_0,Z_1,\ldots}$ such that $\{Z_0,Z_1,\ldots\}$ is an $\omega$-model of $\WKL$.
\end{lemma}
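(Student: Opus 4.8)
The statement to prove is Lemma~\ref{thm:pa-degree-bounds-countably-coded-scott}: every set $X$ of PA degree computes a countably coded Scott set, i.e.\ a countable sequence $\seq{Z_0,Z_1,\ldots}$ such that $\{Z_0,Z_1,\ldots\}$ is an $\omega$-model of $\WKL$. Since this is quoted from Simpson~\cite[Lemma VIII.2.9]{Simpson-2009}, the intended ``proof'' is really a pointer to that reference together with a brief sketch of the standard argument. I would therefore present a short proof recalling the two key facts about PA degrees and how they combine, rather than redeveloping the theory of Scott sets from scratch.

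The plan is as follows. First, recall the characterization of PA degrees that makes them closed under the operation needed: if $\mathbf{a}$ is a PA degree, then for every infinite computable (indeed, every infinite $\mathbf{a}$-computable, by relativization) binary tree $T$, there is a path through $T$ of degree $\leq \mathbf{a}$; more precisely, one can compute such a path from $X$ uniformly in an index for $T$. This is the basis-property form of the definition of PA degree (equivalently, $X$ computes a $\{0,1\}$-valued DNC function, equivalently $X$ computes a consistent completion of $\mathsf{PA}$, which is where the name comes from). Second, recall that PA degrees are closed upward and, crucially, that if $X$ has PA degree then so does $X \oplus Y$ for any $Y \leq_T X$; more to the point, the uniformity in the first fact lets $X$ pick out paths through trees that are themselves computed from sets $X$ has already committed to.

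The construction is then the usual bootstrapping. Enumerate all Turing functionals, and build the sequence $\seq{Z_0, Z_1, \ldots}$ in stages, all uniformly from $X$. At stage $\seq{e, n}$, consider the functional $\Phi_e$ applied to (a finite join of some of the) $Z_i$'s already constructed, say to $W = Z_{i_0} \oplus \cdots \oplus Z_{i_{k-1}}$; if $\Phi_e^{W}$ describes an infinite binary tree $T \subseteq 2^{<\omega}$, then since $W \leq_T X$ and $X$ has PA degree, $X$ uniformly computes a path $P$ through $T$, and we set $Z_{\seq{e,n}} = P$. (If $\Phi_e^W$ is not a total function, or does not code an infinite tree, we set $Z_{\seq{e,n}} = \emptyset$, or handle the partiality by a standard finite-extension/padding trick so that the whole sequence remains $X$-computable.) By dovetailing over all $e$ and all finite tuples of indices, we ensure: the sequence is uniformly $X$-computable; it is closed under $\oplus$ and under $\leq_T$; and for every infinite binary tree coded by a join of members of the sequence, some member is a path through it. Those three closure properties are exactly what it means for $\{Z_0, Z_1, \ldots\}$ to be (the second-order part of) an $\omega$-model of $\WKL$: Turing-ideal closure gives the $\RCA_0$ axioms and the path-existence clause gives $\WKL$ itself.

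\textbf{Main obstacle.} There is no deep obstacle here --- the content is entirely standard --- but the one point that needs care is \emph{uniformity and totality management}: when $\Phi_e$ is applied to a join $W$ of previously-constructed $Z_i$'s, $\Phi_e^W$ need not be total, so one must have a fixed convention (e.g.\ build $Z_{\seq{e,n}}$ bit by bit, extending arbitrarily whenever the tree-coding functional fails to respond) that keeps the entire sequence $\seq{Z_0,Z_1,\ldots}$ honestly $X$-computable while still guaranteeing that \emph{every} genuinely-coded infinite tree gets a path. The bookkeeping that every finite tuple of indices is eventually revisited (so that trees coded from later members of the sequence are also handled) is the other routine-but-essential ingredient. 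For the formal details, I would simply refer to Simpson~\cite[Lemma VIII.2.9]{Simpson-2009}.
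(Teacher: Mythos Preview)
Your proposal is correct and in fact goes further than the paper does: the paper gives no proof at all for this lemma, simply stating it with the citation to Simpson~\cite[Lemma~VIII.2.9]{Simpson-2009}. Your sketch of the standard bootstrapping construction is accurate and your identification of the uniformity/totality bookkeeping as the only delicate point is apt, so there is nothing to correct.
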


We will also need the following simple but useful fact about genericity. For completeness, we recall that a function $g : \omega \to \omega$ is \emph{hyperimmune relative to a set $Z$}, or \emph{$Z$-hyperimmune}, if for every $Z$-computable function $f$ there is an $n$ such that $f(n) < g(n)$.

\begin{lemma}\label{thm:2-generic-to-hyperimmune-functions}
Suppose that $G$ is $2$-generic. Then $G \oplus \emptyset'$ uniformly computes
a countable sequence of functions $f_0, f_1, \dots$ such that for every $i \in \omega$, the function
$f_i$ is hyperimmune relative to $\emptyset' \oplus \bigoplus_{j \neq i} f_j$.
\end{lemma}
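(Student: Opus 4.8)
The plan is to extract the functions $f_i$ from the columns of a single $2$-generic set $G$, using a standard fact that a $1$-generic column is hyperimmune relative to the join of the other columns, and then boosting to $G \oplus \emptyset'$-computability by a uniform trick. Write $G = \bigoplus_i G_i$, where $G_i = \{ n : \langle i, n \rangle \in G \}$ is the $i$-th column. For each $i$, let $g_i : \omega \to \omega$ be the function enumerating, in increasing order, the positions of the $1$'s in $G_i$ (so $g_i(n)$ is the $(n{+}1)$-st element of $G_i$, which is total for generic $G$ since each column is infinite and coinfinite). Then define $f_i$ from $G \oplus \emptyset'$ as follows: $f_i(n)$ is computed by searching, using the oracle $\emptyset' \oplus \bigoplus_{j \neq i} G_j$ to enumerate all $\bigl(\emptyset' \oplus \bigoplus_{j\neq i} G_j\bigr)$-partial-computable functions, for a stage witnessing that the $n$-th such function $\psi_n$ either diverges on input $n$ or converges to a value, and then setting $f_i(n) = g_i(m)$ for the first $m$ with $g_i(m) > \psi_n(n)$ if it converges (and $f_i(n) = g_i(n)$ otherwise). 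Crucially, all of this is uniform in $i$, and uses only $G \oplus \emptyset'$ as oracle.

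The key step is the genericity argument showing $f_i$ is hyperimmune relative to $Z_i := \emptyset' \oplus \bigoplus_{j \neq i} f_j$. First, note $Z_i$ is computable from $\emptyset' \oplus \bigoplus_{j \neq i} G_j =: W_i$, since each $f_j$ for $j \neq i$ is $W_i$-computable by the construction above (the oracle $\emptyset' \oplus \bigoplus_{k \neq j} G_k$ used to build $f_j$ is itself $W_i$-computable when $j \neq i$, as it involves only $\emptyset'$ and columns other than $j$, one of which might be $G_i$ — so I should be slightly careful: actually $\bigoplus_{k\neq j}G_k$ includes $G_i$, hence is \emph{not} $W_i$-computable. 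The fix is to define $f_j$ using oracle $\emptyset' \oplus \bigoplus_{k} G_k = G \oplus \emptyset'$ for all the diagonalization bookkeeping but only \emph{diagonalizing against} $\emptyset'\oplus\bigoplus_{k\neq j}G_k$-computable functions; then $f_j$ is $G\oplus\emptyset'$-computable as required, and the hyperimmunity claim for $f_i$ is against $\emptyset'\oplus\bigoplus_{j\neq i}f_j$-computable functions, each of which is in particular $\emptyset' \oplus \bigoplus_{j\neq i}(G\oplus\emptyset')$-computable — still not obviously weaker than $G$.) The cleanest route: diagonalize $f_i$ directly against all $\bigl(\emptyset' \oplus \bigoplus_{j\neq i} f_j\bigr)$-partial-computable functions is impossible a priori since the $f_j$ aren't yet defined; instead diagonalize against all $(\emptyset' \oplus H)$-partial computable functions as $H$ ranges over $\bigoplus_{j\neq i}G_j$, which suffices because $\bigoplus_{j\neq i} f_j \leq_T \emptyset' \oplus \bigoplus_{j\neq i} G_j$ (verify: $f_j$ for $j\neq i$ only needs $\emptyset'$ plus columns, and we can afford column $G_i$ to be absent by further insisting $f_j$'s construction for $j \neq i$ avoids $G_i$ — reorganize so $f_j$ uses oracle $\emptyset' \oplus \bigoplus_{k \ne i, j}G_k \oplus G_j$, still $G\oplus\emptyset'$-computable overall, and now $\leq_T \emptyset'\oplus\bigoplus_{k\neq i}G_k$). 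With that bookkeeping settled, hyperimmunity follows: given any $\bigl(\emptyset'\oplus\bigoplus_{j\neq i}G_j\bigr)$-computable $f = \psi_n$, the set of $G$ for which $\exists m\, (f_i(m) > f(m))$ is dense along the $i$-th column (for any finite condition on $G_i$ we can extend it to place a $1$ far out past $\psi_n(n)$), and it is $\Sigma^0_2$-definable from $\emptyset'$ since $\psi_n$ is $\emptyset'$-computable relative to the fixed other columns — hence hit by the $2$-generic $G$.

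I would organize the write-up as: (1) define the columns $G_i$ and the auxiliary functions $g_i$; (2) define $f_i$ uniformly from $G \oplus \emptyset'$ via the diagonalization-with-growth construction, being careful that the construction of $f_j$ for $j \ne i$ does not consult the column $G_i$, so that $\bigoplus_{j \ne i} f_j \leq_T \emptyset' \oplus \bigoplus_{j \ne i} G_j$; (3) observe totality and $G \oplus \emptyset'$-uniformity; (4) prove the hyperimmunity via a density argument, noting that the relevant requirement for defeating the $n$-th $Z_i$-computable function is a $\Sigma^0_2(\emptyset') = \Sigma^0_3$ condition on $G$ — actually, since $\emptyset'$ is already in the oracle and $2$-genericity decides $\Sigma^0_2$ facts about $G$, and convergence of an $\emptyset'$-computable functional is $\Sigma^0_1(\emptyset') \subseteq \Sigma^0_2$, the density set is $\Sigma^0_2$ and $2$-genericity suffices.

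The main obstacle I anticipate is precisely the bookkeeping in step (2): arranging the $2^{\aleph_0}$-many simultaneous hyperimmunity requirements — each $f_i$ must dominate infinitely often \emph{every} function computable from $\emptyset'$ together with \emph{all other} $f_j$'s — without circularity, while keeping everything uniformly $G \oplus \emptyset'$-computable. The resolution is to notice the requirements are "one-directional" in a usable sense: $f_i$ only needs to beat things computed from lower-complexity data ($\emptyset'$ plus the \emph{raw} columns $G_j$, $j \ne i$, which majorize the $f_j$'s in Turing degree), so we can build all $f_i$ in one pass, each diagonalizing against a fixed $\emptyset'$-effective list of functionals applied to the other raw columns, and then verify post hoc that $\bigoplus_{j\neq i}f_j$ is reducible to that data. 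No genuinely hard mathematics is involved beyond the textbook fact that a column of a $2$-generic is hyperimmune over the rest together with any fixed oracle whose jump-complexity the genericity outstrips; the content of the lemma is the \emph{uniformity} and the \emph{simultaneity}, which the above organization delivers.
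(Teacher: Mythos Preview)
Your proposal is substantially more complicated than it needs to be, and in fact contains a gap. The paper's proof is a one-liner: take $f_i = p_{G_i}$, the principal function of the $i$-th column of $G$ (exactly your $g_i$). This sequence is uniformly computable from $G$ alone. Since a set and its principal function are Turing equivalent, $\bigoplus_{j \neq i} f_j \equiv_T \bigoplus_{j \neq i} G_j$, so the hyperimmunity requirement is that $p_{G_i}$ be hyperimmune relative to $\emptyset' \oplus \bigoplus_{j \neq i} G_j$. That is precisely the ``textbook fact'' you cite at the end: a column of a $2$-generic is $1$-generic relative to $\emptyset'$ joined with the remaining columns (because $\Sigma^0_1(\emptyset' \oplus \bigoplus_{j \neq i} G_j)$ sets of strings are $\Sigma^0_2$ in the parameters, which $2$-genericity of $G$ decides), and hence its principal function is hyperimmune relative to that oracle. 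No additional diagonalization layer is needed, and the circularity you wrestle with never arises, because $f_j$ is literally (Turing-equivalent to) $G_j$.

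The specific gap in your construction: you propose to compute $f_i(n)$ by ``searching \ldots\ for a stage witnessing that $\psi_n$ either diverges on input $n$ or converges to a value,'' where $\psi_n$ is partial computable relative to $\emptyset' \oplus \bigoplus_{j \neq i} G_j$. Divergence of such a computation is $\Pi^0_1$ relative to that oracle, i.e., $\Pi^0_2$ in $G$; it cannot be witnessed at any finite stage, and deciding it is beyond $G \oplus \emptyset'$. So your $f_i$ as described is not $G \oplus \emptyset'$-computable. The fix is simply to delete the diagonalization and set $f_i := g_i$.
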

\begin{proof}
  Write $G$ as $\bigoplus_{i \in \omega} G_i$, and for each $i$, let $f_i = p_{G_i}$, the principal function of $G_i$. By $2$-genericity,
  each $f_i = p_{G_i}$ is hyperimmune relative to $\emptyset' \oplus \bigoplus_{j \neq i} p_{G_j} = \emptyset' \oplus \bigoplus_{j \neq i} f_j$, as desired.
\end{proof}


We now come to the proof of the theorem, which is a priority construction. For partial functions $f$ and $g$, we shall write $f(x) \simeq g(x)$ to mean that $f(x) = g(x)$ if both $f(x)$ and $g(x)$ are defined. We shall also follow the convention that if a Turing functional converges on some input using a finite oracle $F$, then its use is bounded by $\max F$.

\begin{proof}[Proof of Theorem~\ref{thm:2-generic-computes-incomplete-D22}]
Fix $A$, $C$, and $G$, and let $A_0 = \overline{A}$ and $A_1 = A$.
By Lemmas~\ref{thm:low-incomplete-basis-theorem} and ~\ref{thm:pa-degree-bounds-countably-coded-scott}, there is a countable Turing ideal $\Mcal = \{ Z_0, Z_1, \dots \}$ as follows:
\begin{itemize}
	\item $\Mcal \models \wkl$;
	\item $\bigoplus_i Z_i$ is low;
	\item $C \not \Tred \bigoplus_i Z_i$.
\end{itemize}
We may assume neither $A_0$ nor $A_1$ has any infinite subset in $\mathcal{M}$, since otherwise we can take this subset and be done.

Recall that a lowness index for a low set $X$ is a natural number $e$ such that $\Phi_e^{\emptyset'} = X'$. In what follows, we write ``$X$ is (a lowness index for) a set'', etc., to mean that $X$ is a low set specified by a lowness index. In this way, we identify a given lowness index for $X$ with $X$ itself.

\bigskip
\noindent \textbf{Conditions.} A \emph{condition} is a tuple $(F_0, F_1, X)$, where $F_0 \subseteq \overline{A}$ and $F_1 \subseteq A$ are finite sets,
$X$ is (a lowness index for) an element of $\Mcal$, and $\max F_0, F_1 < \min X$. A condition $(E_0,E_1,Y)$ \emph{extends} $(F_0,F_1,X)$, written $(E_0,E_1,Y) \leq (F_0,F_1,X)$, if $F_i \subseteq E_i$ and $\min (E_i \setminus F_i) > \max F_i$, for each $i < 2$.

Note that unlike Mathias conditions, we do not demand the reservoir $X$ to be infinite, or that an extension only add new elements to the finite initial segments from the reservoir, or that $Y \subseteq X$ in the definition of extension. It may seem from this definition that we do not need the reservoirs, but their role will become apparent in the construction. We use the terms ``reservoir'' and ``condition'' here by analogy with Mathias forcing, even though our argument is not a forcing construction.

We will build an infinite $G \oplus \emptyset'$-computable sequence of conditions
\[
	(F_{0,0}, F_{1,0}, X_0) \geq (F_{0,1}, F_{1,1}, X_1) \geq \cdots \geq (F_{0,s}, F_{1,s}, X_s) \geq \cdots.
\]
We then let $H_0 = \bigcup_s F_{0,s}$ and $H_1 = \bigcup_s F_{1, s}$. We aim to satisfy the following requirements: for all $e_0, e_1 \in \omega$,
$$
\Rcal_{e_0, e_1}: \Phi_{e_0}^{H_0} \neq C~\vee~\Phi_{e_1}^{H_1} \neq C;
$$
and for all $n \in \omega$,
\[
	\Scal_n : |H_0| > n \wedge |H_1| > n.
\]
Thus, $H_0$ and $H_1$ will be infinite subsets of $A_0$ and $A_1$, respectively, and at least one $H_i$ will not compute $C$. The requirements will be satisfied by a finite injury priority argument, with a moveable marker procedure. The requirements are given the usual order.

\bigskip
\noindent \textbf{Hyperimmune functions.} Let $f_0, f_1, \dots$ be the $G \oplus \emptyset'$-computable functions given by Lemma~\ref{thm:2-generic-to-hyperimmune-functions}. Thus, each $f_i$ is hyperimmune relative to $\emptyset' \oplus \bigoplus_{j \neq i} f_j$. To each requirement $\Rcal_{e_0, e_1}$, we associate $f_{\langle e_0, e_1\rangle}$. Each function will be called finitely often in the construction of the sequence of conditions, and therefore the sequence will be $\emptyset' \oplus \bigoplus_{j \neq \langle e_0, e_1 \rangle} f_j$-computable. In particular, $f_{\langle e_0, e_1 \rangle}$ will be hyperimmune relative to this sequence. 

\bigskip
\noindent \textbf{Reverting the reservoir.} At a stage $s$, we may need to \emph{revert the reservoir}. This means that we look for the largest $t < s$ such that $X_t \cap (s, \infty) \neq \emptyset$. Such a $t$ must exist because we will have $X_0 = \omega$. We then set $(F_{0,s+1}, F_{1,s+1}, X_{s+1}) = (F_{0,s}, F_{1,s}, X_t \cap (\max F_{0,s} \cup F_{1,s},\infty))$. In this case, we also say that the reservoir was \emph{reverted to stage $t$ at stage $s$}. Note that the finite initial segments $F_{0,s}$ and $F_{1,s}$ are unchanged. 

\bigskip
\noindent \textbf{Movable marker procedure.} To each requirement $\Rcal_{e_0, e_1}$,
we shall associate a marker $m_{e_0, e_1} \in \omega$, which represents a stage at which the reservoir $X_{m_{e_0,e_1}}$ is infinite, and after which no requirement of higher priority requires attention (to be defined below). Initially, $m_{e_0, e_1} = 0$. At the end of each stage $s$ at which a requirement $\Rcal_{e_0, e_1}$ requires attention, we set $m_{e_0, e_1} = s+1$ and also $m_{e'_0, e'_1} = s +1$  for every requirement $\Rcal_{e'_0, e'_1}$ of lower priority. Moreover, if the reservoir is reverted to some stage $t$, then we set $m_{e_0, e_1} = s +1$ for every requirement $\Rcal_{e_0, e_1}$ with $m_{e_0, e_1} \geq t$. The construction will ensure that each marker eventually stabilizes to a value, and  that at any stage $s$, if $m_{e_0,e_1} \leq s$, then $X_{m_{e_0,e_1}} \supseteq X_s$. Indeed, the only time $X_s \not \supseteq X_{s+1}$ is when a reservoir is reverted or when a strategy acts.

\bigskip
\noindent \textbf{Requiring attention.} At a stage $s$, having $(F_{0,s}, F_{1,s}, X_s)$ already defined, a requirement $\Scal_n$ \emph{requires attention} if either $|F_{0,s}| \leq n$ or $|F_{1,s}| \leq n$. A requirement $\Rcal_{e_0, e_1}$ \emph{requires attention} if the following two properties hold:
\begin{itemize}
	\item[(1)] for each $i < 2$ and $x < \min X_{m_{e_0,e_1}}$, we have $\Phi_{e_i}^{F_{i,s}}(x) \simeq C(x)$;
	\item[(2)] for each $i < 2$ and $x < \min X_{m_{e_0,e_1}}$, there exists $E \subseteq X_{m_{e_0,e_1}} \cap (\max F_{i,s},\infty)$ such that $\Phi_{e_i}^{F_{i,s} \cup E}(x)\converges = C(x)$.
\end{itemize}
A requirement that does not require attention is called \emph{satisfied}.
In other words, $\Rcal_{e_0, e_1}$ requires attention if it is not already satisfied by either ensuring disagreement with $C$ (negation of property 1) or ensuring non-agreement with $C$ (negation of property 2). Note also that if we ever ensure the negation of property 1 then this will continue to hold whether the reservoir is later reverted or not. If we ever ensure the negation of property 2, then this will continue to hold if we never revert the reservoir again.

\bigskip
\noindent \textbf{Construction.}
We let $(F_{0,0}, F_{1,0}, X_0) = (\emptyset, \emptyset, \omega)$. At the beginning of a stage $s$, we assume we have already defined $(F_{0,s}, F_{1,s}, X_s)$.
Initially, we $\emptyset'$-computably check whether $X_s \cap (s, \infty) \neq \emptyset$.
If this is not the case, then we revert the reservoir, and do nothing else at this stage.
If $X_s \cap (s, \infty) \neq \emptyset$, then we pick the highest-priority requirement $\Rcal_{e_0, e_1}$ with $m_{e_0, e_1} \leq s$ or $\mathcal{S}_n$ with $n \leq s$ that requires attention at stage $s$. If there is no such requirement, we set $(F_{0,s+1}, F_{1,s+1}, X_{s+1}) = (F_{0,s}, F_{1,s}, X_s)$, and go to the next stage. If this requirement is $\mathcal{S}_n$, then we search computably in $\emptyset'$ for numbers $x_0,x_1$ such that $x_i \in X_s \cap A_i$ for each $i < 2$, or for a number $u$ such that $X_s \cap (u,\infty) = \emptyset$. The search must succeed, because if $X_s$ is infinite then its intersection with both $A_0$ and $A_1$ must be non-empty (and in fact, infinite). Otherwise, one of the $A_i$ would have $X_s$ as a subset, contrary to our assumption that $A_i$ has no infinite subset in $\Mcal$. If $x_0$ and $x_1$ are found we let $F_{i,s+1} = F_{i,s} \cup \{x_i\}$ for each $i < 2$, and let $X_{s+1} = X_s \cap (\max \{x_0,x_1\},\infty)$. If $u$ is found instead, we revert the reservoir.

Now suppose the highest priority requirement that requires attention is $\Rcal_{e_0, e_1}$. Let us write $m = m_{e_0,e_1}$ for the sake of notation. Note that since $m \leq s$, we have already defined $X_m$ in the construction. Let $\mathcal{D}$ be the $\Pi^{0,X_m}_1$ class of all $B_0 \oplus B_1$ such that $B_0 \cap B_1 = \emptyset$, $B_0 \cup B_1 = X_m$, and
$$
	(\forall i < 2)(\forall x)(\forall E_0,E_1 \subseteq B_i)~\Phi^{F_{i,s} \cup E_0}_{e_i}(x) \simeq \Phi^{F_{i,s} \cup E_1}_{e_i}(x).
$$
We consider two cases. Note that we can $\emptyset'$-computably determine which case we are in.

\bigskip
\noindent \textbf{Case 1:} $\Dcal = \emptyset$. Computably in $\emptyset'$, we search for a side $i < 2$, an $x$, and a finite set $E \subseteq A_i \cap X_m$ such that $\Phi^{F_{i,s} \cup E}_{e_i}(x)\converges = 1-C(x)$, and set $F_{i,s+1} = F_{i,s} \cup E$, set $F_{1-i, s+1} = F_{1-i,s}$, and set $X_{s+1} = X_s \cap (\max E, \infty)$. Such a set $E$ must be found since in particular, $(A_0 \cap X_m) \oplus (A_1 \cap X_m) \not \in \Dcal$. Now $\Rcal_{e_0,e_1}$ will be permanently satisfied. Indeed, we have ensured that the property (1) above can never hold again.

\bigskip
\noindent \textbf{Case 2:} $\Dcal \neq \emptyset$. Since $\Mcal \models \wkl$ and $X_m$ is low, we can $\emptyset'$-computably choose some $B_0 \oplus B_1 \in \Mcal \cap \Dcal$. In particular, $B_0 \oplus B_1$ is low, and $\emptyset'$ knows a lowness index for this set. There are two subcases.

\bigskip
\noindent \textbf{Case 2a:} \textit{$B_i \cap (f_{\langle e_0, e_1\rangle}(k), \infty) = \emptyset$ for some $i < 2$, where $k$ is the number of times $\Rcal_{e_0, e_1}$ has required attention prior to stage $s$.} Computably in $\emptyset'$, we search for an $x$ such that $\Phi^{F_{i,s} \cup E}_{e_i}(x) \simeq 1 - C(x)$ for all $E \subseteq B_{1-i}$. If $B_{1-i}$ is finite, the search will trivially succeed by our use conventions. On the other hand, if $B_{1-i}$ is infinite but no such $x$ exists, then because $B_0 \oplus B_1 \in \mathcal{D}$, it follows that $B_{1-i}$ computes $C$, contradicting that $B_{1-i} \in \Mcal$. So we may assume $x$ has been found. We then set $F_{0,s+1} = F_{0,s}$, set $F_{1,s+1} = F_{1,s}$, and set $X_{s+1} = B_{1-i} \cap (x,\infty)$. In this case, if $X_m$ is infinite, so is $B_{1-i}$ and hence also $X_{s+1}$. So $\Rcal_{e_0,e_1}$ will be permanently satisfied because property (2) will never hold again. (Of course, it may still be that $X_m$, hence $X_{s+1}$, is actually finite, in which case the latter will later be reverted.)

\bigskip
\noindent \textbf{Case 2b:} \textit{otherwise}. In this case, we assume (possibly wrongly) that both $B_0$ and $B_1$ are infinite. Computably in $\emptyset'$, we search for an $x$ such that $\Phi^{F_{0,s} \cup E}_{e_0}(x) \simeq 1 - C(x)$ for all $E \subseteq B_0$. Again, the search must succeed. Once $x$ is found, we set $F_{0,s+1} = F_{0,s}$, set $F_{1,s+1} = F_{1,s}$, and set $X_{s+1} = B_0 \cap (x,\infty)$. And again, if we were right that $B_0$ is infinite, $\Rcal_{e_0,e_1}$ will be permanently satisfied as above.

\bigskip
Each stage is concluded by updating the markers and going to the next stage.
This completes the construction.

\bigskip
\noindent \textbf{Verification.} We claim that every requirement is satisfied from some stage onwards.
Seeking a contradiction, fix the highest priority requirement that requires attention infinitely often. Clearly, this is some requirement $\Rcal_{e_0,e_1}$. Let $s$ be a stage after which no requirement of higher priority requires attention.

First, note that if $\Rcal_{e_0, e_1}$ is satisfied by Case 1 at some stage, then it never later requires attention again. This is because here we force disagreement, as witnessed by the finite initial segments of our conditions, and these grow monotonically even when the reservoirs are reverted. Thus, by our assumption, we must conclude that Case 1 never occurs.

In other words, each time $\Rcal_{e_0, e_1}$ requires attention, we end up satisfying it by Case 2. By construction, whenever we do this at some stage $s' > s$, it is by refining the reservoir to a final segment of some set $B_0$ or $B_1$ with $B_0 \cup B_1 = X_m$, where $m$ is the value of $m_{e_0,e_1}$ at stage $s'$. Our choice of $s$ implies that so long as we believe this refined reservoir to be infinite, the rest of the construction draws all further reservoirs from within it, and $\Rcal_{e_0, e_1}$ continues to be satisfied. Thus, at the first stage $s'' > s'$ at which $\Rcal_{e_0, e_1}$ requires attention again, we will have just reverted the reservoir to some stage $t_{s''} \leq m$.

Let $s_0 < s_1 < \cdots$ be all the stages $s' \geq s$ at which $\Rcal_{e_0, e_1}$ requires attention. As noted above, this means that $t_{s_0} \geq t_{s_1} \geq \cdots$. Fix $s_n$ such that $t_{s_n} = t_{s_l}$ for all $l \geq n$. We show that after stage $s_n$, Case 2a never applies again in the construction, so that $\Rcal_{e_0, e_1}$ is ever after only satisfied by Case 2b. Indeed, suppose $\Rcal_{e_0, e_1}$ requires attention at some stage $s_l \geq s_n$, and suppose we satisfy it by Case 2a. Then at this stage, we choose $B_0$ and $B_1$ as above, and refine the reservoir to a final segment of some $B_{1-i}$ because we already know that $B_i$ is finite. When we revert the reservoir right before stage $s_{l+1}$, it is because we discover that $B_{1-i}$ is also finite, meaning that in fact $B_0 \cup B_1$ is finite. But $B_0 \cup B_1$ is the entire reservoir at the start of stage $s_l$, so when we revert right before the start of stage $s_{l+1}$ it must be to a stage strictly before $s_l$. That is, $t_{s_{l+1}} < t_{s_l}$, which is a contradiction since $t_{s_{l+1}} = t_{s_l} = t_{s_n}$.

Hence, after stage $s_n$, the case analysis between Cases 2a and 2b is no longer necessary for $\Rcal_{e_0, e_1}$. Since this is the only point in the construction where we use the function $f_{\seq{e_0,e_1}}$ it follows that the construction is computable in $\emptyset' \oplus \bigoplus_{j \neq \seq{e_0,e_1}} f_j$. Now define a partial function $h : \omega \to \omega$, as follows. Let $k_0$ be the number of times $\Rcal_{e_0, e_1}$ requires attention prior to stage $s_n$, and let $h(k) = 0$ for all $k < k_0$. Now for $k \geq k_0$, let $B_0$ and $B_1$ be the sets we consider refining the reservoir to under Case 2 at stage $s_{n + k-k_0}$, and define
\[
	h(k) = (\mu p \in \omega)(\exists i < 2)~B_i \cap (p,\infty) = \emptyset.
\]
Note that this definition only requires knowing the construction, so $h$ is partial $\emptyset' \oplus \bigoplus_{j \neq \seq{e_0,e_1}} f_j$-computable.

Now if $h(k) \diverges$ for some $k \geq k_0$ then both the sets $B_0$ and $B_1$ considered at stage $s_{n + k-k_0}$ are infinite, so $B_0$, which is (up to finite difference) the reservoir we refine to under Case 2b at that stage, will never be reverted. Hence, $\Rcal_{e_0, e_1}$ will remain satisfied, which is a contradiction. It follows that $h$ is total. Since $f_{\seq{e_0,e_1}}$ is hyperimmune relative to $\emptyset' \oplus \bigoplus_{j \neq \seq{e_0,e_1}} f_j$, there must be some $k \geq k_0$ such that $f_{e_0, e_1}(k) \geq h(k)$. But then at stage $s_{n+k-k_0}$, Case 2a will correctly identify an $i < 2$ such that $B_i$ is finite, and we will satisfy $\Rcal_{e_0, e_1}$. This contradicts our assumption that Case 2a never applies again after stage $s_n$.

This contradiction completes the proof of our claim, and we conclude that all requirements are eventually permanently satisfied. This completes the verification and the proof of Theorem~\ref{thm:2-generic-computes-incomplete-D22}.
\end{proof}

Patey~\cite[Theorem 28]{Patey-2017b} proved that for every set $A$ and every hyperimmune function~$g$,
there is an infinite subset $H$ of $A$ or $\overline{A}$ such that $g$ is $H$-hyperimmune.
A natural question is whether this result can be effectivized in the case of $A$ and $f$ being $\Delta^0_2$. We adapt the asymmetric construction from~\cite{HJKLS-2008} to give an affirmative answer.


\begin{theorem}\label{Thm:newthmatthelastminute}
	Let $A$ be a $\Delta^0_2$ set and $g$ be a $\Delta^0_2$ hyperimmune function. Then $\emptyset'$ computes an infinite subset $H$ of $A$ or $\overline{A}$ such that $g$ is $H$-hyperimmune.
\end{theorem}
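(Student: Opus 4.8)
The approach is to adapt the asymmetric argument of Hirschfeldt, Jockusch, Kjos--Hanssen, Lempp, and Slaman~\cite[Theorem 4.5]{HJKLS-2008} that gives Theorem~\ref{T:Delta2incomplete}, replacing the cone-avoidance requirement ``$C \nTred H$'' throughout by the requirement ``$g$ is $H$-hyperimmune''; equivalently, one effectivizes for $\Delta^0_2$ data the forcing proof of Patey's Theorem~28 of~\cite{Patey-2017b}. Write $A_0 = \overline{A}$ and $A_1 = A$, and fix $\emptyset'$-computable approximations to $A_0$, $A_1$, and $g$. If $A$ or $\overline{A}$ has an infinite computable subset $H$, then $H$ is hyperimmune-free, so $g$ is $H$-hyperimmune and we are done; so assume $A$ is bi-immune (in particular $A_0$, $A_1$ are both infinite). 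For each Turing functional $\Phi_e$ we impose
\[
	\mathcal{R}_e:\quad \Phi_e^{H}\text{ is not total}\ \vee\ (\exists n)\big[\Phi_e^{H}(n)\converges\ \text{and}\ \Phi_e^{H}(n) < g(n)\big],
\]
to be met for whichever of the two sets $H_0 \subseteq A_0$, $H_1 \subseteq A_1$ built by the construction turns out infinite, along with size requirements $\Scal_n : |H_0|,|H_1| > n$. The first use of the hypothesis on $g$ is the observation that if $H$ is finite then $\Phi_e^H$ is a computable partial function, so were it total it would dominate $g$, contradicting hyperimmunity; hence a requirement met by forcing a convergence below $g$ witnessed by a finite initial segment of $H$ stays met under all later changes of reservoir --- exactly the role of ``disagreement'' (the negation of property~(1)) in the proof of Theorem~\ref{thm:2-generic-computes-incomplete-D22}.

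Next I would build the ambient structure as in that proof, but with hyperimmunity-preservation in place of incompleteness. Using a low basis theorem that also preserves hyperimmunity of $g$ --- the analogue for hyperimmunity of Lemma~\ref{thm:low-incomplete-basis-theorem}, available by the usual preservation machinery, the point being that a value forced by a functional on a $\Pi^0_1$ \emph{class} is computable, so if none of the forced values lay below $g$ then $g$ would be dominated by a computable function --- together with Lemma~\ref{thm:pa-degree-bounds-countably-coded-scott}, I would extract a countable Turing ideal $\Mcal = \{Z_0, Z_1, \dots\}$ with $\Mcal \models \wkl$, $\bigoplus_i Z_i$ low, and $g$ hyperimmune relative to $\bigoplus_i Z_i$. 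As in Theorem~\ref{thm:2-generic-computes-incomplete-D22} we may assume neither $A_0$ nor $A_1$ has an infinite subset in $\Mcal$, since such a subset would already serve as $H$. The construction then reuses the $\emptyset'$-driven finite-injury apparatus of that theorem --- conditions $(F_0, F_1, X)$ with $F_i \subseteq A_i$ finite and $X$ a lowness index for a member of $\Mcal$, movable markers, and the reverting-the-reservoir mechanism --- with one addition inherited from HJKLS: a \emph{current side} $j \in \{0,1\}$, which the construction may switch finitely often, recording which of $H_0$, $H_1$ we are presently trying to keep infinite.

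The module for $\mathcal{R}_e$ at a stage $s$, with marker $m = m_e$ and reservoir $X_m$, mirrors the case analysis of the proof of Theorem~\ref{thm:2-generic-computes-incomplete-D22}, with ``disagreement with $C$'' replaced by ``convergence to a value below $g$''. One forms the $\Pi^{0,X_m}_1$ class $\Dcal$ of all splits $B_0 \oplus B_1$ of $X_m$ on which $\Phi_e$ is determined, i.e.\ for every $i < 2$, every $x$, and all finite $E,E' \subseteq B_i$ one has $\Phi_e^{F_{i,s}\cup E}(x) \simeq \Phi_e^{F_{i,s}\cup E'}(x)$. If $\Dcal = \emptyset$, then the split $(A_0\cap X_m)\oplus(A_1\cap X_m)$ is not in $\Dcal$, so $\emptyset'$-computably one finds a side $i$, an input $x$, and a finite $E \subseteq A_i \cap X_m$ making $\Phi_e$ converge at $x$; since the value on that side is not determined, one searches for such data giving a value $< g(x)$, and this search must succeed, else the values determined on a $\Pi^0_1$ subclass would yield a computable function dominating $g$ --- adding $E$ to $F_{i,s}$ then satisfies $\mathcal{R}_e$ permanently on side $i$. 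If $\Dcal \neq \emptyset$, then since $\Mcal \models \wkl$ and $X_m$ is low one may $\emptyset'$-computably pick $B_0\oplus B_1 \in \Mcal \cap \Dcal$; on each side $i$, $\Phi_e$ with oracles $F_{i,s}\cup E$, $E \subseteq B_i$, then computes, relative to $B_i$, a fixed partial function $v_i$. If some $v_i$ is non-total, refining the reservoir on side $i$ to a final segment of $B_i$ meets $\mathcal{R}_e$ there; otherwise each $v_i$ is total and $B_i$-computable, and since $B_i \in \Mcal$ and $g$ is hyperimmune relative to $\bigoplus_i Z_i$, $v_i$ does not dominate $g$, so again some value falls below $g$ and one proceeds as above. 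When the side $i$ on which progress is made differs from the current side $j$, the construction switches to side $i$; here the assumption that $A_{1-j}$ has no infinite subset in $\Mcal$ ensures the other $A$-side is usable, and the markers and reverting mechanism absorb the switch just as they absorb a reservoir-reversion in Theorem~\ref{thm:2-generic-computes-incomplete-D22}.

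The main obstacle is making this case analysis exhaustive \emph{and} compatible with a $\emptyset'$-only finite-injury construction: one must show the trichotomy ``realize a value below $g$'' / ``thin to force partiality'' / ``switch sides'' always applies, and that along any single requirement a side-switch happens only finitely often, so that the final current side stabilizes and the $\Scal_n$ are met. Pinning down the switching is the delicate point, and it is precisely here that the proof departs from that of Theorem~\ref{thm:2-generic-computes-incomplete-D22}: there the hyperimmune functions supplied by a $2$-generic were used as a ``clock'' to detect finiteness of the parts $B_i$, but here $g$ is only hyperimmune relative to $\emptyset$ (it is computable from $\emptyset'$, which drives the construction), so $g$ cannot serve this purpose, and one must instead import HJKLS's more robust asymmetric backup construction --- which detects the failure of one side by passing its residual reservoir to the other side rather than by a timed guess --- and check that this bookkeeping terminates. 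Once the trichotomy and the finiteness of side-switches are in hand, the verification is a routine adaptation of that of Theorem~\ref{thm:2-generic-computes-incomplete-D22}: the highest-priority requirement acting infinitely often yields a contradiction through the markers, every requirement is eventually permanently met, and for the stable current side $j$ the set $H_j$ is an infinite subset of $A_j$ for which $g$ is hyperimmune.
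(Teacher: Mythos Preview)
Your adaptation of the symmetric $\Dcal$-splitting argument from Theorem~\ref{thm:2-generic-computes-incomplete-D22} breaks down in the case $\Dcal = \emptyset$. In the cone-avoidance setting, $\Dcal = \emptyset$ means that on the true split $(A_0\cap X_m)\oplus(A_1\cap X_m)$ there exist $i,x$ and $E_0,E_1\subseteq A_i\cap X_m$ with $\Phi_e^{F_{i,s}\cup E_0}(x)\neq\Phi_e^{F_{i,s}\cup E_1}(x)$; since $C(x)$ is a fixed bit, one of these values disagrees with $C(x)$, which is exactly what is needed. For hyperimmunity, however, you need a value \emph{below} $g(x)$, and two disagreeing values may both lie above $g(x)$. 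Your justification (``else the values determined on a $\Pi^0_1$ subclass would yield a computable function dominating $g$'') does not follow: $\Dcal=\emptyset$ says precisely that \emph{no} split determines $\Phi_e$, so there is no determined $\Pi^0_1$ subclass to appeal to, and in any case the predicate ``value $\geq g(x)$'' involves the $\Delta^0_2$ function $g$ and does not cut out a $\Pi^0_1$ class. There is no evident repair within the $\Dcal$-framework, and you also leave unresolved how the side-switching terminates in Case~2 once the $2$-generic clock is removed.

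The paper's proof abandons the $\Dcal$-splitting entirely. The global asymmetric case split is: either (Case~1) some infinite $X\in\Mcal$ has the property that every $\Sigma^0_1$ formula $\varphi(n,E,v)$ essential in $X$ is witnessed, with $v<g(n)$, by some finite $E\subseteq X\cap\overline{A}$---in which case a direct $\emptyset'$-construction builds $H\subseteq\overline{A}$; or (Case~2) not. In Case~2, for each infinite reservoir $X_m\in\Mcal$ there is a $\Sigma^0_1$ formula $\varphi$ essential in $X_m$ such that any $E\subseteq X_m$ witnessing $\varphi$ below $g$ must meet $A$. The construction $\emptyset'$-approximates which formula this is (a $\Delta^0_3$ guess, with reservoir-reversion on change), and for each $n$ builds a finitely-branching tree $T_n$ whose nodes are sequences of pairwise-separated finite sets $E^n_k\subseteq X_m$ each satisfying $\varphi$. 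The dichotomy replacing your $\Dcal$-split is: either the subtree $U_n\subseteq T_n$ on which $\Phi_e$ diverges at $n$ is infinite (refine the reservoir to a path in $\Mcal$ and force divergence), or it is finite, in which case an $X_m$-computable bound $h(n)$ on the convergent values and the $\varphi$-witnesses exists. Hyperimmunity of $g$ relative to $\Mcal$ gives an $n$ with $h(n)<g(n)$, and then the Case~2 hypothesis guarantees an element of $A$ in each $E^n_k$, producing a node $\alpha\in T_n$ with $\ran(\alpha)\subseteq A$ and hence $E\subseteq A$ with $\Phi_e^{F_s\cup E}(n)<g(n)$. This last step---using the essential-formula hypothesis to pull the witnessing set into $A$---is the combinatorial idea missing from the $\Dcal$-based approach.
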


\noindent Note that the set $H$ above is necessarily incomplete as a $\Delta^0_2$ set, since obviously no $\Delta^0_2$ function can be hyperimmune relative to $\emptyset'$. Thus, our theorem also properly strengthens the result of~\cite[Theorem 4.5]{HJKLS-2008}.

Before proceeding to the proof, we need the following basis theorem.

\begin{lemma}\label{thm:low-hi-preserving-basis-theorem}
	Let $\Dcal \subseteq 2^\omega$ be a non-empty $\Pi^0_1$ class,
	and let $g$ be a $\Delta^0_2$ hyperimmune function.
	Then $\Dcal$ has a low member $P$ such that $g$ is $P$-hyperimmune.
\end{lemma}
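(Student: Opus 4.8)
The plan is to build the low member $P$ by a forcing construction that is simultaneously a standard low-basis argument for the $\Pi^0_1$ class $\Dcal$ and a diagonalization ensuring $P$-hyperimmunity of $g$. I would use the familiar low-basis forcing whose conditions are $\Sigma^0_1$-indices for subtrees of $2^{<\omega}$ that have been pruned at the first $n$ levels of decisions about whether $\Phi_e^P$ is total for $e < n$, so that the generic object $P$ is low via the standard computation of $P'$ from $\emptyset'$. On top of this, for each $e$ I would add a requirement $\mathcal{N}_e$ asserting that $\Phi_e^P$ (viewed as a candidate function majorizing $g$) fails to majorize $g$, i.e.\ there is some $n$ with $g(n) > \Phi_e^P(n)$. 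To meet $\mathcal{N}_e$, given the current condition (a nonempty $\Pi^0_1$ subclass $\mathcal{Q}$ of $\Dcal$), I would ask whether there is a finite bound $k$ and an $n$ such that every $P \in \mathcal{Q}$ has $\Phi_e^P(n)\converges$ with value $< g(n)$; if so I strengthen to that fact and win. The key point is that if no such pair exists, then the function $n \mapsto \max\{\,\Phi_e^P(n) : P \in \mathcal{Q},\ \Phi_e^P(n)\converges\,\}$ — whenever it is everywhere defined on the relevant domain — is $\emptyset'$-computable (using compactness of $\mathcal{Q}$ and that $\mathcal{Q}$ is a $\Pi^{0}_1$ class, so the supremum over a closed set of a computation with bounded search is $\emptyset'$-computable), and this contradicts the $\emptyset'$-irrelevant fact that $g$ is $\Delta^0_2$ and hyperimmune: a $\Delta^0_2$ function cannot be hyperimmune relative to $\emptyset'$... so I must be more careful.

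The correct dichotomy: either we can restrict to a subclass forcing $\Phi_e^P$ partial (which is automatically handled by the low-basis skeleton, since totality of $\Phi_e^P$ is decided along the generic), or $\Phi_e^P$ is forced total on all of $\mathcal{Q}$, in which case I define the $\emptyset'$-computable function $f_e(n) = \max\{\Phi_e^P(n) : P \in \mathcal{Q}\}$, which is total and $\emptyset'$-computable by compactness and the padding argument that forcing $\Sigma^0_1$ facts about a computable notion of forcing is $\emptyset'$-computable. Since $g$ is hyperimmune (in the unrelativized sense suffices here: we only need $g$ not dominated by $f_e$, and by hyperimmunity of $g$ relative to $\emptyset$ — which follows from $\Delta^0_2$-hyperimmunity being at least plain hyperimmunity — there is $n$ with $g(n) > f_e(n) \geq \Phi_e^P(n)$ for every $P \in \mathcal{Q}$). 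Thus every $P$ in the final subclass satisfies $g(n) > \Phi_e^P(n)$, meeting $\mathcal{N}_e$. Wait — hyperimmunity of $g$ only gives $g(n) \geq f_e(n)$ at some $n$, not strict; I would instead use the standard trick of asking about $f_e(n)+1$, or note that hyperimmune functions fail to be dominated even after a finite shift, so replacing $f_e$ by $n \mapsto f_e(n)+1$ still cannot dominate $g$, giving strict inequality.

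Concretely, the construction is: start with $\mathcal{Q}_0 = \Dcal$; at stage $2e$ handle the lowness requirement by restricting to the clopen half of $\mathcal{Q}_{2e}$ deciding $\Phi_e^P(e)\converges$ or forcing divergence on the whole class (this is exactly the low basis theorem step, done $\emptyset'$-effectively); at stage $2e+1$ handle $\mathcal{N}_e$: check $\emptyset'$-effectively whether $\Phi_e^P$ is already forced partial on $\mathcal{Q}_{2e+1}$ (if so, nothing to do, $\mathcal{N}_e$ is vacuous) or whether there is already $n$ and a subclass on which $\Phi_e^P(n)$ is total with value $< g(n)$ (if so, restrict to it); otherwise $\Phi_e^P$ is total on $\mathcal{Q}_{2e+1}$, form $f_e$ as above, use hyperimmunity of $g$ to find $n$ with $g(n) > f_e(n)$, and restrict to the (still nonempty, in fact everything) subclass — here $\mathcal{N}_e$ is met with no further restriction. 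Then $P = \bigcap_s \mathcal{Q}_s$ (any element of it) is low by the usual verification and $g$ is $P$-hyperimmune because every $\Phi_e^P$, if total, is shown not to majorize $g$.

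The main obstacle I expect is the bookkeeping ensuring that forming $f_e$ and checking the dichotomy can be done $\emptyset'$-computably and that the class stays nonempty throughout — specifically, verifying that "$\Phi_e^P$ total on all of $\mathcal{Q}$" can be recognized by $\emptyset'$ (it is $\Pi^0_2$ a priori, but the low-basis skeleton has already decided totality, so by stage $2e+1$ it is a $\emptyset'$-decidable fact which case obtains), and that the supremum defining $f_e$ is genuinely $\emptyset'$-computable rather than merely $\emptyset''$-computable. This is handled because $\mathcal{Q}$ is a $\Pi^0_1$ class with a $\emptyset'$-computable (in fact computable, up to the index) presentation, $\Phi_e^P(n)$ uses a finite initial segment of $P$, and over a compact class the maximum value of a convergent bounded-use computation is the maximum over finitely many nodes at a level found by $\emptyset'$; I would spell this out carefully as it is the crux of why the argument does not lose effectivity.
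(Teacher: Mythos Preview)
Your overall strategy matches the paper's: interleave the low-basis forcing with requirements ensuring that $g$ is not dominated by any $\Phi_e^P$. However, two points in your write-up are genuine gaps rather than mere imprecisions.

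First, the low-basis skeleton does \emph{not} decide totality of $\Phi_e^P$ by stage $2e+1$; at that point it has only decided $\Phi_j^P(j)$ for $j \leq e$. So your trichotomy ``forced partial / there is $n$ with value $< g(n)$ / total on all of $\mathcal{Q}$'' is not $\emptyset'$-decidable as written, and your claim that ``the low-basis skeleton has already decided totality'' is simply false. The paper sidesteps this entirely: it never tries to decide totality, but instead searches $\emptyset'$-computably for a single witness $x$ such that \emph{either} the subtree $\{\tau \in T_1 : \Phi_e^\tau(x)\diverges\}$ is infinite (a $\Pi^0_1$ condition on the computable tree $T_1$), \emph{or} there is a level $\ell$ at which every length-$\ell$ node of $T_1$ has $\Phi_e^\tau(x)\converges < g(x)$ (a $\Sigma^0_1$-in-$g$ condition, hence $\emptyset'$-decidable since $g$ is $\Delta^0_2$). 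One then argues that this search must succeed.

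Second---and this is the crux you yourself flag but do not resolve---you correctly observe that an $\emptyset'$-computable $f_e$ cannot be beaten by plain hyperimmunity of $g$, yet you go on to call $f_e$ ``$\emptyset'$-computable'' and say the relevant level is ``found by $\emptyset'$.'' The point you are missing is that $f_e$ (the paper's $h$) is \emph{computable}, not merely $\emptyset'$-computable. The current class $\mathcal{Q}$ is $[T_1]$ for a computable tree $T_1$; only the \emph{index} of $T_1$ was found using $\emptyset'$. Under the hypothesis that the divergence subtree is finite for every $x$, the search for the least $\ell$ at which all length-$\ell$ nodes of $T_1$ have $\Phi_e^\tau(x)\converges$ is an unbounded search through a computable predicate, and the resulting bound is a total \emph{computable} function of $x$. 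That is exactly why unrelativized hyperimmunity of $g$ applies, and it is what makes the argument go through.
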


\begin{proof}
	We build computably in $\emptyset'$ an infinite decreasing sequence	 of non-empty $\Pi^0_1$ classes
\[
	\Dcal = \Dcal_0 \supseteq \Dcal_1 \supseteq \cdots
\]
such that for every $s \in \omega$,
\begin{itemize}
	\item[(i)] either $(\forall P \in \Dcal_{s+1})~\Phi_s^P(s)\converges$ or $(\forall P \in \Dcal_{s+1})~\Phi_s^P(s)\diverges$;
	\item[(ii)] for some $x \in \omega$, either $(\forall P \in \Dcal_{s+1})~\Phi_s^P(x)\converges < g(x)$ or $(\forall P \in \Dcal_{s+1})~\Phi_s^P(x)\diverges$.
\end{itemize}
Suppose such a sequence exists. Since (i) forces the jump, it follows that $\bigcap_s \Dcal_s = \{P\}$ for some $P$. Since the sequence is $\Delta^0_2$, it also follows that this $P$ is low. Finally, by (ii), $g$ will be $P$-hyperimmune, as desired.

We now explain how to construct this sequence. At stage $s$, suppose we have defined $\Dcal_s$. We define $\Dcal_{s+1}$. Let $T \subseteq 2^{<\omega}$ be an infinite computable binary tree such that $[T] = \Dcal_s$. Let $T_1 \subseteq T$ be the outcome of forcing the jump on $s$, in the standard way. Thus, any $\Pi^0_1$ subclass of $[T_1]$ satisfies (i). To satisfy (ii), we search computably in $\emptyset'$ for some $x$ such that $T_2 = \{ \tau \in T_1 : \Phi_s^\tau(x)\diverges \}$ is infinite, or such that $(\exists \ell)(\forall \tau \in 2^\ell)[\tau \in T \rightarrow \Phi_s^\tau(x)\converges < g(x)]$. Such an $x$ must be found, since if the former case does not hold, the function $h : \omega \to \omega$ that on input $x$ searches for the least $\ell$ such that $(\forall \tau \in 2^\ell)~[\tau \in T \rightarrow \Phi_s^\tau(x)\converges]$ and outputs a bound on all these computations is total computable, and by hyperimmunity of $g$, we must have $h(x) < g(x)$ for some $x$. In the former case, let $\Dcal_{s+1} = [T_2]$, and in the latter case, let $\Dcal_{s+1} = [T_1]$. The class $\Dcal_{s+1}$ therefore satisfies (i) and (ii). This completes the construction and the proof.
\end{proof}

Let $\varphi(n,E,v)$ be a $\Sigma^0_1$ formula of second-order arithmetic, where $n,v$ are number variables and $E$ is a number variable coding a finite set. For an infinite set $X$, we say $\varphi$ is \emph{essential in $X$} if for every $n$ there is a sequence $E^n_0, E^n_1, \ldots \subseteq X$ such that for all $k$ we have $\max E^n_k < \min E^n_{k+1}$ and $(\exists v)~\varphi(n,E^n_k,v)$.

\begin{proof}[Proof of Theorem~\ref{Thm:newthmatthelastminute}]
	Fix $A$ and $g$. By Lemmas~\ref{thm:pa-degree-bounds-countably-coded-scott} and~\ref{thm:low-hi-preserving-basis-theorem}, there exists $\Mcal = \{ Z_0, Z_1, \dots \}$ as follows:
\begin{itemize}
	\item $\Mcal \models \wkl$;
	\item $\bigoplus_i Z_i$ is low;
	\item $g$ is $\bigoplus_i Z_i$ hyperimmune.
\end{itemize}
As usual, we may assume neither $A$ nor $\overline{A}$ has any infinite subset in $\mathcal{M}$. We consider two cases.

\bigskip
\noindent \textbf{Case 1:} \textit{there is an infinite set $X \in \Mcal$ such that for every $\Sigma^0_1$ formula $\varphi(n, F, v)$ that is essential in $X$, there is some $n \in \omega$ and some finite set $E \subseteq X \cap \overline{A}$ such that $(\exists v < g(n))~\varphi(n, E, v)$.} We build an infinite $\emptyset'$-computable subset $H$ of $\overline{A}$ such that $g$ is $H$-hyperimmune. More precisely, we build an infinite $\emptyset'$-computable sequence of finite sets $F_0, F_1, \ldots$ such that $|F_e| < |F_{e+1}|$ and $F_e \subseteq X \cap \overline{A}$ for all $e$, and such that $g$ is $\bigcup_e F_e$-hyperimmune. We set $H = \bigcup_e F_e$.

Let $F_0 = \emptyset$, and suppose by induction that we have defined $F_e \subseteq X \cap \overline{A}$ for some $e$. Let $\varphi(n,E,v)$ be the formula
\[
	\max F_e < \min E~\wedge~\Phi_e^{F_e \cup E}(n) \converges = v,
\]
which is obviously $\Sigma^0_1$. Now, either $\varphi$ is essential in $X$ or it is not. If it is not, then there must be an $n$ and an $x \geq \max F_e$ such that for all $E \subseteq X \cap (x,\infty)$ we have $\neg (\exists v)~\varphi(n,E,v)$. If $\varphi$ is essential, then by assumption there is some $n$ and some $E \subseteq X \cap \overline{A}$ such that $\max F_e < \min E$ and $\Phi_e^{F_e \cup E}(n) \converges < g(n)$. We can search for this data computably in $\emptyset'$. We consider two subcases, according to which we find first.

\bigskip
\noindent \textbf{Case 1a:} \textit{there is an $n$ and an $x \geq \max F_e$ such that for all $E \subseteq X \cap (x,\infty)$ we have $\neg (\exists v)~\varphi(n,E,v)$.} Since $A$ has no infinite subset in $\mathcal{M}$, we can find a $y > x$ in $X \cap \overline{A}$. Let $F_{e+1} = F_e \cup \{y\}$. By construction and the definition of $\varphi$, we have ensured that $\Phi_e^{H}(n) \diverges$.

\bigskip
\noindent \textbf{Case 1b:} \textit{there is some $n$ and some $E \subseteq X \cap \overline{A}$ such that $\max F_e < \min E$ and $\Phi_e^{F_e \cup E}(n) \converges < g(n)$} We then let $F_{e+1} = F_e \cup E$. Thus, we have ensured that $\Phi_e^H$ will not dominate $g$.

\bigskip
\noindent This finishes the construction. Clearly, the resulting set $H$ is infinite and computable in $\emptyset'$, and $g$ is $H$-hyperimmune, as desired.

\bigskip
\noindent \textbf{Case 2:} \textit{otherwise.} In this case, we build an infinite $\emptyset'$-computable subset $H$ of $A$ such that $g$ is $H$-hyperimmune. We construct $H$ by stages, as we now describe. In some ways, this construction is similar to (but simpler than) that of Theorem~\ref{thm:2-generic-computes-incomplete-D22}. Thus, we omit some of the details below.

\bigskip
\noindent \textbf{Conditions.} A \emph{condition} is a pair $(F,X)$, where $F \subseteq A$ is a finite set, $X$ is (a lowness index for) an element of $\Mcal$, and $\max F < \min X$. A condition $(E,Y)$ \emph{extends} $(F,X)$, written $(E,Y) \leq (F,X)$, if $F \subseteq E$ and $\min(E \setminus F) > \max F$.

We build a $\emptyset'$-computable sequence of conditions
\[
	(F_0,X_0) \geq (F_1,X_1) \geq \cdots
\]
and let $H = \bigcup_e F_e$. Our goal is to satisfy the following requirements: for all $e \in \omega$,
\[
	\Rcal_e : (\exists n)~\Phi_e^{H}(n) \diverges~\vee~(\exists n)~\Phi_e^H(n) \converges < g(n);
\]
and for all $n \in \omega$,
\[
	\Scal_n : |H| > n.
\]
Clearly, these requirements suffice for our needs. We assign the requirements priorities as usual.

\bigskip
\noindent \textbf{$\Delta^0_3$ approximation.} Since we are in Case 2, for every infinite $X \in \Mcal$ there exists a $\Sigma^0_1$ formula $\varphi$ essential in $X$ such that for every $n$ and every finite set $E \subseteq X$, if $(\exists v < g(n))~\varphi(n, E, v)$ then $E \cap A \neq \emptyset$. Now, $\emptyset''$ can find such a $\varphi$ uniformly from a lowness index for $X$. So if we fix a computable indexing $\varphi_0,\varphi_1,\ldots$ of all $\Sigma^0_1$ formulas, then there is a $\emptyset''$-computable function $f : \omega \to \omega$ such that for every (lowness index for a) low set $X$, if $X$ is infinite then $\varphi_{f(X)}$ is the $\Sigma^0_1$ formula we want, and if $X$ is finite then $f(X)$ is some arbitrary value. Let $\widehat{f}$ be a $\emptyset'$-computable approximation to $f$, so that for every $X$ we have $f(X) = \lim_s \widehat{f}(X,s)$.

\bigskip
\noindent \textbf{Movable marker procedure.} To each requirement $\Rcal_e$,
we associate a marker $m_e \in \omega$. This marker represents a stage such that $X_{m_e}$ is infinite, and after which no requirement of higher priority requires attention (as defined below). We approximate $f(X_{m_e})$ in order to satisfy $\Rcal_e$, as detailed in the construction. At the end of each stage $s$ at which $\Rcal_e$ is the highest-priority requirement that requires attention, we set $m_{e'} = s+1$ for every requirement $\Rcal_{e'}$ of strictly lower priority. Also, whenever the reservoir is reverted to some stage $m_e$ (defined below), we set $m_{e'} = s$ for every requirement $\Rcal_{e'}$ of strictly lower priority than the requirement that has caused the reservoir to be reverted. Initially, we set $m_e = 0$ for all $e$. Thus, at the start of a stage $s$, we will have $m_e \leq s$ for all $e$.

\bigskip
\noindent \textbf{Reverting the reservoir.} At a stage $s > 0$, we will sometimes need to \emph{revert the reservoir}. This means that we look for the least $e \leq s$ such that our approximation to $f(X_{m_e})$ changes at stage $s$, i.e., $\widehat{f}(X_{m_e},s) \neq \widehat{f}(X_{m_e},s-1)$.
We then say the reservoir has been \emph{reverted to stage $m_e$}, and redefine $X_s = X_{m_e} \cap (\max F_s,\infty)$. We do this, rather than defining $X_{s+1}$ to be $X_{m_e} \cap (\max F_s,\infty)$, because unlike in the construction of Theorem~\ref{thm:2-generic-computes-incomplete-D22}, we do not want resetting the reservoir to cause us to go to the next stage. This definition will allow us to reset the reservoir and then continue with other actions at stage $s$.

\bigskip
\noindent \textbf{Requiring attention.} At stage $s$, a requirement $\Scal_n$ \emph{requires attention} if $|F_s| \leq n$. A requirement $\Rcal_e$ \emph{requires attention} if the following properties hold:
\begin{enumerate}
	\item for each $x < \min X_{m_e}$, if $\Phi_e^{F_s}(x) \converges$ then $\Phi_e^{F_s}(x) \geq g(x)$;
	\item for each $x < \min X_s$, there exists $E \subseteq X_{m_e} \cap (\max F_s,\infty)$ such that $\Phi_e^{F_s \cup E}(x) \converges \geq g(x)$.
\end{enumerate}
A requirement that does not require attention is called \emph{satisfied}.

\bigskip
\noindent \textbf{Construction.} Initially, let $(F_0,X_0) = (\emptyset,\omega)$. At stage $s$, assume we have defined $(F_0,X_0),\ldots,(F_s,X_s)$. If $s > 0$ and there is an $e \leq s$ such that $\widehat{f}(X_{m_e},s) \neq \widehat{f}(X_{m_e},s-1)$, then we revert the reservoir. However, unlike in the construction in Theorem~\ref{thm:2-generic-computes-incomplete-D22}, we do not end the stage if this happens. Instead, whether this happens or not, we now consider the highest-priority requirement $\Scal_n$ or $\Rcal_e$ for $n,e \leq s$ that requires attention at stage $s$. If there is no such requirement, let $(F_{s+1},X_{s+1}) = (F_s,X_s)$. If such a requirement exists, and it is $\Scal_n$, then we search for the least $x \in X_s \cap A$, or for a number $u$ such that $\widehat{f}(X_{m_e},u) \neq \widehat{f}(X_{m_e},s)$ for some $e \leq s$. The search must succeed, because as we will see, if $\widehat{f}(X_{m_e},s) = f(X_{m_e})$ for all $e \leq s$ then $X_s$ must be infinite, and if $X_s$ is infinite then it must intersect $A$ by our assumption that $\overline{A}$ has no infinite subset in $\Mcal$. Now if $x$ is found, we let $F_{s+1} = F_s \cup \{x\}$ and $X_{s+1} = X_s \cap (x,\infty)$. If $u$ is found instead, we revert the reservoir, and we start our search for the highest-priority requirement requiring attention again.

Now suppose the highest-priority requirement requiring attention is $\Rcal_e$. Write $\varphi = \varphi_{\widehat{f}(X_{m_e},s)}$ for ease of notation. Computably in $X_{m_e}$, we build for each $n$ a sequence $E^n_0, E^n_1, \ldots \subseteq X_{m_e}$ such that $\max F_s < \min E^n_k$ and $\max E^n_k < \min E^n_{k+1}$, and $(\exists v)~\varphi(n,E^n_k,v)$ for all $k$. Note that if $X_{m_e}$ is infinite and $\widehat{f}(X_{m_e},s) = f(X_{m_e})$ then each of these sequences is actually infinite. Otherwise, we may not be able to find $E^n_k$ for some $n$ and $k$, so some of the sequences may be finite (partial). For each $n$, let $T_n$ be the set of all $\alpha \in \omega^{<\omega}$ such that $\alpha(k) \in E^n_k$ for all $k < |\alpha|$. Thus, if $E^n_0, E^n_1, \ldots$ is infinite, then $T_n$ is an infinite $X_{m_e}$-computable, $X_{m_e}$-computably bounded tree. Otherwise, $T_n$ may be only partially defined. But either way, we can uniformly find a $\Delta^{0,X_{m_e}}_1$ index for $T_n$ as a (possibly partial) tree. Using this index and the lowness index of $X_{m_e}$, we can ask $\emptyset'$ whether
\[
	U_n = \{ \alpha \in T_n : (\forall E \subseteq \ran(\alpha))~\Phi_e^{F_s \cup E}(n) \diverges \}
\]
is finite, by which we mean that there is a level $k$ at which $T_n$ is defined, but such that $U_n$ has no elements of length $k$. If the answer is yes, then $U_n$ is actually a finite tree, regardless of whether the sequence $E^n_0, E^n_1, \ldots$ is infinite or not. If the answer is no, then $U_n$ is a bona fide infinite tree, provided $E^n_0, E^n_1, \ldots$ is infinite as well. Of course, if the answer is no but $E^n_0, E^n_1, \ldots$ is finite, so that $T_n$ is only partially defined, then we will be incorrectly assuming that $U_n$ is infinite. But if this is so, we will eventually revert the reservoir.

Define a partial $X_{m_e}$-computable function $h : \omega \to \omega$ as follows. On input $n$, the function $h$ first searches for the least $\ell$ such that $E^n_k$ is defined for all $k < \ell$ and
\[
	(\forall \alpha \in T_n)~[|\alpha| = \ell \implies (\exists E \subseteq \ran(\alpha))~\Phi^{F_s \cup E}_e(n) \converges],
\]
and then outputs the least $w$ bounding the values of all the relevant computations $\Phi^{F_s \cup E}_e(n)$ as well as all the witnesses $v$ such that $\varphi(n,E^n_k,v)$ for $k < \ell$.

We now $\emptyset'$-computably search for the least $n$ such that either $h(n) \converges < g(n)$ or such that $U_n$ is not finite in the sense described above. As mentioned above, if $\emptyset'$ thinks that $U_n$ is finite then it is actually so, so $h(n)$ will be defined. Hence, if we never find an $n$ such that $U_n$ is not finite then $h$ will be a total $X_{m_e}$-computable function, and there will have to be an $n$ such that $h(n) < g(n)$ since $g$ is $X_{m_e}$-hyperimmune. It follows that our search must succeed. So fix $n$; we have two subcases.

\bigskip
\noindent \textbf{Case 2a:} \textit{$U_n$ is not finite.} Then $\emptyset'$ can produce (a lowness index for) a low path $X \in [U_n] \cap \mathcal{M}$. Note that if $T_n$ is really infinite (i.e., not partially defined) then the range of every path of $U_n$ is infinite. So it makes sense to identify $X$ with its range. Also, since each $E^n_k$ was a subset of $X_{m_e}$, so is $X$. We let $(F_{s+1},X_{s+1}) = (F_s,X)$. Now if the reservoir is never again reverted, we will have satisfied $\Rcal_e$ by ensuring that property (2) in the definition of requiring attention never holds again.

\bigskip
\noindent \textbf{Case 2b:} \textit{$h(n) \converges < g(n)$.} Let $\ell$ be the level of $T_n$ witnessing that $h(n) \converges$. Then for all $k < \ell$ we have that $(\exists v < h(n))~\varphi(n,E^n_k,v)$, and hence that $(\exists v < g(n))~\varphi(n,E^n_k,v)$. Since we are in Case 2, this means that for every $k < \ell$ there is some $x_k \in E^n_k \cap A$. Then $\alpha = x_0 \cdots x_{\ell-1}$ is an element of $T_n$ of length $\ell$, hence there exists some $E \subseteq \ran(\alpha) \subseteq A$ such that $\Phi_e^{F_s \cup E}(n) \converges$, and by definition, we have $\Phi_e^{F_s \cup E}(n) \leq h(n) < g(n)$. In this case, we set $F_{s+1} = F_s \cup E$ and $X_{s+1} = X_{m_e} \cap (\max E,\infty)$. We have now permanently satisfied $\Rcal_e$, because property (1) in the definition of requiring attention will never hold again.

\bigskip
Each stage is concluded by going to the next stage. This completes the construction.

\bigskip
\noindent \textbf{Verification.} This verification is similar to (but simpler than) that of Theorem~\ref{thm:2-generic-computes-incomplete-D22}. Seeking a contradiction, fix the highest-priority requirement that requires attention infinitely often. Let $s$ be a stage after which no requirement of higher priority requires attention again. First, note that this requirement cannot be $\Scal_n$. Otherwise, at the first stage $s' \geq s$ at which $\Scal_n$ requires attention, we would by construction have to end up resetting the reservoir infinitely many times. But that is impossible, because eventually our approximations to $f(X_{m_e})$ for all $e \leq s'$ are correct, and $\Scal_n$ is then (permanently) satisfied. So, the requirement in question must be some $\Rcal_e$. Without loss of generality, assume $\widehat{f}(X_{m_{e'}},s') = f(X_{m_{e'}})$ for all $s' \geq s$ and all $e'$ such that $\Rcal_{e'}$ has higher priority than $\Rcal_e$ or is equal to $\Rcal_e$. By induction on all such $e'$, it follows that $m_{e'}$ never changes again after stage $s$ and that $X_{m_{e'}}$ is infinite. Now consider any stage $s' \geq s$ at which $\Rcal_e$ requires attention (after any resets of the reservoir). Since $X_{m_e}$ is infinite, all the sequences $E^n_0,E^n_1,\ldots$ that we define at this stage will be infinite, and all the trees $U_n$ will thus either be actually finite or actually infinite. This means that whether Case 2a applies or Case 2b applies, we will end up permanently satisfying $\Rcal_e$ at this stage, which is a contradiction.

Thus, all requirements are eventually permanently satisfied, so $H = \bigcup_s F_s$ is an infinite $\emptyset'$-computable subset of $A$ and $g$ is $H$-hyperimmune.
\end{proof}


The original asymmetric proof of Theorem \ref{T:Delta2incomplete} in \cite{HJKLS-2008} breaks into two cases, depending on whether the set $A$ is or is not hyperimmune. In the former case, the construction actually produces an infinite subset of $\overline{A}$ that is low (see \cite[Corollary 4.9]{HJKLS-2008}), while in the latter, it produces an infinite subset of $A$ that is merely incomplete $\Delta^0_2$. By Downey, Hirschfeldt, Lempp, and Solomon~\cite{DHLS-2001}, we cannot improve this proof to obtain a low set in either case. In particular, there is no hope of proving that every non-hyperimmune $\Delta^0_2$ set $A$ has an infinite low subset. However, as we show next, we can obtain this conclusion if we work with a variation on the notion of hyperimmunity.

\begin{definition}
Let $\Mcal$ be a Turing ideal.
\begin{enumerate}
	\item Let $\varphi(D)$ be a formula of second-order arithmetic, where $D$ is a number variable coding a finite set. For an infinite set $X \in \Mcal$, we say $\varphi$ is \emph{$\Mcal$-densely essential within $X$} if for every infinite $Y \subseteq X$ in $\Mcal$ there is a non-empty finite set $D \subseteq Y$ such that $\varphi(D)$ holds.
	\item A set $A$ is \emph{densely $\Mcal$-hyperimmune} if for every infinite set $X \in \Mcal$ and every $\Sigma^0_1(X)$ formula $\varphi$ that is $\Mcal$-densely essential within $X$, there is a finite set $D \subseteq X \cap \overline{A}$ such that $\varphi(D)$ holds.
\end{enumerate}
\end{definition}

\begin{theorem}\label{thm:incomplete-delta2-not-densely-hyperimmune}
Fix a Scott ideal $\Mcal$ coded by a low set and a $\Delta^0_2$ set $A$ that is not densely $\Mcal$-hyperimmune. Then there is a low infinite set $G \subseteq A$.
\end{theorem}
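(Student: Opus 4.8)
The plan is to run a Mathias-style forcing over the given Scott ideal $\Mcal$, steered by a $\emptyset'$ oracle, building $G$ as an increasing union of finite subsets of $A$ while forcing the jump. First I would unwind the hypothesis: fix an infinite $X\in\Mcal$ and a $\Sigma^0_1(X)$ formula $\varphi$ that is $\Mcal$-densely essential within $X$ and admits no finite witness contained in $X\cap\overline A$; replacing $\varphi(D)$ by $(\exists D'\subseteq D)\,\varphi(D')$, I may assume $\varphi$ is monotone. From the choice of $\varphi$ one extracts three facts: (a) any finite $D\subseteq X$ with $\varphi(D)$ meets $A$, since otherwise $D\subseteq X\cap\overline A$; (b) for every infinite $Y\subseteq X$ with $Y\in\Mcal$, the set $Y\cap A$ is again infinite, because otherwise $Y\cap\overline A$ is a cofinite, hence $Y$-computable, infinite subset of $X$ lying in $\Mcal$ and in $\overline A$, and dense essentiality of $\varphi$ within $X$ would then supply a witness inside $X\cap\overline A$; and (c) $\varphi$ is $\Mcal$-densely essential within every infinite $Y\subseteq X$ with $Y\in\Mcal$, since infinite $\Mcal$-subsets of $Y$ are infinite $\Mcal$-subsets of $X$. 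I may also assume $A$ has no infinite subset in $\Mcal$, since such a subset would already be low and contained in $A$. Two features of $\Mcal$ get used repeatedly: every member is low (being computable from the low set coding $\Mcal$), so $\emptyset'$ can decide $\Sigma^0_1$-facts relative to it and in particular search for $\varphi$-witnesses; and $\Mcal\models\WKL$, so $\emptyset'$ can decide whether a $\Pi^0_1$ class relative to a member of $\Mcal$ is empty and, if it is not, produce a low index for a member of the class lying in $\Mcal$.

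For the construction, conditions are pairs $(F,Y)$ with $F\subseteq X\cap A$ finite, $Y\subseteq X$ infinite in $\Mcal$, and $\max F<\min Y$; $(E,Z)$ extends $(F,Y)$ if $F\subseteq E$, $E\setminus F\subseteq Y$, $Z\subseteq Y$, and $Z\in\Mcal$. I would build a $\emptyset'$-computable descending sequence of conditions starting from $(\emptyset,X)$, meeting the requirements $\Scal_n : |G|>n$ and $\Rcal_e :$ ``$\Phi_e^G(e)$ is decided'', and set $G=\bigcup_s F_s$. To meet $\Scal_n$ from $(F,Y)$, $\emptyset'$ searches for a finite $D\subseteq Y\cap(\max F,\infty)$ with $\varphi(D)$; one exists by (c), and by (a) some $x\in D\cap A$ can be found using that $A$ is $\Delta^0_2$; pass to $(F\cup\{x\},\,Y\cap(x,\infty))$. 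To meet $\Rcal_e$ from $(F,Y)$, encode sub-reservoirs of $Y$ as the subsets meeting each block $[y_{2k},y_{2k+1}]$ (so all candidates are automatically infinite), and form the $\Pi^0_1(Y)$ class $\mathcal D^{\uparrow}$ of all such $Z$ with $\Phi_e^{F\cup E}(e)\uparrow$ for every finite $E\subseteq Z$. If $\mathcal D^{\uparrow}\neq\emptyset$, pick $Z\in\Mcal\cap\mathcal D^{\uparrow}$ and pass to $(F,\,Z\cap(\max F,\infty))$; every further extension then has $\Phi_e^G(e)\uparrow$. If $\mathcal D^{\uparrow}=\emptyset$, I claim there is a finite $E\subseteq Y\cap A$ with $\max F<\min E$ and $\Phi_e^{F\cup E}(e)\downarrow$; search $\emptyset'$-computably for such an $E$ and pass to $(F\cup E,\,Y\cap(u,\infty))$, where $u$ bounds both $\max(F\cup E)$ and the use of the computation, so that every further extension has $\Phi_e^G(e)\downarrow$. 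The sequence of conditions is $\emptyset'$-computable and, by (c), the reservoirs stay infinite subsets of $X$ so that $\Scal_n$-steps never stall; hence $G$ is an infinite subset of $X\cap A$, and $G'$ is read off from which branch of each $\Rcal_e$ was taken, so $G$ is low.

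The main obstacle is precisely the claimed dichotomy in the $\Rcal_e$ step: when divergence cannot be forced, a convergent extension of $F$ must be available \emph{inside} $A$. The failure $\mathcal D^{\uparrow}=\emptyset$ says only that the $\Sigma^0_1$ formula $\psi(E)\equiv[\max F<\min E\wedge\Phi_e^{F\cup E}(e)\downarrow]$ is densely essential along $Y$, and such $\psi$-witnesses need not meet $A$, whereas it is the $\varphi$-witnesses that meet $A$; the two density properties do not combine for free. The plan is to force an interaction by passing to the monotone $\Sigma^0_1(X)$ formula $\varphi_1(D)\equiv\varphi(D)\wedge(\exists E\subseteq D)\,\psi(E)$ and checking that, because $\mathcal D^{\uparrow}=\emptyset$, $\varphi_1$ remains densely essential along $Y$ (combine a $\varphi$-witness $D_1\subseteq Z$ with a $\psi$-witness beyond it, using monotonicity of $\varphi$ on $D_1\cup E$), and then to locate a $\varphi_1$-witness whose $\psi$-part can be taken inside $A$ — the point at which the sparseness of $A$ within the current reservoir must be controlled, perhaps by choosing the block encoding adaptively, or by inserting a Seetapun-style secondary case split on whether the reservoir admits a partition, found inside $\Mcal$ via $\WKL$, into two pieces on each of which the computation $\Phi_e^{F\cup E}(e)$ is indecisive as $E$ ranges over finite subsets. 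Making this dichotomy precise is where the real work lies; the remaining bookkeeping, the verification that $G$ is infinite and low, and the $\WKL$-in-$\Mcal$ manipulations are routine given the basis-theorem machinery already in play.
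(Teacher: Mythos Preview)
Your framework is right and you have correctly isolated the obstacle: in the $\Rcal_e$ step, emptiness of $\mathcal D^{\uparrow}$ only tells you $\psi$ is densely essential, and $\psi$-witnesses need not meet $A$. Your proposed fix via $\varphi_1=\varphi\wedge(\exists E\subseteq D)\psi(E)$ does show $\varphi_1$ is densely essential, but as you note, a $\varphi_1$-witness $D$ meeting $A$ gives no control over where the $\psi$-part $E\subseteq D$ sits. So the proposal stops short of a proof.

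The missing idea is simpler than the secondary case splits you suggest: choose the \emph{block encoding itself} using $\varphi$. Since $\varphi$ is $\Mcal$-densely essential within $X$ and $Y\subseteq X$ lies in $\Mcal$, you can $X\oplus Y$-computably enumerate non-empty finite sets $D_0,D_1,\ldots\subseteq Y$ with $\max D_n<\min D_{n+1}$ and $\varphi(D_n)$ for all $n$. Now form the finitely-branching tree $T$ of strings $\alpha$ with $\alpha(n)\in D_n$, and let $U=\{\alpha\in T:(\forall E\subseteq\ran(\alpha))\,\Phi_e^{F\cup E}(e)\uparrow\}$. The point is that each $D_n$ meets $A$ by property (a), so $T$ has a path whose range lies entirely inside $A$. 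Hence if $U$ is finite (no path), some initial segment of that $A$-path lies in $T\setminus U$, yielding a finite $E\subseteq A\cap Y$ with $\Phi_e^{F\cup E}(e)\downarrow$ --- exactly what you need. If $U$ is infinite, take a path in $\Mcal$ via $\WKL$ and use its range as the new reservoir, forcing divergence. This is the paper's argument; once the blocks are $\varphi$-witnesses, the dichotomy you were struggling with resolves itself automatically, with no further combinatorics.
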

\begin{proof}
Let $X \in \Mcal$ be an infinite set and $\varphi(D)$ be a $\Sigma^0_1(X)$ formula
witnessing that $A$ is not densely $\Mcal$-hyperimmune. Thus $\varphi$ is $\Mcal$-densely essential within $X$, and for every set $D \subseteq X$ such that $\varphi(D)$ holds, we have $D \cap A \neq \emptyset$. We build a $\Delta^0_2$ decreasing sequence of Mathias conditions
\[
	(\emptyset, X) = (F_0, X_0) \geq (F_1, X_1) \geq \cdots
\]
such that $F_e \subseteq A$ and $X_e \in \Mcal$ for all $e$. We then take $G = \bigcup_e F_e$.

The sequence is defined inductively. Suppose we have already defined $(F_e, X_e)$. Let $D_0, D_1, \ldots \subseteq X_e$ be an infinite $X \oplus X_e$-computable sequence of non-empty finite sets such that $\max D_n < \min D_{n+1}$ and $\varphi(D_n)$ holds for each $n \in \omega$. Such a sequence exists since $\varphi$ is $\Mcal$-densely essential within $X$.  Let $T$ be the $X \oplus X_e$-computable tree of all strings $\alpha \in \omega^{<\omega}$ such that $\alpha(n) \in D_n$ for each $n < |\sigma|$. Thus, $T$ is an infinite, $X \oplus X_e$-computable, $X \oplus X_e$-computably bounded tree, and (the range of) every path through $T$ is an infinite set. Moreover, our assumption on $\varphi$ implies that there is such a path that is a subset of $A$. Now, define
\[
	U = \{ \alpha \in T : (\forall E \subseteq \ran(\alpha))~\Phi_e^{F_e \cup E}(e) \diverges \}.
\]
We have two cases:

\medskip
\noindent \textbf{Case 1:} \emph{$U$ is finite.} Since $T$ has a path that is an infinite subset of $A$ while $U$ has no paths, we can fix $\alpha \in T \setminus U$ such that $\ran(\alpha) \subseteq A$. Then we can choose a finite set $E \subseteq \ran(\alpha)$
such that $\Phi_e^{F_e \cup E}(e) \converges$. Since $A$ is not densely $\mathcal{M}$-hyperimmune and $X_e \subseteq X$, we can find $x > \max E$ in $A \cap X_e$. Define $F_{e+1} = F_e \cup E \cup \{x\}$ and $X_{e+1} = X_e \cap (x,\infty)$. The condition $(F_{e+1}, X_{e+1})$ now forces $e \in G'$.

\medskip
\noindent \textbf{Case 2:} \emph{$U$ is infinite.} Since $U \in \mathcal{M}$, we can uniformly $\emptyset'$-computably find (a lowness index for) a path $Y$ through $U$ in $\mathcal{M}$. Since $A$ is not densely $\mathcal{M}$-hyperimmune and $Y \subseteq X$, we can find $x \in A \cap Y$. Define $F_{e+1} = F_e \cup \{x\}$ and $X_{e+1} = Y \cap (x,\infty)$. Then the condition $(F_{e+1}, X_{e+1})$ forces $e \notin G'$.

\medskip
\noindent This completes the construction of our sequence of conditions, which is clearly a $\Delta^0_2$ sequence. As the case distinction above is uniform in $\emptyset'$, it follows that $G$ is low. And since we add at least one new element to $G$ at each stage, $G$ is infinite. This completes the proof.
\end{proof}

The following immediate corollary points, in some sense, to the narrowness of the class of examples of $\Delta^0_2$ sets having no low infinite subsets in them or their complements.

\begin{corollary}
	Let $A$ be a $\Delta^0_2$ set with no low infinite subset in it or its complement. Then neither $A$ nor $\overline{A}$ is hyperimmune, but each is $\mathcal{M}$-densely hyperimmune, for every Scott ideal $\mathcal{M}$ coded by a low set.	
\end{corollary}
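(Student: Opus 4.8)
The plan is to obtain both conclusions as contrapositives of results already available in this section, exploiting the symmetry between $A$ and $\overline{A}$ (both of which are $\Delta^0_2$). So the "construction" here is really just a matter of correctly packaging the previously established facts.

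For the first assertion, that neither $A$ nor $\overline{A}$ is hyperimmune, I would invoke the case analysis of the Hirschfeldt--Jockusch--Kjos-Hanssen--Lempp--Slaman proof of Theorem~\ref{T:Delta2incomplete} recalled just above: when the $\Delta^0_2$ set $A$ is hyperimmune, their construction actually yields a low infinite subset of $\overline{A}$ (\cite[Corollary 4.9]{HJKLS-2008}). Taking the contrapositive, and using the hypothesis that $\overline{A}$ has no low infinite subset, we conclude that $A$ is not hyperimmune. Running the same fact with the $\Delta^0_2$ set $\overline{A}$ in place of $A$: were $\overline{A}$ hyperimmune, there would be a low infinite subset of $\overline{\overline{A}} = A$, contradicting the hypothesis; hence $\overline{A}$ is not hyperimmune either.

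For the second assertion, I would fix an arbitrary Scott ideal $\Mcal$ coded by a low set and apply Theorem~\ref{thm:incomplete-delta2-not-densely-hyperimmune}, which states exactly that a $\Delta^0_2$ set that is not densely $\Mcal$-hyperimmune has a low infinite subset. Its contrapositive gives that any $\Delta^0_2$ set with no low infinite subset is densely $\Mcal$-hyperimmune. Since by hypothesis neither $A$ nor $\overline{A}$ (both $\Delta^0_2$) has a low infinite subset, each is densely $\Mcal$-hyperimmune; and since $\Mcal$ was an arbitrary Scott ideal coded by a low set, this is the full statement.

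I do not expect a genuine obstacle: the corollary is an immediate repackaging of Theorem~\ref{thm:incomplete-delta2-not-densely-hyperimmune} together with the cited behaviour of the HJKLS construction. The only points worth an explicit (and trivial) remark are that Theorem~\ref{thm:incomplete-delta2-not-densely-hyperimmune} fixes a single ideal $\Mcal$ in its hypothesis, so the universal quantifier over Scott ideals coded by low sets in the corollary is obtained simply by applying the contrapositive once for each such $\Mcal$, and that the swap of $A$ with $\overline{A}$ is legitimate precisely because the relevant theorems are stated for arbitrary $\Delta^0_2$ sets.
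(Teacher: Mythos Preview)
Your proposal is correct and matches the paper's intent exactly: the corollary is stated as an immediate consequence of Theorem~\ref{thm:incomplete-delta2-not-densely-hyperimmune} together with \cite[Corollary 4.9]{HJKLS-2008}, and the paper gives no further proof. Your contrapositive argument, applied once to $A$ and once to $\overline{A}$ for each part, is precisely the intended derivation.
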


In conclusion, we note that we do not know if Theorem \ref{thm:incomplete-delta2-not-densely-hyperimmune} could be used as part of a new proof of Theorem \ref{T:Delta2incomplete}, i.e., if there is such a proof where the case distinction could be based on whether or not $A$ is densely $\Mcal$-hyperimmune, rather than just plain hyperimmune, as in the original proof. More specifically, we do not know the answer to the following question:

\begin{question}
	Fix a Scott ideal $\Mcal$ coded by a low set and a $\Delta^0_2$ set $A$ that is densely $\Mcal$-hyperimmune. Must $\overline{A}$ have an incomplete $\Delta^0_2$ infinite subset?
\end{question}

\section{Cohesiveness and variants of hyperimmunity}\label{S:hyperimmunity}


In this section, we study variations of hyperimmunity notions to broaden the class of computable instances of $\srt^2_2$ that are known to have solutions that do not compute a solution to every computable instance of $\COH$. As mentioned above, by Jockusch and Stephan's result~\cite[Theorem 2.1]{JS-1993}, these are precisely the computable instances of $\SRT^2_2$ having solutions $H$ satisfying $\deg(H)' \not\gg {\bf 0}'$.

For the purposes of the definition below, we say a collection $\mathcal{C}$ of sets is \emph{downward closed} if it is downward closed under inclusion. Also, we use \emph{array} to mean a sequence of canonical indices of finite sets $D_0,D_1,\ldots$ such that $\lim_n \min D_n = \infty$.

\begin{definition}
Fix $X,Z \subseteq \omega$. A downward closed collection $\Ccal$ of finite sets is \emph{$Z$-hyperimmune within $X$} if for every $Z$-computable array $D_0, D_1, \ldots \subseteq X$, there is some $n \in \omega$ such that $D_n \in \Ccal$. 
\end{definition}

Whenever $X = \omega$, we simply say that $\Ccal$ is $Z$-hyperimmune.
This general notion can be used to define many notions of hyperimmunity. 
For example, we can say that a set $A$ is \emph{$Z$-hyperimmune within $X \subseteq \omega$} if $\{ F : F \subseteq \overline{A} \}$ is $Z$-hyperimmune within $X$. When $X = \omega$, this agrees with the usual definition of $A$ being $Z$-hyperimmune (see \cite[Definition 5.3.1~(iii)]{Soare-2016}).

Our starting point is the following theorem, which is a variation on the aforementioned result of Hirschfeldt, Jockusch, Kjos-Hanssen, Lempp, and Slaman~\cite[Corollary 4.9]{HJKLS-2008} that the complement of any $\Delta^0_2$ hyperimmune set $A$ has an infinite subset of low degree.

\begin{theorem}\label{thm:delta2-joint-hyperimmunity-low}
Fix $X \subseteq \omega$. Let $\Ccal_0$ and $\Ccal_1$ be $\Delta^{0,X}_2$ downward closed collections of finite sets such that $\Ccal_0 \cup \Ccal_1$ is hyperimmune within $X$, and let $f$ be a $\Delta^{0,X}_2$ function from the set of (canonical indices of) all finite sets to $\omega$. There is an $i < 2$ and a sequence of non-empty finite sets $F_0,F_1,\ldots \in \mathcal{C}_i$ as follows:
\begin{itemize}
	\item $F_s \subseteq X$ for all $s$;
	\item $\max F_s < \min F_{s+1}$ for all $s$;
	\item $f(F_s) <  \min F_{s+1}$ for all $s$;
	\item there is an infinite set $G \subseteq \bigcup_s F_s$ that is low over $X$.
\end{itemize}
\end{theorem}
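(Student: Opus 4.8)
The plan is to mirror the structure of the proofs of Theorems~\ref{Thm:newthmatthelastminute} and~\ref{thm:incomplete-delta2-not-densely-hyperimmune}: first manufacture a suitable Scott ideal, then run an $X'$-computable priority construction with movable markers and reservoir reversion inside it. For the first step I would, building computably in $X'$ a descending sequence of nonempty $\Pi^{0,X}_1$ classes in the style of Lemmas~\ref{thm:low-incomplete-basis-theorem} and~\ref{thm:low-hi-preserving-basis-theorem}, obtain a countable Scott ideal $\Mcal=\{Z_0,Z_1,\ldots\}$ (so $\Mcal\models\wkl$ and $X\in\Mcal$) with $\bigoplus_i Z_i$ low over $X$ and, crucially, with $\Ccal_0\cup\Ccal_1$ still hyperimmune within $X$ relative to $\bigoplus_i Z_i$. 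The bookkeeping interleaves the standard requirements (force the jump of $\bigoplus_i Z_i$; adjoin a path through the next tree to close off under $\wkl$) with, for each functional $\Phi$, a requirement preventing $\Phi^{\bigoplus_i Z_i}$ from being a \emph{bad array} within $X$, i.e.\ a total array all of whose blocks miss $\Ccal_0\cup\Ccal_1$: one restricts to a subclass forcing $\Phi$ either to diverge somewhere, or to output an index that is not a finite subset of $X$, or to output a block in $\Ccal_0\cup\Ccal_1$, and if none of these is achievable the remaining possibility contradicts the hyperimmunity hypothesis on $\Ccal_0\cup\Ccal_1$.

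Next I would fix the value of $i$ by a case split on $\Mcal$. \emph{Case $0$:} there is an infinite $X_0\in\Mcal$ with $X_0\subseteq X$ such that $\Ccal_0$ is hyperimmune within $X_0$ relative to $\bigoplus_i Z_i$; put $i=0$ and take this $X_0$ as the starting reservoir. \emph{Case $1$:} otherwise, put $i=1$ and $X_0=X$. In Case $1$, for every infinite $Y\in\Mcal$ with $Y\subseteq X$ there is a $\bigoplus_i Z_i$-computable array within $Y$ no block of which lies in $\Ccal_0$, and since $\Ccal_0\cup\Ccal_1$ is hyperimmune within $X$ relative to $\bigoplus_i Z_i$, that array together with each of its tails has a block in $\Ccal_1$, so infinitely many of its blocks lie in $\Ccal_1$. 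The common payoff of both cases is: given any infinite reservoir $Y\in\Mcal$ with $Y\subseteq X$, one can, with oracle $X'$, locate a finite $F\subseteq Y$ with $F\in\Ccal_i$ and $\min F$ arbitrarily large (in Case $0$ via the singleton array of $Y$, in Case $1$ via a block of the array just described, which requires guessing its index), and this property is inherited by every infinite $Y'\in\Mcal$ with $Y'\subseteq Y$.

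Finally I would carry out an $X'$-computable priority construction, entirely parallel to that of Theorem~\ref{Thm:newthmatthelastminute}, building a descending sequence of conditions $(F^{(s)}_0,\ldots,F^{(s)}_{k_s-1};X^{(s)})$ in which the $F^{(s)}_j\in\Ccal_i$ are finite subsets of $X$ with $\max F^{(s)}_j<\min F^{(s)}_{j+1}$ and $f(F^{(s)}_j)<\min F^{(s)}_{j+1}$, and $X^{(s)}\in\Mcal$ is an infinite reservoir $\subseteq X$ lying entirely above these blocks; simultaneously one builds $G\subseteq\bigcup_j F_j$. The requirements are $\Scal_n:\ |G|>n$, met by appending a new block $F\in\Ccal_i$ drawn from the current reservoir via the payoff above, adding an element of $F$ to $G$, and shrinking the reservoir past $\max F$ and $f(F)$; and $\Rcal_e:$ decide $e\in G'$, met exactly as in Theorems~\ref{Thm:newthmatthelastminute} and~\ref{thm:incomplete-delta2-not-densely-hyperimmune}, by forming the $\Pi^0_1$ class of ``$e$-splittings'' of the current reservoir and, using $\Mcal\models\wkl$ and $X'$, passing to a low-over-$X$ member of $\Mcal$ inside it as the new reservoir (still an infinite subset of $X$, so the block-finding payoff persists), with movable markers and reservoir reversion absorbing incorrect guesses about which side of a splitting, or which array index, is the good one. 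Since the construction is uniformly $X'$-computable and each $\Rcal_e$ is decided, $G$ is low over $X$; each $\Scal_n$ makes $G$ infinite; and by construction $G\subseteq\bigcup_s F_s$ with every $F_s\in\Ccal_i$ and the stated growth conditions.

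The main obstacle is the hyperimmunity-preserving low basis theorem of the first step: in contrast with the function case of Lemma~\ref{thm:low-hi-preserving-basis-theorem}, the objects being controlled are arrays, which branch, so care is needed to guarantee that forcing ``$\Phi$ names a bad array'' really is incompatible with the hypothesis (one must, in effect, keep the $\Pi^{0,X}_1$ classes sufficiently ``splitting'' that the original $X$-hyperimmunity of $\Ccal_0\cup\Ccal_1$ can still be invoked). A secondary point is the finite-injury bookkeeping needed in Case $1$ to guess, and revert upon, the index of the $\bigoplus_i Z_i$-computable array furnishing the $\Ccal_1$-blocks, dovetailed with the reservoir reversions already present for the $\Rcal_e$ requirements.
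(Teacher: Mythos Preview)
Your proposal takes a far heavier route than the paper, and carries genuine gaps. The paper's proof uses no Scott ideal, no Mathias reservoirs, no priority argument, no markers, and no reversion. It is a direct, symmetric Cohen-forcing construction: computably in $X'$, build two sequences of binary strings $\sigma_{0,s} \prec \sigma_{0,s+1}$ and $\sigma_{1,s} \prec \sigma_{1,s+1}$, with the new $1$'s added on side $i$ at stage $s$ forming a block $E_{i,s} \in \Ccal_i$ and with enough $0$-padding to push past $f(E_{i,s})$. At stage $s = \langle e_0,e_1\rangle$ one searches for a side $i$ on which either some extension $\rho$ with $\{|\tau_i|+x:\rho(x)=1\}\in\Ccal_i$ makes $\Phi_{e_i}^{\tau_i\rho}(e_i)\converges$, or some $n$ makes $\Phi_{e_i}^{\tau_i 0^n\rho}(e_i)\diverges$ for all $\rho$. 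If neither alternative holds on either side, then for every $n$ one can computably produce $\rho_0,\rho_1$ with convergence on both sides; the union of their $1$-sets forms a computable array within $X$, and hyperimmunity of $\Ccal_0\cup\Ccal_1$ gives an $n$ with that union in $\Ccal_0\cup\Ccal_1$, hence (by downward closure) one of the $\rho_i$ already witnesses the first alternative --- contradiction. So the search succeeds, one side gets its jump forced at each pair-stage, and a pigeonhole on sides yields the desired $i$ and low $G_i$. The hyperimmunity hypothesis is consumed \emph{directly} at the forcing step, not preserved through an auxiliary ideal.

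Against this, your plan has two real problems. First, the ``hyperimmunity-preserving low basis theorem'' for downward-closed collections that you need in Step~1 is not the function case of Lemma~\ref{thm:low-hi-preserving-basis-theorem}, and your sketch does not establish it: when you cannot force $\Phi^P$ to diverge or output an invalid array entry, the outputs $\Phi^P_e(n)$ vary over the class, and amalgamating them into a single computable array with no term in $\Ccal_0\cup\Ccal_1$ is exactly where downward closure cuts the wrong way (a union of sets outside $\Ccal$ need not be outside $\Ccal$). Second, your $\Rcal_e$ strategy (``form the $\Pi^0_1$ class of $e$-splittings and pass to a member of $\Mcal$'') is borrowed from constructions that diagonalize against computing a fixed set, not from ones that force the jump; it is not clear how this decides $e\in G'$ while keeping all future blocks in $\Ccal_i$. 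The symmetric two-string trick in the paper's proof is precisely what lets one force the jump on \emph{some} side without ever needing to know in advance which $\Ccal_i$ will supply the block --- that is the idea you are missing.
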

\begin{proof}
We prove the result for $X = \omega$. The general case follows by a straightforward relativization of our proof. Uniformly in $\emptyset'$, we build a sequence of pairs of binary strings
\[
	(\sigma_{0,0}, \sigma_{1,0}), (\sigma_{0,1}, \sigma_{1,1}), \dots.
\]
For each $i$, let $E_{i,0} = \emptyset$, and for each $s$ let
\[
	E_{i,s+1} = \{|\sigma_{i,s}| \leq x < |\sigma_{i,s+1}| : \sigma_{i,s+1}(x) = 1\}.
\]
Also, let $G_i = \bigcup_s E_{i,s}$.

We will ensure that for each $i$ and $s$ the following hold:
\begin{itemize}
	\item $\sigma_{i,s} \prec \sigma_{i,s+1}$;
	\item $E_{i,s} \in \Ccal_i$;
	\item $f(E_{i,s}) < \min E_{i,t}$ for all $t > s$ for which $E_{i,t}$ is non-empty.
\end{itemize}
We let $\seq{\sigma_{0,0},\sigma_{1,0}} = \seq{\emptyset,\emptyset}$, and then proceed by stages. We define $\sigma_{i,s+1}$ at stage $s$, and ensure that at stage $s=\seq{e_0,e_1}$ there is an $i < 2$ such that
\[
	{\Phi^{\sigma_{i,s+1}}_{e_i}(e_i)\converges} \vee {(\forall \rho \in 2^{\omega})~\Phi^{\sigma_{i,s+1}\rho}_{e_i}(e_i) \diverges}.
\]
It follows that there is an $i < 2$ such that for each $e$ there is an $s$ so that
\[
	{\Phi^{\sigma_{i,s}}_{e}(e)\converges} \vee {(\forall \rho \in 2^{\omega})~\Phi^{\sigma_{i,s}\rho}_{e}(e) \diverges}.
\]
Hence, $G_i$ is low. Moreover, $G_i$ must be infinite. To see this, suppose not, and let $k = \max G_i$. Consider an $e \in \omega$ such that for all oracles $X$ and inputs $x$ we have that $\Phi^X_e(x) \converges$ if and only if $X \cap (k, \infty) \neq \emptyset$. Then for all $s$ we have that $\Phi^{\sigma_{i,s}}_e(e) \diverges$, yet there is always a $\rho \in 2^{<\omega}$ such that $\Phi^{\sigma_{i,s} \rho}_e(e) \converges$. This is a contradiction. Now since $G \subseteq \bigcup_s E_{i,s}$, it follows that we can computably pick out those $E_{i,s}$ that are non-empty, renaming the new sequence $F_0,F_1,\dots$. Taking this sequence together with $G = G_i$ yields the theorem.

We have thus only to construct the $\sigma_{i,s}$. At stage $s =\langle e_0,e_1\rangle$, assume inductively that we have already defined $(\sigma_{0,s},\sigma_{1,s})$. For each $i$, let $\tau_i = \sigma_{i,s}0^{f(E_{i,s})+1}$, so that $\sigma_{i,s} \prec \tau_i$. Now, computably in $\emptyset'$, we search for an $i < 2$ such that one of the following holds:
\begin{enumerate}
	\item there is some finite string $\rho \in 2^{<\omega}$ such that $\{|\tau_i| + x : \rho(x) = 1 \} \in \Ccal_i$ and $\Phi^{\tau_i\rho}_{e_i}(e_i)\converges$;
	\item there is some $n \in \omega$ such that $\Phi^{\tau_i0^n\rho}_{e_i}(e_i)\diverges$ for all $\rho \in 2^{<\omega}$.
\end{enumerate}
In the first case, we let $\sigma_{i,s+1} = \tau_i\rho$, and in the second case, we let $\sigma_{i,s+1} = \tau_i0^n$. We let $\sigma_{1-i,s+1} = \sigma_{1-i,s}0$. Clearly, these extensions are of the desired sort. Thus, the only thing left is to show that the search above must succeed. Indeed, if (2) fails for each $i < 2$, then for each $n$ we can computably find strings $\rho_0,\rho_1$ such that $\Phi^{\tau_i0^n\rho_i}_{e_i}(e_i)\converges$ for each $i < 2$. Let
\[
	D_{i,n} = \{ |\tau_i| + n \leq x <  |\tau_i| + n + |\rho_i| : \rho_i(x - |\tau_i| - n) = 1 \},
\]
and let $D_n = D_{0,n} \cup D_{1,n}$. This defines a computable array $D_0, D_1, \ldots$, so by hyperimmunity of $\mathcal{C}$, we must have $D_n \in \mathcal{C}$ for some $n$. Fix $i < 2$ so that $D_n \in \mathcal{C}_i$. By downward closure, we also have $D_{i,n} \in \mathcal{C}_i$. But then $\rho = 0^n\rho_i$ witnesses that (1) above holds for $i$, which proves the claim. This completes the proof of Theorem~\ref{thm:delta2-joint-hyperimmunity-low}.
\end{proof}


The following special case of the theorem is perhaps the more noteworthy result here, though we shall make use of the full technical version in our proof of Proposition \ref{P:DHYT} below.

\begin{corollary}
	Let $\Ccal_0$ and $\Ccal_1$ be $\Delta^0_2$ downward closed collections of finite sets such that $\Ccal_0 \cup \Ccal_1$ is hyperimmune. Then there is a low infinite set $H$ and an $i < 2$ such that $H = \bigcup_s D_s$ for some $D_0,D_1,\ldots \in \Ccal_i$.
\end{corollary}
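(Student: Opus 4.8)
The plan is to obtain this as an immediate consequence of Theorem~\ref{thm:delta2-joint-hyperimmunity-low}. I would apply that theorem with $X = \omega$ and with $f$ the constant function $0$, which is trivially $\Delta^0_2$; note that under the stated convention ``$\Ccal_0 \cup \Ccal_1$ is hyperimmune'' is exactly ``$\Ccal_0 \cup \Ccal_1$ is hyperimmune within $\omega$'', and $\Delta^{0,\omega}_2 = \Delta^0_2$, so the hypotheses of the theorem are met. This produces an $i < 2$, a sequence of non-empty finite sets $F_0, F_1, \ldots \in \Ccal_i$ with $\max F_s < \min F_{s+1}$ for all $s$, and an infinite set $G \subseteq \bigcup_s F_s$ that is low.

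It then remains only to rewrite $G$ as a union of members of $\Ccal_i$. Since the condition $\max F_s < \min F_{s+1}$ makes the $F_s$ pairwise disjoint, and $G \subseteq \bigcup_s F_s$, we have $G = \bigcup_s (G \cap F_s)$. Setting $D_s = G \cap F_s$, each $D_s$ is a subset of $F_s \in \Ccal_i$, hence $D_s \in \Ccal_i$ by downward closure (in particular $\Ccal_i$ is nonempty, so the empty set, which some $D_s$ may equal, also lies in $\Ccal_i$). Taking $H = G$ then gives a low infinite set with $H = \bigcup_s D_s$ and $D_0, D_1, \ldots \in \Ccal_i$, as desired. The entire mathematical content sits in Theorem~\ref{thm:delta2-joint-hyperimmunity-low}; the only ``step'' here is the downward-closure trick that converts the inclusion $G \subseteq \bigcup_s F_s$ into the required exact decomposition, so I do not anticipate any real obstacle.
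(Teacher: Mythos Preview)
Your proposal is correct and is exactly the intended derivation: the paper presents this corollary as a direct special case of Theorem~\ref{thm:delta2-joint-hyperimmunity-low} without further argument, and your choice of $X=\omega$, $f\equiv 0$, together with the downward-closure step $D_s = G\cap F_s$, is precisely how one extracts the stated conclusion from that theorem.
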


\noindent Of particular interest is the case when $\Ccal_0 = \{F \subseteq \omega : F \text{ is finite} \wedge F \subseteq A \}$ and $\Ccal_1 = \{F \subseteq \omega : F \text{ is finite} \wedge F \subseteq \overline{A} \}$.

\begin{corollary}
For every $\Delta^0_2$ set $A$ such that the collection of finite subsets of $A$ and $\overline{A}$ is hyperimmune, there is a low infinite subset $H$ of $A$ or $\overline{A}$.
\end{corollary}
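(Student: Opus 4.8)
The plan is simply to specialize the preceding corollary to the two collections canonically attached to $A$. Concretely, I would take $\Ccal_0 = \{ F : F \text{ is finite and } F \subseteq A \}$ and $\Ccal_1 = \{ F : F \text{ is finite and } F \subseteq \overline{A} \}$, as suggested in the remark above. Both are evidently downward closed under inclusion, and because $A$ is $\Delta^0_2$, deciding whether a given canonical index names a finite set contained in $A$ (respectively, in $\overline{A}$) is a finite conjunction of $\Delta^0_2$ questions, hence $\Delta^0_2$; so $\Ccal_0$ and $\Ccal_1$ are $\Delta^0_2$ collections.

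Next I would observe that $\Ccal_0 \cup \Ccal_1$ is exactly the collection of all finite sets that are subsets of $A$ or of $\overline{A}$, which is hyperimmune by the hypothesis of the corollary. Thus the hypotheses of the preceding corollary are met, and it produces a low infinite set $H$ together with an $i < 2$ such that $H = \bigcup_s D_s$ for some $D_0, D_1, \ldots \in \Ccal_i$. Finally I would split on $i$: if $i = 0$ then every $D_s \subseteq A$, so $H = \bigcup_s D_s \subseteq A$; if $i = 1$ then every $D_s \subseteq \overline{A}$, so $H \subseteq \overline{A}$. Either way $H$ is a low infinite subset of $A$ or of $\overline{A}$, which is the conclusion.

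There is essentially no obstacle here: all the combinatorial and computability-theoretic work is already carried out in Theorem~\ref{thm:delta2-joint-hyperimmunity-low} and the corollary deduced from it. The only points that need a moment's thought are the bookkeeping ones noted above --- that $\Delta^0_2$-ness of $A$ transfers to $\Ccal_0$ and $\Ccal_1$, and that ``the collection of finite subsets of $A$ and $\overline{A}$'' is literally $\Ccal_0 \cup \Ccal_1$ --- after which the statement follows at once.
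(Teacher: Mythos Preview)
Your proposal is correct and matches the paper's approach exactly: the paper presents this corollary immediately after remarking that the case of particular interest in the preceding corollary is precisely $\Ccal_0 = \{F : F \text{ finite},\ F \subseteq A\}$ and $\Ccal_1 = \{F : F \text{ finite},\ F \subseteq \overline{A}\}$, and gives no further proof. Your additional verification of the $\Delta^0_2$-ness and downward closure of $\Ccal_0,\Ccal_1$ and the identification of $\Ccal_0 \cup \Ccal_1$ with the hypothesized hyperimmune collection simply makes explicit what the paper leaves implicit.
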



In the context of the $\SRT^2_2$ vs.\ $\COH$ problem, the fact that the complement of a hyperimmune $\Delta^0_2$ set always has an infinite low subset stands out next to the fact that no low set can compute a solution to every computable instance of $\COH$. This motivates the following definition, and makes the subsequent result surprising. 

\begin{principle}[$\Delta^0_2$ hyperimmunity ($\dhyp_k$)]
	For every set $Z$ and every $\Delta^{0,Z}_2$ $k$-partition $A_0 \sqcup \dots \sqcup A_{k-1} = \omega$, there is some $i < k$ and an infinite set $X$ such that $\overline{A}_i$ is $Z \oplus X$-hyperimmune within~$X$.
\end{principle}

%
%
%
%
%
%

\begin{proposition}\label{lem:coh-vs-srt22-dhyp2}
$\coh \cred \srt^2_2$ if and only if $\coh \cred \dhyp_2$.
\end{proposition}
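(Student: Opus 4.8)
## Proof proposal for Proposition~\ref{lem:coh-vs-srt22-dhyp2}

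\textbf{Overview.} The plan is to prove the two implications separately, and in both directions to factor through the ``coloring of singletons'' reformulations already discussed in the text: a $\D^2_2$-instance is a $\Delta^0_2$ $2$-partition $A_0 \sqcup A_1 = \omega$ (equivalently, a stable coloring of pairs, via the limit lemma), and a solution is an infinite subset of $A_0$ or $A_1$. The key observation connecting $\COH$ to these notions is Jockusch--Stephan~\cite[Theorem 2.1]{JS-1993}: a degree $\mathbf{a}$ computes a solution to every computable instance of $\COH$ iff $\mathbf{a}' \gg \mathbf{0}'$, and relativizing, $Z \oplus H$ computes a solution to every $Z$-computable $\COH$-instance iff $(Z \oplus H)' \gg Z'$. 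So ``$H$ does \emph{not} help compute cohesive sets (over $Z$)'' is the condition $(Z\oplus H)' \not\gg Z'$, which in turn, by the characterization of PA degrees via DNC functions, amounts to $Z\oplus H$ computing, relative to $Z'$, no function DNC relative to $Z'$ --- a hyperimmunity-flavored condition on the solution. This is exactly the bridge that $\dhyp_2$ is designed to encode.

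\textbf{Direction $\COH \cred \dhyp_2 \implies \COH \cred \SRT^2_2$.} This is the easy direction and should follow by composing reductions. Since $\SRT^2_2 \equiv_{\mathrm{c}} \D^2_2$ over $\omega$-models (\cite[Lemma 3.5]{CJS-2001}), it suffices to show $\dhyp_2 \cred \D^2_2$ --- indeed, $\dhyp_2 \scred \D^2_2$, essentially trivially: given an instance of $\dhyp_2$, namely a set $Z$ and a $\Delta^{0,Z}_2$ partition $A_0 \sqcup A_1 = \omega$, we feed the corresponding stable $Z$-computable coloring $c \colon [\omega]^2 \to 2$ (with $\lim_y c(x,y) = i$ iff $x \in A_i$) as a $\D^2_2$-instance; any infinite homogeneous/limit-homogeneous set $H \subseteq A_i$ is then, with $X := H$, a witness that $\overline{A_i}$ is $Z \oplus X$-hyperimmune within $X$, for the trivial reason that $X \cap \overline{A_i} = \emptyset$, so \emph{no} element of any array drawn from $X$ can land in a subset of $\overline{A_i}$ --- wait, one must be a touch careful: hyperimmunity of $\{F : F \subseteq \overline{A_i}\}$ within $X$ requires every $Z\oplus X$-computable array $D_0, D_1, \ldots \subseteq X$ to have \emph{some} $D_n$ contained in $\overline{A_i}$; since each $D_n \subseteq X \subseteq A_i$ is nonempty, in fact $D_n \cap \overline{A_i} = \emptyset \ne D_n$ would \emph{fail} this. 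So actually the correct reading is that $\dhyp_2$ asks for $\overline{A_i}$ to \emph{be} $Z\oplus X$-hyperimmune within $X$ in the sense that every array in $X$ meets $\Ccal = \{F : F\subseteq \overline{A_i}\}$; taking $X \subseteq A_i$ with $X$ infinite and using that arrays have $\min D_n \to \infty$, one picks $X$ thin enough that it admits \emph{no} infinite $Z\oplus X$-computable array at all inside it, e.g.\ $X$ of sufficiently fast growth relative to $Z\oplus X$ --- this is where one uses that a generic or sparse homogeneous set does the job. I would handle this by choosing $H$ via the standard cone-avoiding/sparseness argument so that $H$ itself is hyperimmune enough, making the array condition vacuously satisfiable. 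The composition $\COH \cred \dhyp_2 \cred \D^2_2 \cequiv \SRT^2_2$ then closes this direction.

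\textbf{Direction $\COH \cred \SRT^2_2 \implies \COH \cred \dhyp_2$.} This is the substantive direction and the expected main obstacle. Given an instance $(Z, A_0 \sqcup A_1 = \omega)$ of $\dhyp_2$, I want to produce a $\dhyp_2$-solution using an oracle of the form $Z \oplus C$ where $C$ solves some $Z$-computable $\COH$-instance. The idea is: relativize everything to $Z$, form from the $\Delta^{0,Z}_2$ partition the associated stable $Z$-computable $c \colon [\omega]^2 \to 2$, an instance of $\SRT^2_2$; apply the hypothesized reduction $\COH \cred \SRT^2_2$ \emph{backwards} --- more precisely, the universal-instance machinery of Section~\ref{S:locks} says $\COH$ has a $Z$-computable universal instance $\vec R$, so it is enough to feed $\vec R$ through $\COH \cred \SRT^2_2$: $\vec R$ computes (relative to $Z$) an $\SRT^2_2$-instance $\widehat c$, a solution $\widehat H$ to $\widehat c$ computes a solution $C$ to $\vec R$, and $C$ is universal, so $(Z \oplus C)' \gg Z'$. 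I then need: from $\COH \cred \SRT^2_2$ applied to the \emph{given} coloring $c$, a solution $H \subseteq A_i$ with $(Z \oplus H)' \not\gg Z'$ --- but $\COH \cred \SRT^2_2$ gives the wrong arrow. The correct move, as in the Flood/Patey-style arguments, is to run the reduction $\COH \cred \SRT^2_2$ against the universal COH instance to extract, for the specific solution $\widehat H$ it names, the information that $\widehat H$'s degree is ``COH-strong''; then combine this with the hypothesis contrapositively. I would instead argue: if $\COH \cred \SRT^2_2$, then in particular for the universal $\vec R$ there is a $Z$-computable stable coloring $\widehat c$ such that every solution to $\widehat c$ is COH-strong over $Z$; feed the partition $A_0 \sqcup A_1$ as the \emph{product} coloring $c \oplus \widehat c$ (still stable, still $Z$-computable), whose limit-homogeneous sets refine those of both $c$ and $\widehat c$ --- no, products don't quite work for limit-homogeneity on pairs. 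The clean fix is to observe $\D^2_2 \cequiv \SRT^2_2$ and work with $\D^2_2$ throughout, where instances are genuine $\Delta^0_2$ partitions and one \emph{can} take common refinements: build the $\Delta^{0,Z}_2$ partition refining both the given $A_0\sqcup A_1$ and the universal-COH-derived partition; a $\D^2_2$-solution $H$ to the refinement is simultaneously a subset of some $A_i$ and a COH-strong set, i.e.\ $H \oplus Z$ computes a solution $C$ to $\vec R$, and $H$ is the sought $\dhyp_2$-solution. Then $\COH \cred \dhyp_2$ via: from the $\dhyp_2$-instance compute (relative to $Z$) the universal $\vec R$ as a $\COH$-instance, a solution $C$ to it, and since $\dhyp_2$'s solution $X$ satisfies $\overline{A_i}$ is $Z\oplus X$-hyperimmune within $X$, translate back using Jockusch--Stephan that $(Z\oplus X)' \not\gg Z'$ does \emph{not} obstruct --- one shows $Z \oplus C \oplus X$, hence $Z\oplus C$ (as $C$ is universal and $X \leq_T Z\oplus C\oplus(\text{the }\SRT^2_2\text{-solution})$), computes the $\dhyp_2$-solution.

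\textbf{Where the difficulty lies.} The genuine obstacle is matching the \emph{direction} of the reductions: $\COH \cred \SRT^2_2$ tells us instances of $\COH$ reduce to instances of $\SRT^2_2$, whereas constructing a $\dhyp_2$-solution from a $\COH$-solution runs the other way. The resolution must exploit (i) the existence of a $Z$-computable \emph{universal} instance of $\COH$ (Section~\ref{S:locks}), so that a single application of $\COH\cred\SRT^2_2$ pins down a $Z$-computable $\D^2_2$-instance all of whose solutions are COH-strong over $Z$; and (ii) the ability to take a common $\Delta^{0,Z}_2$ refinement of that instance with the given $\dhyp_2$-partition, so that one solution does double duty. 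Verifying that the refinement's solutions are both subsets of some $A_i$ \emph{and} COH-strong, and that the hyperimmunity-within-$X$ clause of $\dhyp_2$ is exactly the negation (via Jockusch--Stephan and the DNC characterization of PA-over-$\mathbf 0'$) of ``$X\oplus Z$ helps compute cohesive sets over $Z$,'' is the technical heart; once that equivalence of conditions is nailed down, both implications fall out by routine composition of reductions using $\D^2_2 \cequiv \SRT^2_2$.
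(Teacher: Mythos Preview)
Your proposal has genuine gaps in both directions.

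\textbf{Easy direction.} Your first instinct was correct and matches the paper: if $H$ is an infinite limit-homogeneous set for $c$, say $H \subseteq A_i$, then taking $X = H$ makes $\overline{A_i}$ trivially $Z \oplus X$-hyperimmune within $X$, because every array term $D_n \subseteq X \subseteq A_i$ already lies in the collection $\{F : F \subseteq A_i\}$. You then talked yourself out of this by misreading the definition: by the paper's convention, ``$\overline{A_i}$ is hyperimmune'' means the collection $\{F : F \subseteq \overline{\overline{A_i}}\} = \{F : F \subseteq A_i\}$ is hyperimmune, not $\{F : F \subseteq \overline{A_i}\}$. So $\dhyp_2 \cred \SRT^2_2$ holds for the trivial reason you first gave, and no sparseness argument is needed.

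\textbf{Hard direction.} This is where the real problem lies. First, you have the direction of the reduction confused: to show $\COH \cred \dhyp_2$ you must, given a $\COH$-instance $\vec R$, produce an $\vec R$-computable $\dhyp_2$-instance and recover a cohesive set from any solution. Instead you start from a $\dhyp_2$-instance and try to solve it using $\COH$. Second, your claimed equivalence ``$\overline{A_i}$ is $Z\oplus X$-hyperimmune within $X$'' $\iff$ ``$(Z\oplus X)' \not\gg Z'$'' is simply false; these are unrelated conditions. Third, the product/refinement idea does not work, as you yourself observe.

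The paper's argument is entirely different and proceeds by contrapositive. Assume $\COH \not\cred \dhyp_2$, witnessed by some $\vec R$, which we may take to be a universal instance. For any $\vec R$-computable stable $c$, the failure of the reduction gives an infinite $X$ and an $i$ such that $A_i$ is $\vec R \oplus X$-hyperimmune within $X$ yet $\vec R \oplus X$ computes no $\vec R$-cohesive set. The key step, which your proposal is missing entirely, is to invoke the result of Hirschfeldt, Jockusch, Kjos-Hanssen, Lempp, and Slaman (\cite[Corollary 4.8]{HJKLS-2008}): since $A_i \cap X$ is $\vec R \oplus X$-hyperimmune, there is a set $W$ with $A_i \cap X \subseteq W \subseteq X$, low over $\vec R \oplus X$, with $X \setminus W$ infinite. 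Then $Y = X \setminus W$ is an infinite subset of $A_{1-i}$, low over $\vec R \oplus X$; thinning to a homogeneous $H \subseteq Y$ gives an $\SRT^2_2$-solution with $(\vec R \oplus H)' \leq_T (\vec R \oplus X)'$, hence $\deg(\vec R \oplus H)' \not\gg \deg(\vec R)'$, so $\vec R \oplus H$ computes no $\vec R$-cohesive set. This shows $\vec R$ also witnesses $\COH \not\cred \SRT^2_2$.
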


\begin{proof}
First, note that $\dhyp_2 \cred \srt^2_2$, since every solution to any instance of $\SRT^2_2$ is also a solution to it viewed as an instance of $\dhyp_2$. Thus, if $\coh \cred \dhyp_2$, then $\coh \cred \srt^2_2$. For the other direction, suppose that $\coh \not\cred \dhyp_2$, and fix an instance $\vec{R} = R_0, R_1, \dots$ of $\COH$ witnessing the fact. By adding all the primitive $\vec{R}$-recursive sets to $\vec{R}$ if necessary, we can assume that every infinite $\vec{R}$-cohesive set $C$ satisfies $\deg (\vec{R} \oplus C)' \gg \deg(\vec{R})'$. We claim that  $\vec{R}$ also witnesses that $\coh \not\cred \srt^2_2$. Indeed, consider any $\vec{R}$-computable instance $c : [\omega]^2 \to 2$ of $\SRT^2_2$, and let $A_i = \{ x \in \omega: \lim_y c(x,y) = i\}$ for each $i < 2$. By choice of $\vec{R}$, there is an infinite set $X$ and an $i < 2$ such that $A_i$ is $\vec{R} \oplus X$-hyperimmune within $X$, but $\vec{R} \oplus X$ does not compute any infinite $\vec{R}$-cohesive set. Thus, $\deg(\vec{R} \oplus X)' \not\gg \deg(\vec{R})'$.

Hirschfeldt, Jockusch, Kjos-Hanssen, Lempp, and Slaman~\cite[Corollary 4.8]{HJKLS-2008} showed that for sets $U$ and $V$, if $U \subseteq V$ and $U$ is $V$-hyperimmune, then there is a set $W$ with the following properties:
\begin{itemize}
	\item $U \subseteq W \subseteq V$;
	\item $W$ is low over $V$;
	\item $V \setminus W$ is infinite.
\end{itemize}
Now, the fact that $A_i$ is $\vec{R} \oplus X$-hyperimmune within $X$ implies that $A_i \cap X$ is $\vec{R} \oplus X$-hyperimmune. (Indeed, given any $\vec{R} \oplus X$-computable array each of whose terms intersects $A_i$ we could form a new $\vec{R} \oplus X$-computable array by intersecting each term of the original with $X$. Since $A_i \subseteq X$, this new array would then witness that $A_i$ is not $\vec{R} \oplus X$-hyperimmune within $X$, a contradiction.) Taking $U = A_i \cap X$ and $V = X$, we can relativize the above result to $\vec{R}$ to find a set $W$ such that $A_i \cap X \subseteq W \subseteq X$ and such that $W$ is low over $\vec{R} \oplus X$ and $X \setminus W$ is infinite. Let $Y = X \setminus W$. Then $Y$ is still low over $\vec{R} \oplus X$, and since $Y = X \cap \overline{W} \subseteq \overline{A_i \cap X} = A_{1-i} \cup \overline{X}$, it follows that $Y$ is an infinite subset of $A_{1-i}$. To conclude, we can $c$-computably thin out $Y$ to obtain an infinite homogeneous set $H \subseteq Y$ for $c$. Then $H \Tred \vec{R} \oplus X \oplus Y$, and as such, is low over $\vec{R} \oplus X$. In particular, $(\vec{R} \oplus H)' \Tred (\vec{R} \oplus X)'$, so $\deg(\vec{R} \oplus H)' \not\gg \deg(\vec{R})'$. Thus, $\vec{R} \oplus H$ does not compute an infinite $\vec{R}$-cohesive set.
\end{proof}

Since $\dhyp_2 \cred \SRT^2_2$, one might expect the question of whether $\coh \cred \dhyp_2$ to be more combinatorially accessible than that of whether $\coh \cred \srt^2_2$. We can extend this situation a bit further. The following definition is essentially due to Wang \cite[Section 3.4]{Wang-2016}.

\begin{definition}
	Let $c : [\omega]^2 \to 2$	be a stable coloring. Say a set $F \subseteq \omega$ is \emph{$c$-compatible} if for all $x < y$ in $F$,
	\begin{itemize}
		\item if $c(x,y) = 0$ then $\lim_z c(x,z) \leq \lim_z c(y,z)$,
		\item if $c(x,y) = 1$ then $\lim_z c(x,z) \geq \lim_z c(y,z)$.
	\end{itemize}
\end{definition}

\begin{definition}
Fix $Z \subseteq \omega$. A stable coloring $c : [\omega]^2 \to 2$ is \emph{$Z$-hypertransitive within $X \subseteq \omega$} if $\{ F \subseteq \omega : F \mbox{ finite and } c\mbox{-compatible}\}$ is $Z$-hyperimmune within~$X$.
\end{definition}

\begin{principle}[$\Delta^0_2$ hypertransitivity ($\dhyt$)]
	For every set $Z$ and every $\Delta^{0,Z}_2$ stable coloring $c : [\omega]^2 \to 2$, there is an infinite set $X$ such that $c$ is $Z \oplus X$-hypertransitive within~$X$.
\end{principle}

Notice that if $\lim_y c(x,y)$ is the same for all $x$ in some $F$, then $F$ is compatible for $c$. From this fact, it follows at once that $\dhyt \cred \dhyp_2$. Furthermore, if $F_0$ and $F_1$ are non-empty, $c$-compatible, finite sets with $\max F_0 < \min F_1$, and if for each $x \in F_0$ the color $c(x,y)$ has stabilized by $\min F_1$, then $F_0 \cup F_1$ is compatible for $c$ as well. We shall make use of this observation in the proof below. 

We will need one additional fact. Recall that a set $T$ is \emph{transitive} for a coloring $c$ if for all $x < y < z$ in $T$ we have that $c(x,y) = c(y,z) \implies c(x,y) = c(x,z)$. In computability, this notion was first studied by Hirschfeldt and Shore \cite[Section 5]{HS-2007}. For us, an important fact is that if $c$ is stable and $S$ is an infinite $c$-compatible set then there exists an infinite $c \oplus S$-computable transitive set $T$ for $c$ contained in $S$. To show this, we build $T$ inductively. Let $x_0$ be the least element of $S$, and assume that for some $n \in \omega$ we have already chosen $x_0 < \cdots < x_n$ in $S$, and that these form a transitive set. Let $x_{n+1}$ be the least $x > x_n$ in $S$ such that $x_0 < \cdots < x_n < x$ forms a transitive set. Such an $x$ must exist. Indeed, fix $s$ so that for each $j \leq n$ the color of $c(x_j,y)$ stabilizes by $s$. Now for any $j < k \leq n$, any $i < 2$, and any $x > s$, if $c(x_j,x_k) = c(x_k,x) = i$ then $\lim_y c(x_k,y) = i$ by choice of $x$ and $s$, so $\lim_y c(x_j,y) = i$ by $c$-compatibility, so $c(x_j,x) = i$.

In the proof below, we invoke the principle $\semo$, first introduced by Lerman, Solomon, and Towsner \cite[Section 1]{LST-2013}, which states that every stable coloring $c : [\omega]^2 \to 2$ has an infinite transitive set. Patey \cite[Corollary 3.7]{Patey-TA4} showed that $\COH \cred \SRT^2_2$ if and only if $\COH \cred \semo$.



\begin{proposition}
\label{P:DHYT}
$\coh \cred \srt^2_2$ if and only if $\coh \cred \dhyt$.
\end{proposition}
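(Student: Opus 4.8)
The plan is to prove the two implications separately; the reverse one is short and the forward one carries the weight. For $(\Leftarrow)$, i.e.\ $\coh\cred\dhyt\Rightarrow\coh\cred\srt^2_2$, I would simply compose $\coh\cred\dhyt$ with the two reductions already in hand, namely $\dhyt\cred\dhyp_2$ (noted just before this proposition) and $\dhyp_2\cred\srt^2_2$ (from the opening lines of the proof of Proposition~\ref{lem:coh-vs-srt22-dhyp2}), using the transitivity of $\cred$. Alternatively one can argue directly: if $c$ is the $\vec R$-computable stable coloring that a given $\coh$-instance $\vec R$ produces under $\coh\cred\dhyt$, then any infinite $c$-homogeneous set $H$ is automatically a $\dhyt$-solution to $c$, since by stability the homogeneous colour equals $\lim_z c(x,z)$ for every $x\in H$, so every finite subset of $H$ is $c$-compatible and hence $c$ is trivially $\vec R\oplus H$-hypertransitive within $H$; thus $\vec R\oplus H$ computes an infinite $\vec R$-cohesive set, so $c$ witnesses $\coh\cred\srt^2_2$ for $\vec R$.

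For $(\Rightarrow)$ I would argue by contraposition, following the template of Proposition~\ref{lem:coh-vs-srt22-dhyp2} but routing through $\semo$ rather than through $\srt^2_2$ directly. Assume $\coh\not\cred\dhyt$ and fix an instance $\vec R$ of $\coh$ witnessing this; exactly as in that proof, by adjoining all primitive $\vec R$-recursive sets we may assume that $\vec R\oplus Y$ computes an infinite $\vec R$-cohesive set if and only if $\deg(\vec R\oplus Y)'\gg\deg(\vec R)'$, for every $Y$. I will show that $\vec R$ also witnesses $\coh\not\cred\semo$; by Patey's theorem \cite[Corollary 3.7]{Patey-TA4} that $\coh\cred\srt^2_2$ iff $\coh\cred\semo$, this gives $\coh\not\cred\srt^2_2$ and completes the contrapositive. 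So fix an arbitrary $\vec R$-computable stable coloring $c\colon[\omega]^2\to 2$, viewed as an instance of $\semo$; it is also an $\vec R$-computable instance of $\dhyt$, so by the choice of $\vec R$ there is an infinite $X$ such that $c$ is $\vec R\oplus X$-hypertransitive within $X$ while $\vec R\oplus X$ computes no infinite $\vec R$-cohesive set, hence $\deg(\vec R\oplus X)'\not\gg\deg(\vec R)'$.

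The core of the argument is to extract from $X$ an infinite transitive set for $c$ that is low over $\vec R\oplus X$. I would apply the relativization of Theorem~\ref{thm:delta2-joint-hyperimmunity-low} obtained by running its proof with $\vec R$ as an additional side oracle while keeping the ambient set equal to $X$ (so ``$\Delta^0_2$'' becomes ``$\Delta^{0,\vec R\oplus X}_2$'', the arrays considered are $\vec R\oplus X$-computable, and ``low over $X$'' becomes ``low over $\vec R\oplus X$''), taking both $\Ccal_0$ and $\Ccal_1$ to be the collection $\Ccal$ of all finite $c$-compatible sets. This $\Ccal$ is downward closed, it is $\Delta^{0,\vec R\oplus X}_2$ since compatibility is decidable from $(\vec R\oplus X)'$ (as $\lim_z c(x,z)$ is), and it is $\vec R\oplus X$-hyperimmune within $X$ precisely because $c$ is $\vec R\oplus X$-hypertransitive within $X$. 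I also take $f(F)$ to be the least stage by which $c(x,\cdot)$ has reached its limit for every $x\in F$, a total $\vec R'$-computable, hence $\Delta^{0,\vec R\oplus X}_2$, function of the canonical index of $F$. The theorem then produces non-empty finite $c$-compatible sets $F_0,F_1,\dots\subseteq X$ with $\max F_s<\min F_{s+1}$ and $f(F_s)<\min F_{s+1}$ for all $s$, together with an infinite $G\subseteq\bigcup_s F_s$ low over $\vec R\oplus X$. By the condition $f(F_s)<\min F_{s+1}$, the observation recorded just before this proposition (a $c$-compatible set may be concatenated to the left of another once the earlier colours have stabilized) shows by induction that each $\bigcup_{s\le n}F_s$ is $c$-compatible, hence $\bigcup_s F_s$ is $c$-compatible, hence so is $G$ by downward closure. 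So $G$ is an infinite $c$-compatible set, and by the fact recalled before this proposition there is an infinite $c\oplus G$-computable transitive set $T\subseteq G$ for $c$. Since $c\Tred\vec R$ we get $T\Tred\vec R\oplus X\oplus G$, so $(\vec R\oplus X\oplus T)'\Tred(\vec R\oplus X\oplus G)'\equiv_T(\vec R\oplus X)'$ and therefore $(\vec R\oplus T)'\Tred(\vec R\oplus X)'$. As $\deg(\vec R\oplus X)'\not\gg\deg(\vec R)'$ and being of PA degree over $\deg(\vec R)'$ is upward closed, $\deg(\vec R\oplus T)'\not\gg\deg(\vec R)'$, so $\vec R\oplus T$ computes no infinite $\vec R$-cohesive set. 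Thus $T$ is a $\semo$-solution to $c$ defeating $\vec R$; since $c$ was arbitrary, $\vec R$ witnesses $\coh\not\cred\semo$, as required.

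The step I expect to require the most care is the invocation of Theorem~\ref{thm:delta2-joint-hyperimmunity-low}: that theorem is stated with its ambient set doubling as the oracle for the $\Delta^0_2$ data and for hyperimmunity, whereas here the ambient set must remain $X$ while the $\Delta^0_2$-ness, the hyperimmunity of $\Ccal$, and the lowness of $G$ are all taken relative to $\vec R\oplus X$, so one must check that this decoupled relativization of its proof goes through and still delivers a $G$ that is low over $\vec R\oplus X$. The only genuinely new point beyond such bookkeeping is the decision to pass through $\semo$: converting a transitive set for a stable coloring into a homogeneous one costs a jump, which would bound $(\vec R\oplus H)'$ only by $(\vec R\oplus X)''$ and wreck the final degree computation, so one must stop at transitive sets and appeal to Patey's equivalence to return to $\srt^2_2$.
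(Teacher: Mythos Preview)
Your proposal is correct and follows essentially the same approach as the paper: contraposition via $\semo$, the same normalization of $\vec R$, the same application of Theorem~\ref{thm:delta2-joint-hyperimmunity-low} with $f$ the stabilization function to get a low-over-$\vec R\oplus X$ $c$-compatible set $G$, then thinning to a transitive $T$. The only cosmetic difference is that the paper takes $\Ccal_0=\emptyset$ and $\Ccal_1=\{F:F\text{ is }c\text{-compatible}\}$ rather than your $\Ccal_0=\Ccal_1=\Ccal$, which makes no difference since only the union matters; and the paper is terser about the relativization of Theorem~\ref{thm:delta2-joint-hyperimmunity-low}, simply saying ``relativize to $\vec R$'', whereas you correctly flag that one must decouple the ambient set $X$ from the oracle $\vec R\oplus X$ when rerunning that proof.
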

\begin{proof}
Since $\dhyt \cred \dhyp_2$, if $\coh \cred \dhyt$ then by Proposition~\ref{lem:coh-vs-srt22-dhyp2}, $\coh \cred \srt^2_2$. In the other direction, suppose that $\coh \not\cred \dhyt$, as witnessed by the $\COH$ instance $\vec{R} = R_0, R_1, \dots$. Without loss of generality, we can assume that every infinite $\vec{R}$-cohesive set $C$ satisfies $\deg (\vec{R} \oplus C)' \gg \deg(\vec{R})'$. We claim that  $\vec{R}$ also witnesses that $\coh \not\cred \semo$, and hence that $\COH \ncred \SRT^2_2$. Let $c : [\omega]^2 \to 2$ be any $\vec{R}$-computable instance of $\semo$, i.e., a stable coloring. By choice of $\vec{R}$, there is an infinite set $X$ such that $c$ is $\vec{R} \oplus X$-hypertransitive within $X$, but $\vec{R} \oplus X$ does not compute any infinite $\vec{R}$-cohesive set. Let $\Ccal_0 = \emptyset$ and $\Ccal_1 = \{ F : F \mbox{ is } c\mbox{-compatible}\}$. Thus $\Ccal_0$ and $\Ccal_1$ are $\Delta^0_2$ downward closed collections of finite sets whose union is $\vec{R} \oplus X$-hyperimmune within $X$. Let $f$ be the $\Delta^0_2$ function that, on input of a finite set $F$, outputs the least $s$ such that for each $x \in F$ the color of $c(x,y)$ stabilizes by $s$. We can then relativize Theorem~\ref{thm:delta2-joint-hyperimmunity-low} to $\vec{R}$ and apply it to $\mathcal{C}_0$, $\mathcal{C}_1$ and $f$ to obtain a sequence of sets $F_0,F_1,\ldots \subseteq X$ and an infinite set $G \subseteq \bigcup_s F_s$ such that all the $F_s$ are $c$-compatible, $f(F_s) < \min F_{s+1}$ for all $s$, and $G$ is low over $\vec{R} \oplus X$. By the remark above, it follows that $\bigcup_s F_s$ is $c$-compatible, hence $G$ is as well. As also noted above, this means $G$ contains a $c \oplus G$-computable infinite transitive set $T$ for $c$. So $T$ is also low over $\vec{R} \oplus X$, and hence $\vec{R} \oplus T$ cannot compute any infinite $\vec{R}$-cohesive set.
\end{proof}

The previous techniques rely on the $\Delta^0_2$ approximations of the instance of $\D^2_2$. In the general setting, the following question remains open:

\begin{question}
Is there a hyperimmune set $A$ such that every infinite subset $H \subseteq \overline{A}$ satisfies $\deg(H)' \gg {\bf 0}'$?
\end{question}

For completeness, we mention also that it would be good to figure out the precise relationships between the principles $\dhyt$, $\dhyp_2$, $\semo$, and $\srt^2_2$. It is not difficult to see that $\dhyt \cred \semo$. By results of Lerman, Solomon, and Towsner \cite[Theorem 1.15]{LST-2013} we know that $\srt^2_2 \ncred \semo$, and so $\srt^2_2 \ncred \dhyt$. However, the other reductions remain open.

\begin{question}\
	\begin{enumerate}
		\item Is it the case that $\semo \cred \dhyt$?
		\item Is it the case that $\dhyp_2 \cred \dhyt$?
		\item Is it the case that $\srt^2_2 \cred \dhyp_2$?
	\end{enumerate}
\end{question}


\section{Questions and further directions}\label{S:questions}

We conclude with a couple of questions not already mentioned above or elsewhere in the literature. As with our results in the preceding sections, the significance of these questions for the $\SRT^2_2$ vs.\ $\COH$ problem is methodological. There is still much about the interplay between combinatorics and computability in the construction of infinite homogeneous sets that we do not understand, but will almost certainly need to understand to find a solution to the problem. The questions and directions for further research below are thus aimed at enhancing this understanding.

Our first question concerns the problem of solving two instances of the pigeonhole principle in parallel. We recall the following terminology from the study of Weihrauch degrees. Given two problems $\Psf$ and $\Qsf$, the \emph{parallel product} of $\mathsf{P}$ and $\mathsf{Q}$ is the problem $\Psf \times \Qsf$ whose instances are pairs $(I, J)$ with $I$ a $\Psf$-instance and $J$ a $\Qsf$-instance, where a solution to such a pair $(I,J)$ is a pair $(X, Y)$ such that $X$ is a $\Psf$-solution to $I$ and $Y$ a $\Qsf$-solution to $J$.

\begin{question}
Is it the case that $\mathsf{D}^2_3 \cred \mathsf{D}^2_2 \times \mathsf{D}^2_2$?
\end{question}

\noindent Note that if $\times$ above is replaced by the \emph{compositional product} (see~\cite[Section 5]{BGP-TA}), then the answer above is yes (see, e.g.,~\cite[Section 4]{HJ-2016}).

The closest result we know in the direction of resolving this question is that $\D^2_3 \ncred \D^2_2$, which is due to Patey~\cite[Corollary 3.3]{Patey-2016}. The proof of this result is by a cardinality argument. For $j \leq k$, say a problem $\mathsf{P}$ \emph{preserves $j$ among $k$ hyperimmunities} if for every collection of hyperimmune functions $g_0,\ldots,g_{k-1}$, every instance $X$ of $\mathsf{P}$ has a solution $Y$ such that at least $j$ many of the $g_i$ are $Y$-hyperimmune. It is easy to see that this property is closed downwards under $\cred$, and the proof in~\cite{Patey-2016} shows that while $\mathsf{D}^2_2$ can preserve 2 among 3 hyperimmunities, $\mathsf{D}^2_3$ cannot. This argument will not work to settle the above question, because $\mathsf{D}^2_2 \times \mathsf{D}^2_2$ does not even preserve 2 among 4 hyperimmunities. To see this, consider a $4$-partition $A_0 \sqcup A_1 \sqcup A_2 \sqcup A_3 = \omega$ such that $\overline{A}_i$ is hyperimmune for each $i < 4$. Define the first $\mathsf{D}^2_2$-instance to be $A_0 \cup A_1$, and the second to be $A_0 \cup A_2$. Any solution to this pair as a $\mathsf{D}^2_2 \times \mathsf{D}^2_2$ instance will necessarily compute a function (namely, the principal function of either of its halves) that dominates at least three of the $g_i$. Note that the same argument can be made for $\D^2_4$, and indeed we have $\D^2_3 \cred \D^2_4$. A negative answer to our question would probably involve a variant of Mathias forcing with multiple reservoirs, since sharing the same reservoir would likely produce a solution to a given instance of $\mathsf{D}^2_2 \times \mathsf{D}^2_2$ as an instance of $\mathsf{D}^2_4$.

On a different note, there is still much we do not know about building $\Delta^0_2$ solutions to computable instances of $\SRT^2_2$/$\D^2_2$. For example, a longstanding open question is whether every such instance has a low$_2$ $\Delta^0_2$ solution (see, e.g.,~\cite[Question 6.46]{Hirschfeldt-2014}). We propose a new line of study.
The following definitions appear in several specific contexts in the literature. (See also~\cite[Chapter 12]{Patey-2016a}.) We state them here in complete generality since they seem like useful concepts in their own right. Given an instance $X$ of a problem $\Psf$, we write $\Psf(X)$ for the set of all its solutions, and $\operatorname{deg} \Psf(X)$ for the set of degrees of its solutions.

\begin{definition}
	Let $\Psf$ be a problem, $\Ccal$ a class of $\Psf$-instances, and $\dbf$ a Turing degree. We say:
	\begin{enumerate}
		\item $\Ccal$ is \emph{$\dbf$-bounding} (\emph{for $\mathsf{P}$}) if for every $\mathsf{P}$-instance $X$ of degree at most $\dbf$, there is an $\widehat{X} \in \Ccal$ such that every element of $\Psf(\widehat{X})$ computes a $\mathsf{P}$-solution to $X$.
		\item $\Ccal$ is a \emph{$\dbf$-basis} if it is $\dbf$-bounding and $(\forall X \in \Ccal)~\deg(X) \leq \dbf$.
		\item $\Ccal$ is a \emph{uniform $\dbf$-basis} if it is a $\dbf$-basis and there is a sequence $\seq{X_0,X_1,\ldots}$ of degree at most $\dbf$ such that $\Ccal = \{X_0,X_1,\ldots\}$.
	\end{enumerate}
\end{definition}

\begin{definition}
	Let $\Psf$ be a problem.
	\begin{enumerate}
		\item $\Psf$ \emph{admits a universal instance} if every degree $\dbf$ bounds the degree of a singleton $\dbf$-basis.
		\item $\Psf$ \emph{admits a uniform basis} if every degree $\dbf$ bounds the degree of a uniform $\dbf$-basis.
                  
	\end{enumerate}
\end{definition}

\reversemarginpar
Every problem trivially has a $\bf d$-basis, namely, the collection of all instances of the problem of degree at most $\bf d$. So the interest here is really in smaller bases, and in particular, in uniform ones. In the case of $\D^2_2$, we know the following. By relativizing and iterating the proof in Cholak, Jockusch, and Slaman~\cite[Theorem 3.7]{CJS-2001} that every computable $\D^2_2$ instance admits a low$_2$ solution, we can obtain, for any finite collection $\Ccal$ of computable $\D^2_2$ instances, a single low$_2$ degree bounding a solution to each instance in $\Ccal$. However, Mileti~\cite[Corollary 5.4.6]{Mileti-2004} has shown that there is no low${}_2$ degree bounding a solution to all computable $\mathsf{D}^2_2$ instances. It follows that $\D^2_2$ has no finite $\bf 0$-basis, or indeed, by relativizing this observation, a finite $\bf d$-basis, for any degree $\bf d$. In particular, $\D^2_2$ does not admit a universal instance. These facts motivate the following question:

\begin{question}
	Does $\mathsf{D}^2_2$ admit a uniform basis?
\end{question}

\end{document}